\newtheorem{theorem}{Theorem}[section]
\newtheorem{lemma}[theorem]{Lemma}
\newtheorem{conjecture}[theorem]{Conjecture}
\newtheorem{example}[theorem]{Example}
\newtheorem{question}{Question}
\newtheorem{corollary}[theorem]{Corollary}
\newtheorem{proposition}[theorem]{Proposition}
\theoremstyle{remark}
\newtheorem{remark}[theorem]{Remark}
\newcommand{\RE}{\mathrm{Re}\,}
\newcommand{\IM}{\mathrm{Im}\,}
\DeclareSymbolFont{cyrletters}{OT2}{wncyr}{m}{n}
\DeclareMathSymbol{\Sha}{\mathalpha}{cyrletters}{"58}
\author[Bickel]{Kelly Bickel$^\dagger$}
\address{Department of Mathematics, Bucknell University, 360 Olin Science Building, Lewisburg, PA 17837, USA.}
\email{kelly.bickel@bucknell.edu}
\thanks{$\dagger$ Research supported in part by National Science Foundation
DMS grant \#2000088.}
 \keywords{finite Blaschke products, compressions of shifts, Crouzeix's conjecture, numerical ranges}
  \subjclass[2020]{Primary 47A12, 30J10; Secondary 47A25, 47B32}
\author[Gorkin]{Pamela Gorkin}
\address{Department of Mathematics, Bucknell University, 360 Olin Science Building, Lewisburg, PA 17837, USA.}
\email{pgorkin@bucknell.edu}
\begin{document}
\raggedbottom
\title{Blaschke Products, Level Sets, and Crouzeix's Conjecture}
\date{\today}

\maketitle
\begin{abstract}
We study several problems motivated by Crouzeix's conjecture, which we consider in the special setting of model spaces and compressions of the shift with finite Blaschke products as symbols. We pose a version of the conjecture in this setting, called the level set Crouzeix (LSC) conjecture, and establish structural and uniqueness properties for (open) level sets of finite Blaschke products that allow us to prove the LSC conjecture in several cases. In particular, we use the geometry of the numerical range to prove the LSC conjecture for compressions of the shift corresponding to unicritical Blaschke products of degree $4$.
\end{abstract}
\section{Introduction} Let $A$ be an $n\times n$ matrix and let $W(A)$ denote its numerical range
\[ W(A) =\left \{ \left \langle A x, x \right \rangle: x\in \mathbb{C}^n, \| x\|=1\right \},\]
 an important subset in the plane that both contains the spectrum of $A$ and encodes additional properties of $A$. A famous open problem 
 related to numerical ranges is Crouzeix's conjecture from  \cite{Cr07}, which states
 \begin{conjecture}[Crouzeix's Conjecture] If $p$ is a polynomial, then $\displaystyle \| p(A) \| \le 2 \max_{z\in W(A)} |p(z)|.$
 \end{conjecture}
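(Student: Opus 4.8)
The plan is to attack the full inequality $\|p(A)\|\le 2\max_{z\in W(A)}|p(z)|$ through the Cauchy-transform method of Crouzeix and Palencia, using the level-set structure developed above to supply the geometric input needed to push their constant $1+\sqrt 2$ down to the conjectured $2$. By homogeneity I would first normalize so that $\max_{z\in W(A)}|p(z)|=1$, reducing the goal to $\|p(A)\|\le 2$; a routine perturbation and continuity argument then lets me assume that $A$ has distinct eigenvalues and that $W(A)$ is a smooth, strictly convex compact set, so that I may fix a convex domain $\Omega\supset W(A)$ with real-analytic boundary $\Gamma=\partial\Omega$ on which $\|p\|_\Gamma$ is as close to $1$ as desired. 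It is worth recording at the outset that the constant $2$ is attained and so cannot be lowered, which is what pins down the precise form of the target.

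Next I would introduce the companion function. Let $g$ be the conjugate-Cauchy completion of $p$, defined for $z\in\Omega$ by the Cauchy transform
\[
g(z)=\frac{1}{2\pi i}\int_{\Gamma}\frac{\overline{p(s)}}{s-z}\,ds,
\]
so that $g$ is holomorphic on $\Omega$ while, on $\Gamma$, the boundary values of $p+g$ couple $p$ to the conjugate data $\overline p$. The central Crouzeix--Palencia lemma — that a suitable operator assembled from $p(A)$, $g(A)$, and the harmonic-measure projection is positive semidefinite — yields the two inequalities $\|p(A)\|\le\|(p+g)(A)\|$ and $\|(p+g)(A)\|\le\|p+g\|_\Gamma$. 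Combining these with the triangle inequality and the classical Cauchy-transform estimate $\|g\|_\Gamma\le\sqrt 2\,\|p\|_\Gamma$ reproduces exactly the known bound $\|p(A)\|\le 1+\sqrt 2$. The entire content of the conjecture is therefore concentrated in the single sharper sup-norm estimate $\|p+g\|_\Gamma\le 2$ whenever $\|p\|_\Gamma\le 1$.

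Securing this last estimate is the main obstacle, and it is exactly where the level-set analysis must enter. On $\Gamma$ the boundary values of $p+g$ pair $p$ against its conjugate data through a singular Cauchy/Hilbert-transform operator $\mathcal H$ determined by the curve: the resulting real part is automatically bounded by $2$, and the $\sqrt 2$ loss in the triangle inequality is precisely the failure to detect cancellation between this real part and the residual imaginary term $\mathcal H p$. The plan is to take $\Gamma=\partial W(A)$ itself and, in the model-space setting $A=S_\Theta$ (the compression of the shift to $K_\Theta$) central to this paper, to exploit that $\partial W(S_\Theta)$ is a level set $\{|\Theta|=r\}$ of the finite Blaschke product $\Theta$; the uniqueness and structural results for such level sets give $\mathcal H p$ an explicit Blaschke parametrization along the boundary, which I would use to show that $\mathcal H p$ is genuinely subordinate to $\RE p$ on $\Gamma$, forcing $|p+g|\le 2$ pointwise. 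I fully expect this pointwise cancellation to be the fundamental difficulty: the passage from $1+\sqrt 2$ to $2$ is the open core of Crouzeix's conjecture, and every reduction above serves only to localize that core into a quantitative statement about a single level set of $\Theta$ — accessible for the well-understood curves treated earlier, even though the fully general boundary curve remains beyond reach.
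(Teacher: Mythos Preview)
The statement you are attempting to prove is Crouzeix's conjecture, which the paper records as an \emph{open conjecture}; the paper does not prove it and makes no claim to. All of the paper's results concern the much weaker Level Set Crouzeix inequality and special cases thereof. So there is no ``paper's own proof'' to compare against, and any purported full proof would resolve a well-known open problem.

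Your proposal is not a proof but an outline that, by your own admission, stops exactly at the known obstruction: you correctly reproduce the Crouzeix--Palencia route to $1+\sqrt{2}$ and then assert that the level-set machinery of this paper will supply the missing cancellation $\|p+g\|_\Gamma\le 2$. There are two concrete gaps. First, the key geometric input you invoke is false: the boundary $\partial W(S_\Theta)$ is \emph{not} in general a level set $\{|\Theta|=r\}$ of the Blaschke product $\Theta$. Already for $\deg\Theta=2$ with distinct zeros, $W(S_\Theta)$ is a genuine ellipse while the level sets of $\Theta$ are Cassini-type ovals; these coincide only in degenerate (circular) cases. The paper never claims any such identification, and its level-set results (Theorem~\ref{thm:B1}, Lemma~\ref{lem:ll}) concern containment and component counts, not a parametrization of $\partial W(S_\Theta)$. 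Second, even granting some usable boundary description, you give no mechanism for the claimed subordination of $\mathcal H p$ to $\RE p$; you simply state that you ``would use'' the Blaschke parametrization to force $|p+g|\le 2$ pointwise, and then immediately concede that this ``pointwise cancellation'' is ``the open core of Crouzeix's conjecture'' and that the general curve ``remains beyond reach.'' That is an accurate assessment of the state of the art, but it means the proposal contains no new idea beyond the Crouzeix--Palencia argument it summarizes.
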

Numerical evidence for the conjecture can be found in \cite{GO} and applications thereof appear in \cite{CG}. In \cite{CP17}, Crouzeix and Palencia showed that the conjecture is true if $2$ is replaced by $1+ \sqrt{2}$. Both Crouzeix and a variety of other researchers have established Crouzeix's conjecture in a number of special cases (see \cite{BCD, CH13, Crouzeix1, C13, GKL, GreenbaumChoi} and the survey paper \cite{Badea}), but the problem remains open even for $3\times3$ matrices. 

Two cases, \cite{GreenbaumChoi} and \cite{C13}, motivated the study here. In the first of these papers, the authors studied perturbed Jordan blocks, or $(n+1) \times (n+1)$ matrices of the form
\begin{equation}
J_{n + 1, a} := \begin{pmatrix} 0 & 1 & \\ & \ddots & \ddots \\ & & \ddots & 1\\ a & & & 0 \end{pmatrix},\label{perturbed}\end{equation} with $a \in \mathbb{C}$. For $|a| < 1$, one can check that these matrices are contractions for which all eigenvalues lie inside the open unit disk $\mathbb{D}$ and the $\mbox{rank}~(I - A^\ast A) = \mbox{rank}~( I - A A^\ast) = 1$. Such matrices, which include Jordan blocks, represent operators called compressions of the shift operator. Following this line of study, we were naturally led to a certain class of nilpotent operators. The study of such $3 \times 3$ matrices was the subject of the work in \cite{C13}. In this paper, we consider various situations in which compressed shift operators satisfy Crouzeix's conjecture. We turn now to a discussion of such operators. 

Let $\Theta$ denote a degree-$n$ finite Blaschke product, i.e. 
\[ \Theta(z) =\lambda \prod_{j=1}^n \frac{z-a_j}{1-\bar{a}_j z}, \quad \text{ for } a_1, \dots, a_n \in \mathbb{D} \text{ and } \lambda \in \mathbb{T}.\]
Finite Blaschke products comprise a special class of inner functions, that is, functions that are bounded and holomorphic on the unit disk $\mathbb{D}$ with radial boundary values on the unit circle $\mathbb{T}$ that have modulus one a.e. For $\Theta$ a finite Blaschke product, let $H^2=H^2(\mathbb{D})$ denote the standard Hardy space on the unit disk $\mathbb{D}$ and let $K_\Theta = H^2 \ominus \Theta H^2$ denote the model space associated to $\Theta$. If $M_z$ denotes multiplication by the independent variable $z$, then we can define the associated compression of the shift $S_\Theta$ by 
\[ S_\Theta = P_\Theta M_z |_{K_\Theta},\]
where $P_{\Theta}$ is the orthogonal projection from $H^2$ onto $K_\Theta$. Since $\deg B = n$, the space $K_\Theta$ has dimension $n$ and so, we can interpret $S_\Theta$ as an $n\times n$ matrix by writing down its representation with respect to an orthonormal basis of $K_\Theta$. These operators $S_\Theta$ are particularly important because Sz.-Nagy and Foias \cite{SFBK10} showed that every completely non-unitary $n\times n$ contraction of class $C_0$ with defect index $1$ is unitarily equivalent to $S_\Theta$ for some finite Blaschke product $\Theta$.  

In this paper, we are interested in Crouzeix's conjecture for such compressions of the shift. This is a natural class to study for two reasons. First, it includes several classes of matrices, for example, Jordan blocks and perturbed Jordan blocks, for which Crouzeix's conjecture is known; these correspond to the finite Blaschke products $\Theta = z^n$ and $\Theta = \frac{z^n -a}{1-\bar{a}z^n}$. Second, the numerical ranges of compressions of the shift satisfy particularly nice geometric properties including one called the Poncelet property, see for example \cite{GauWu, M98}.

In this paper, we propose a conjecture related to Crouzeix's conjecture that is specific to the behavior of compressions of the shift and finite Blaschke products. To state it, observe that in \cite{Sar67}, Sarason proved that if $f$ is bounded and holomorphic on $\mathbb{D},$ then 
\[ f(S_\Theta) = P_{\Theta} M_f |_{K_{\Theta}},\]
where $M_f$ is multiplication by $f$. 
Furthermore, his work appears to imply that if $B$ is a finite Blaschke product with $\deg B < \deg \Theta$, then $\| B(S_\Theta) \| =1$. Garcia and Ross provide more details and a direct statement of this result in Corollary $4$ in \cite[p. 512]{GarciaRoss}).  Combining these facts with Crouzeix's conjecture yields the following new complex analysis conjecture:

\begin{conjecture}[Level set Crouzeix conjecture] Let $\Theta, B$ be finite Blaschke products with $\deg B < \deg \Theta$. Then 
\begin{equation} \label{eqn:LSC}  \max \left \{ |B(z)|: z\in W(S_\Theta)\right \} \ge \tfrac{1}{2}. \end{equation} 
\end{conjecture}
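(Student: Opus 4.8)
The plan is to prove the contrapositive while exploiting the two structural features that make this setting special: the \emph{exact} value $\| B(S_\Theta)\| = 1$ and the Poncelet geometry of $W(S_\Theta)$. First I would record that, since $B$ is holomorphic on a neighborhood of the compact convex set $W(S_\Theta)\subset\mathbb{D}$, the general holomorphic form of Crouzeix's bound applies to the pair $(B,S_\Theta)$, and by the cited results of Sarason and of Garcia--Ross the left-hand operator norm is exactly $1$ because $\deg B<\deg\Theta$. Thus \eqref{eqn:LSC} is precisely the assertion that the sharp Crouzeix constant $2$ holds for this pair. Negating it, I would assume $|B(z)|<\tfrac12$ for every $z\in W(S_\Theta)$, i.e.\ that $W(S_\Theta)$ is contained in the open sublevel set $\Omega:=\{z\in\mathbb{D}:|B(z)|<\tfrac12\}$, and aim to derive a contradiction with the degree inequality $\deg B<\deg\Theta$.

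Next I would analyze $\Omega$ using the structural and uniqueness properties of (open) level sets of finite Blaschke products established earlier. For $c\in(0,1)$ the set $\{|B|=c\}$ is a disjoint union of finitely many analytic Jordan curves, so $\Omega$ is a union of at most $\deg B$ simply connected components whose shapes are governed by the zeros and critical points of $B$. Against this I would play the Poncelet property of $W(S_\Theta)$: for each $\lambda\in\mathbb{T}$ the $n=\deg\Theta$ solutions on $\mathbb{T}$ of $\Theta(z)=\lambda$ are the vertices of an inscribed $n$-gon that circumscribes $\partial W(S_\Theta)$. This pins $W(S_\Theta)$ tightly against $\mathbb{T}$ from $n$ independent directions and should force $W(S_\Theta)$ to be large relative to any sublevel set of a Blaschke product of degree strictly below $n$.

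The decisive step I would attempt is a counting/argument-principle contradiction. Fixing a generic value $w$ with $|w|<\tfrac12$, the number of preimages of $w$ under $B$ inside a Jordan domain equals the winding of $B$ along its boundary; if $\partial W(S_\Theta)$ bounded a region inside $\Omega$ while $W(S_\Theta)$ genuinely interacted with all $n$ Poncelet contact directions, I would try to produce more than $\deg B$ solutions of $B(z)=w$ accounted for by the circumscribing $n$-gons, contradicting $\deg B<n$. An alternative, more operator-theoretic route is to use the extremal vector $x$ attaining $\|B(S_\Theta)x\|=\|x\|$, together with the $C_0$ functional model, to locate a point of $\partial W(S_\Theta)$ where $|B|$ is forced up to $\tfrac12$.

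The hard part, and the reason the statement is stated only as a conjecture, is obtaining the \emph{sharp} constant $2$ rather than a larger admissible one. The Crouzeix--Palencia estimate yields only $\max_{z\in W(S_\Theta)}|B(z)|\ge (1+\sqrt{2})^{-1}=\sqrt{2}-1<\tfrac12$, so in full generality the LSC conjecture is exactly as hard as the sharp case of Crouzeix's conjecture itself. My realistic expectation is that the level-set structural results reduce the problem to finitely many geometric configurations, or to low $\deg B$, which can then be settled using the explicit description of $W(S_\Theta)$; the genuine obstacle is controlling every degree of $\Theta$ and every placement of the zeros of $B$ simultaneously, which the geometric approach appears to handle only case by case.
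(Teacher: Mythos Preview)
The statement you are attempting is stated in the paper as a \emph{conjecture}, not a theorem; the paper does not prove it in general. Instead, the paper establishes \eqref{eqn:LSC} only in special cases (e.g.\ unicritical $B$, $\deg B=2$ with $\deg\Theta\ge 6$, $\Omega^B_{1/2}$ disconnected, unicritical $\Theta$ of degree $\le 4$), each by a bespoke argument exploiting pseudohyperbolic-disk inclusions, Fuss's formula, the Gau--Wu containment lemma, or the curve $C_t\subset W(A_t)$. So there is no ``paper's proof'' to match, and your final paragraph already identifies the correct status: without the sharp Crouzeix constant, the general statement is open.

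That said, the concrete mechanism you sketch in the ``decisive step'' does not work. The Poncelet $n$-gons have their vertices on $\mathbb{T}$, where $|B|=1$; they tell you nothing directly about the number of solutions of $B(z)=w$ with $|w|<\tfrac12$, all of which lie in $\mathbb{D}$. There is no way to manufacture more than $\deg B$ preimages of a regular value $w$ from the circumscription property, because $B$ has exactly $\deg B$ such preimages globally in $\mathbb{D}$, independent of any relationship to $W(S_\Theta)$. The winding of $B$ along $\partial W(S_\Theta)$ counts zeros of $B-w$ \emph{inside} $W(S_\Theta)$, which is at most $\deg B$ and yields no contradiction. Likewise, the ``extremal vector'' route recovers only the Crouzeix--Palencia bound $\sqrt{2}-1$ you already noted, not $\tfrac12$. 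The paper's actual progress comes from a different idea you do not use: a level-set \emph{uniqueness} theorem (if $\Omega_r^B\subseteq\Omega_r^C$ with $\deg C\le\deg B$ then $B=\lambda C$), which converts containment of $W(S_\Theta)$ in $\Omega^B_{1/2}$ into rigidity statements that can be contradicted in the listed special cases.
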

This is a conjecture about classical holomorphic functions on $\mathbb{D}$ and proposes a non-obvious relationship between the level sets of finite Blaschke products and the numerical ranges of compressions of the shift. Indeed, for $B$ a finite Blaschke product and $r \in (0,1)$, define the open level set
\[ \Omega^B_{r} =\left \{ z\in \mathbb{C}: |B(z)| < \tfrac{1}{2} \right\}.\]
Then if $\Theta$ is a finite Blaschke product with $\deg \Theta > \deg B$, the level set Crouzeix conjecture (LSC conjecture) asserts that $W(S_\Theta)$ cannot be contained in the level set $\Omega^B_{1/2}$. In what follows, if \eqref{eqn:LSC} holds for a particular pair $(B, \Theta)$, we will say that $(B, \Theta)$ satisfies the level set Crouzeix inequality (LSC inequality). 

Level sets of inner functions have been studied in other contexts. For example, recall that an inner function $\Theta$ is a one-component inner function if there exists an $\varepsilon > 0$ such that $\Omega_\varepsilon^\Theta$ is connected. These were introduced by B. Cohn \cite{Cohn} and in the interim have been heavily studied, see \cite{baranov, CM, CM2, Nicolau}. Cohn introduced this class because he was able to characterize the Carleson measures for the model spaces $H^2 \ominus \Theta H^2$, under the assumption that $\Theta$ was a one-component inner function. One can check that all finite Blaschke products are one-component inner functions. 

In this paper, we establish results about the individual level sets of finite Blaschke products, even in the setting where they have more than one component.  

\subsection{Outline and Main Results} This paper handles three interconnected topics; it establishes structural and uniqueness properties for level sets of finite Blaschke products, proves the LSC conjecture in a number of special cases, and provides an in-depth study of the  LSC conjecture and related topics in the setting of unicritical $B$ or $\Theta$. First, because investigating the LSC conjecture requires detailed knowledge of level set behavior, Section \ref{sec:LS} establishes several results about the structure of level sets of finite Blaschke products. A key result is this statement about uniqueness:

\begin{theorem} \label{thm:B1}  Let $B$ and $C$ be finite Blaschke products with $\deg B = \deg C$. If there is some $r \in (0,1)$ with $\Omega_r^B \subseteq \Omega_r^C$, then there exists $\lambda \in \mathbb{T}$ such that  $B = \lambda C$.

\end{theorem}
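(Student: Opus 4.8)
The plan is to reduce the containment of level sets to a statement about the boundary and then exploit the rigidity of finite Blaschke products. First I would observe that both level sets $\Omega_r^B$ and $\Omega_r^C$ are open, bounded (since $|B|,|C| \ge r$ off a compact set, as each is a ratio of polynomials of equal degree tending to a unimodular constant at $\infty$ — actually $|B(z)| \to \infty$ only fails; more carefully $|B| = r$ defines a real-algebraic curve and $\Omega_r^B$ is the bounded region containing the zeros of $B$), and that $B$ restricted to $\Omega_r^B$ is a proper holomorphic map onto the disk $r\mathbb{D}$ of degree $n = \deg B$, since every preimage of a point in $r\mathbb{D}$ lies in $\Omega_r^B$ and $B$ has $n$ zeros there. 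The boundary $\partial \Omega_r^B$ is mapped into the circle $r\mathbb{T}$.

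The key step is then: on $\Omega_r^B$ the function $g = B/C$ is holomorphic provided $C$ has no zeros there — but $C$'s zeros lie in $\Omega_r^C \supseteq \Omega_r^B$? No, the containment goes the wrong way for that. Instead I would work on $\Omega_r^C$: since $\Omega_r^B \subseteq \Omega_r^C$, on the open set $\Omega_r^C$ consider $h = C/B$. The zeros of $B$ all lie in $\Omega_r^B \subseteq \Omega_r^C$, so $h$ has poles in $\Omega_r^C$ unless they are cancelled. Here is the cleaner route: define $\varphi = B/C$, a rational function of degree at most $n$. On $\partial\Omega_r^B$ we have $|B| = r$. Since $\Omega_r^B \subseteq \Omega_r^C$ and $\partial\Omega_r^B \subseteq \overline{\Omega_r^C}$, on $\partial\Omega_r^B$ we get $|C| \le r$, hence $|\varphi| = |B|/|C| \ge 1$ on $\partial\Omega_r^B$. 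Now I must handle the poles of $\varphi$ inside $\Omega_r^B$: these are the zeros of $C$. But a zero $a_j$ of $C$ lies in $\Omega_r^C$ (where $|C|<r$); it need not lie in $\Omega_r^B$. So $\varphi$ may actually be holomorphic and zero-free on $\Omega_r^B$: its zeros are the zeros of $B$, all in $\Omega_r^B$, while its poles (zeros of $C$) might all be outside. That would force $1/\varphi$ to be holomorphic on $\Omega_r^B$ with $|1/\varphi| \le 1$ on the boundary and with zeros inside (the zeros of $B$), giving $|1/\varphi| < 1$ inside by the maximum principle — but then counting: $1/\varphi = C/B$ has $n$ zeros and at most the zeros of $B$ as poles; as a proper self-map considerations give a degree count. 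I would instead argue via the argument principle that the number of zeros of $B$ inside $\Omega_r^B$ equals $n$ equals the number of zeros of $C$ inside $\Omega_r^B$, forcing all zeros of $C$ into $\Omega_r^B$; then $C/B$ is a rational function of degree $n$ mapping $\Omega_r^B$ properly to some region, unimodular-modulus-comparison on the boundary plus equality of degrees forces $C/B$ constant.

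Concretely, the steps in order: (1) show $\Omega_r^B$ is a bounded open set, $B|_{\Omega_r^B}: \Omega_r^B \to r\mathbb{D}$ is a proper holomorphic branched cover of degree $n$, and $\partial\Omega_r^B \subseteq \{|B| = r\}$; (2) from $\Omega_r^B \subseteq \Omega_r^C$ deduce $|C| \le r$ on $\overline{\Omega_r^B}$, hence $C|_{\Omega_r^B}$ maps into $\overline{r\mathbb{D}}$; (3) apply the argument principle to $C$ around $\partial\Omega_r^B$: since $|C|\le r$ on the boundary and $C$ omits no value of $r\mathbb{D}$ near it, count the zeros of $C$ in $\Omega_r^B$ — combined with the fact that $\deg C = n = \deg B$ and all $n$ zeros of $C$ lie in $\Omega_r^C$, conclude all $n$ zeros of $C$ lie in $\Omega_r^B$; (4) now both $B$ and $C$ are proper degree-$n$ maps of $\Omega_r^B$ into $r\mathbb{D}$ (for $C$, properness onto its image plus the boundary estimate), so $F := C/B$ is holomorphic and nonvanishing on $\Omega_r^B$ with $|F| \le 1$ on $\partial\Omega_r^B$ and, by symmetry of the hypothesis on the boundary where $|B|=|C|=r$ on the "true" boundary pieces, $|F| = 1$ there; (5) conclude $F$ is a unimodular constant $\mu = \bar\lambda$ by the maximum principle applied to $F$ and to $1/F$, giving $C = \mu B$, i.e. $B = \lambda C$ with $\lambda = \bar\mu \in \mathbb{T}$.

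The main obstacle will be Step (3)–(4): justifying that $\partial\Omega_r^B$ is "generic enough" (a finite union of smooth arcs, no degenerate components, so the argument principle applies cleanly) and, crucially, nailing down that $|C| = r$ — not merely $|C|\le r$ — on $\partial\Omega_r^B$ so that $F$ has unimodular boundary values rather than just $|F|\le 1$. If one only gets $|F|\le 1$ on $\partial\Omega_r^B$ with strict inequality possible on an arc, then $|F|<1$ throughout $\Omega_r^B$, which together with the degree/zero count of Step (3) would be contradictory (a nonvanishing holomorphic function on a domain cannot be a proper map), and chasing that contradiction to force $\Omega_r^B = \Omega_r^C$ and $|C| = r$ on the common boundary is the delicate part; I expect the proof in the paper to route this through the structural level-set results of Section \ref{sec:LS} rather than a bare maximum-principle argument.
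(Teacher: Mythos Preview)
Your approach has two genuine gaps that I do not see how to close along the lines you sketch.

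First, Step~(4) is false as written: $F = C/B$ is \emph{not} holomorphic on $\Omega_r^B$, because all $n$ zeros of $B$ lie in $\Omega_r^B$ and produce poles of $F$ there. The zeros of $C$ (even if you had shown they all lie in $\Omega_r^B$, which Step~(3) does not establish) need not coincide with those of $B$, so nothing cancels. The maximum principle therefore does not apply to $F$ or to $1/F$, and Step~(5) collapses.

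Second, the obstacle you yourself flag is fatal and not merely ``delicate.'' From $\Omega_r^B \subseteq \Omega_r^C$ you get only $|C| \le r$ on $\partial\Omega_r^B$, never $|C| = r$. The argument principle in Step~(3) needs a strict inequality (or at least control of the image of $\partial\Omega_r^B$ under $C$) to read off a winding number; $|C|\le r$ alone says nothing about how many times $C(\partial\Omega_r^B)$ winds around~$0$. Your proposed contradiction---``a nonvanishing holomorphic function cannot be a proper map''---does not fire, since $F$ is neither holomorphic nor nonvanishing. Forcing $\partial\Omega_r^B \subseteq \{|C|=r\}$ directly is essentially the full theorem.

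The paper resolves exactly this $\le$-versus-$<$ issue by a perturbation: it compares $C$ not to $B$ but to $(1+\tfrac{1}{m})B$, so that on $\partial\Omega_r^B$ one has the \emph{strict} inequality $|C|\le r < (1+\tfrac{1}{m})|B|$, and Rouch\'e's theorem gives $C-(1+\tfrac{1}{m})B$ at least $n$ zeros in $\Omega_r^B$. A limiting argument ($m\to\infty$) then yields $n$ zeros of $C-B$ in $\overline{\Omega_r^B}\subset\mathbb{D}$, counted with multiplicity, and a generalized Horwitz--Rubel uniqueness theorem (two monic degree-$n$ Blaschke products agreeing at $n$ points of $\mathbb{D}$, with multiplicity, are equal) finishes.
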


This theorem appears later as Theorem \ref{thm:LSB}. It is related to level set investigations for inner functions conducted by Berman and Stephenson and Sundberg \cite{B84, Stephenson2}.  While Berman studied inner functions that share level sets for possibly different values, Stephenson and Sundberg studied inner functions whose level sets corresponding to the same value have boundaries with a subarc in common. Both results imply that if two inner functions share a common level set for some value $r$ in $(0,1)$, then the inner functions must agree up to a unimodular constant.  Theorem \ref{thm:B1} displays a similar relationship but  it restricts to finite Blaschke products of the same degree and only requires set containment, not equality.  The proof rests on Theorem \ref{HRrevised}, which generalizes a result of Horwitz and Rubel characterizing when two monic Blaschke products are equal. As the proof of  Theorem \ref{HRrevised} uses many of original arguments of Horwitz and Rubel, we postpone its proof  to Section \ref{sec:RH}.

Section \ref{sec:LSC} investigates the LSC inequality for different classes of $(B,\Theta)$ using a variety of tools. The employed techniques often involve the analysis of pseudohyperbolic disks, denoted $D_\rho(z_0,r)$, with given (pseudohyperbolic) centers $z_0 \in \mathbb{D}$ and (pseudohyperbolic) radii $r$, with $0 < r < 1$, that is,
 \[ D_\rho(z_0,r) = \left \{ z\in \mathbb{D}: \left | \frac{z-z_0}{1-\bar{z}_0 z} \right | <r \right\}.\]
The four main cases we handle are encoded in the following theorem:
\begin{theorem}\label{thm:LSC} Let $\Theta, B$ be finite Blaschke products with $\deg B < \deg \Theta$. Then the LSC inequality \eqref{eqn:LSC} holds for $(B, \Theta)$ in all of the following cases:
\begin{itemize}
\item[i.] $\deg\Theta =n$ for which there is a pseudohyperbolic disk $D_\rho(z_0, (\tfrac{1}{2})^{1/(n-1)}) \subseteq W(S_\Theta)$.
\item[ii.] $\deg B =2$ and $\deg \Theta \ge 6$.
\item[iii.] $B(0)=0$, $|B'(0)| \ge \tfrac{2\sqrt{2}}{3}$, $\deg \Theta \ge 9$, and $0 \in W(S_\Theta)$.
\item[iv.] $\deg B =2$ and $\Omega^B_{1/2}$ has two components.
\end{itemize}
\end{theorem}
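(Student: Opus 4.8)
The plan is to establish (i) as the main engine and then deduce (ii)--(iv) from it (or from its proof) together with geometric information about $W(S_\Theta)$. For (i): since $W(S_\Theta)$ is compact, the hypothesis $D_\rho(z_0,r)\subseteq W(S_\Theta)$ with $r:=(\tfrac12)^{1/(n-1)}$ forces $\overline{D_\rho(z_0,r)}\subseteq W(S_\Theta)$. Let $\psi$ be the disk automorphism with $\psi(z_0)=0$, so $\psi$ carries $\overline{D_\rho(z_0,r)}$ onto $\overline{r\mathbb D}$, and put $C:=B\circ\psi^{-1}$, a finite Blaschke product with $\deg C=\deg B=:d\le n-1$. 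Then $B=C\circ\psi$ gives $\max\{|B(z)|:z\in W(S_\Theta)\}\ge\max_{|w|\le r}|C(w)|=\max_{|w|=r}|C(w)|$ by the maximum modulus principle. Dividing $C$ by the Blaschke product of the disk $r\mathbb D$ built from those zeros of $C$ lying in $r\mathbb D$ (each factor $\tfrac{r^2-\bar b w}{r(w-b)}$ has modulus $1$ on $|w|=r$) --- equivalently, applying Jensen's formula on the circle $|w|=r$ --- yields $\max_{|w|=r}|C(w)|\ge r^d$. Since $0<r<1$ and $d\le n-1$, $r^d\ge r^{\,n-1}=\tfrac12$, which proves (i). The same argument in fact gives the sharper statement: \emph{if $W(S_\Theta)$ contains a closed pseudohyperbolic disk of pseudohyperbolic radius at least $(\tfrac12)^{1/\deg B}$, then $(B,\Theta)$ satisfies the LSC inequality}; this is the form I would feed into (ii) and (iii).

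For (ii), $\deg B=2$, so by the sharper statement it suffices to show that $W(S_\Theta)$ contains a pseudohyperbolic disk of pseudohyperbolic radius at least $(\tfrac12)^{1/2}=2^{-1/2}$ whenever $\deg\Theta\ge 6$. Here I would use the Poncelet/numerical-range structure of compressions of the shift: for $\Theta$ of degree $n$, $W(S_\Theta)$ always contains a pseudohyperbolic disk whose radius grows with $n$ (for $\Theta=z^n$ this disk is $\cos(\tfrac{\pi}{n+1})\mathbb D$, and $\cos(\tfrac{\pi}{7})>2^{-1/2}$), and one transfers the estimate to a general $\Theta$ using the Möbius behaviour of these numerical ranges together with the fact that the eigenvalues of $S_\Theta$ (the zeros of $\Theta$) always lie in the convex set $W(S_\Theta)$. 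The threshold $\deg\Theta\ge 6$ should be exactly what forces pseudohyperbolic radius $\ge 2^{-1/2}$ in the worst case; quantifying this inscribed-disk bound for \emph{all} $\Theta$ of degree $\ge 6$ is the technical heart of this part.

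For (iii), since $B(0)=0$, write $\widetilde B:=B/z$, a holomorphic map of $\mathbb D$ into $\overline{\mathbb D}$ with $|\widetilde B(0)|=|B'(0)|=:c\ge\tfrac{2\sqrt2}{3}$. Schwarz--Pick applied to $\widetilde B$ gives, for $|z|=t<c$,
\[
|B(z)|=t\,|\widetilde B(z)|\ \ge\ \frac{t\,(c-t)}{1-ct}.
\]
Evaluating at $t=2^{-1/2}$ (note $2^{-1/2}<c$) and simplifying, the right-hand side is $\ge\tfrac12$ precisely when $c\ge\tfrac{2\sqrt2}{3}$; thus $|B(z)|\ge\tfrac12$ for every $z$ with $|z|=2^{-1/2}$. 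Hence it suffices that $W(S_\Theta)$ contains the Euclidean disk of radius $2^{-1/2}$ centered at $0$, and I would deduce this from $0\in W(S_\Theta)$ together with $\deg\Theta\ge 9$ via an inscribed-disk estimate for $W(S_\Theta)$ about an interior point (for $\Theta=z^9$ the numerical range is $\cos(\tfrac{\pi}{10})\mathbb D\supseteq 2^{-1/2}\mathbb D$; the estimate improves with the degree, and $9$ is the threshold).

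For (iv), assume the LSC inequality fails, so $W(S_\Theta)\subseteq\Omega^B_{1/2}$. Since $W(S_\Theta)$ is connected and $\Omega^B_{1/2}$ has exactly two components, $W(S_\Theta)$ lies in one of them, say $\Omega_1$. Because the level set is disconnected, the unique critical point of the degree-$2$ Blaschke product $B$ has critical value of modulus at least $\tfrac12$, so $B|_{\Omega_1}$ is a conformal bijection onto $\tfrac12\mathbb D$ and $\Omega_1$ contains exactly one zero of $B$. I would then obtain a contradiction by showing such a component is too small to contain $W(S_\Theta)$ once $\deg\Theta\ge 3$: using the conformal map $(B|_{\Omega_1})^{-1}\colon\tfrac12\mathbb D\to\mathbb D$ and its boundary behaviour, each component of $\Omega^B_{1/2}$ sits inside a pseudohyperbolic disk of radius below the guaranteed pseudohyperbolic size of $W(S_\Theta)$ in this degree range, while $W(S_\Theta)$ is a convex set containing the (at least three) zeros of $\Theta$. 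I expect the main obstacle throughout (ii)--(iv) to be exactly this quantitative control of the size of $W(S_\Theta)$ --- that it must contain the prescribed disk, or cannot be squeezed into a single level-set component --- whereas (i) and the complex-analytic estimates above are elementary.
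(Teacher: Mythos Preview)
Your proof of (i) is correct and is genuinely simpler than the paper's. The paper deduces (i) from a Horwitz--Rubel--type uniqueness theorem (their Theorem~\ref{thm:B1}/Corollary~\ref{cor:LSB}): if $\sup\{|B|:D_\rho(z_0,r)\}<r^m$ then $\Omega^{C}_{r^m}\subseteq\Omega^{B}_{r^m}$ for the unicritical $C$ of degree $m$, forcing $B=\lambda C$. Your Jensen/Schwarz argument bypasses this machinery entirely, and your ``sharper statement'' (only a disk of pseudohyperbolic radius $(\tfrac12)^{1/\deg B}$ is needed) is exactly the content of their Theorem~\ref{thm:main}.

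However, your plan for (ii) has a real gap. You want to show that every $\Theta$ with $\deg\Theta\ge 6$ has $W(S_\Theta)$ containing a pseudohyperbolic disk of radius $2^{-1/2}$, but this is not established anywhere and is in fact the subject of an open question in the paper even for \emph{unicritical} $\Theta$ (their Section~\ref{sec:examples} gives degree-$2$ counterexamples and leaves the general question open). The paper's route is entirely different: it never finds a disk in $W(S_\Theta)$. Instead it covers $\Omega^B_{1/2}\subseteq D_\rho(a_1,2^{-1/2})\cup D_\rho(a_2,2^{-1/2})$, uses Fuss' formula to realize each $\overline{D_\rho(a_j,r_j)}$ (with $r_j\ge 2^{-1/2}$) as $W(S_{\Psi_j})$ for some degree-$3$ $\Psi_j$, so $\Omega^B_{1/2}\subseteq W(S_{\Psi_1\Psi_2})$ with $\deg(\Psi_1\Psi_2)=6$, and then invokes the Gau--Wu lemma to conclude $W(S_\Theta)\not\subseteq W(S_{\Psi_1\Psi_2})$.

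For (iii), your Schwarz--Pick computation is essentially the paper's, but your final reduction is too strong: you ask that $W(S_\Theta)\supseteq \overline{D(0,2^{-1/2})}$, which does not follow from $0\in W(S_\Theta)$ and $\deg\Theta\ge 9$. What \emph{does} follow (and what the paper uses) is that $W(S_\Theta)\not\subseteq W(S_{z^9})=\overline{D(0,\cos(\pi/10))}$ by Gau--Wu, so $W(S_\Theta)$ contains a point of modulus $>\cos(\pi/10)>2^{-1/2}$; together with $0\in W(S_\Theta)$ and convexity this gives a point on the circle $|z|=2^{-1/2}$, which is all you need. For (iv), your setup is right, but your proposed contradiction (``the component is too small to contain $W(S_\Theta)$'') again presumes size control on $W(S_\Theta)$ that is not available. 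The paper instead proves that on the component $\Omega^1$ containing $a_1$ one has $\rho(z,a_1)<\rho(z,a_2)$, hence $|B(z)|\ge \rho(z,a_1)^2=|C(z)|$ there for the unicritical $C$ with zero $a_1$; then the already-proved unicritical case (their Corollary~\ref{cor:uniB}) gives a point of $W(S_\Theta)$ with $|C|\ge\tfrac12$, hence $|B|\ge\tfrac12$.
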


These results appear later as   Corollary \ref{cor:main}, Theorem \ref{thm:26}, Theorem \ref{thm:hoffman}, and Theorem \ref{thm:degree2level}, respectively. The proofs employ a variety of tools including the uniqueness result Theorem \ref{thm:B1}, Fuss's formula for disks inscribed in quadrilaterals that are circumscribed by the unit circle $\mathbb{T}$ (see \cite{Hess}), geometric properties of the numerical ranges $W(S_\Theta)$, and various relationships between Blaschke product level sets and related pseudohyperbolic disks.

The final theme of the paper concerns a deep study of the unicritical case. Here, a finite Blaschke product $\phi$ is called unicritical if there is a $z_0 \in \mathbb{D}$, $n \ge 1$, and $\lambda \in \mathbb{T}$ such that 
\[ \phi(z) = \lambda \left( \frac{z-z_0}{1-\bar{z}_0 z}\right)^n.\]
In Section \ref{sec:uniB}, we let $B$ be unicritical and prove the following result.

\begin{theorem}  Let $\Theta$ and $B$ be finite Blaschke products with $\deg B <\deg \Theta$ and  $B$ unicritical. Then the LSC inequality \eqref{eqn:LSC} holds for $(B, \Theta)$.  \end{theorem}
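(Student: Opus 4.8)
The plan is to combine two facts already recorded in the introduction: for unicritical $B$ the level set $\Omega^B_{1/2}$ degenerates to a single pseudohyperbolic disk, and $\|B(S_\Theta)\|=1$ whenever $\deg B<\deg\Theta$. Write $m=\deg\Theta$ and $B=\lambda\left(\frac{z-z_0}{1-\bar z_0 z}\right)^n$ with $n=\deg B<m$, and set $b(z)=\frac{z-z_0}{1-\bar z_0 z}$, so that $|B|=|b|^n$. Then
\[\Omega^B_{1/2}=\{z:|b(z)|^n<\tfrac12\}=\{z:|b(z)|<2^{-1/n}\}=D_\rho\!\left(z_0,2^{-1/n}\right),\]
a single open pseudohyperbolic disk, hence an open Euclidean disk whose closure is contained in $\mathbb D$. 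Consequently the LSC inequality \eqref{eqn:LSC} for $(B,\Theta)$ fails precisely when $W(S_\Theta)\subseteq D_\rho(z_0,2^{-1/n})$, so I will assume this containment and derive a contradiction.

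Assume $W(S_\Theta)\subseteq D_\rho(z_0,2^{-1/n})$. Since $W(S_\Theta)$ is compact and the disk is open, $\tilde r:=\max_{z\in W(S_\Theta)}|b(z)|$ satisfies $\tilde r<2^{-1/n}$, and $W(S_\Theta)\subseteq\Delta:=\overline{D_\rho(z_0,\tilde r)}$, a closed Euclidean disk with $\overline\Delta\subset\mathbb D$; on $\Delta$ one has $\max_{z\in\Delta}|B(z)|=\tilde r^{\,n}$. Now I apply Crouzeix's inequality in the case where the numerical range is contained in a disk — a known instance of the conjecture, valid with the constant $2$. (When $z_0=0$ this is elementary: $\|B(S_\Theta)\|=\|S_\Theta^{\,n}\|\le 2\,w(S_\Theta^{\,n})\le 2\,w(S_\Theta)^n$ by the numerical radius inequality and Berger's power inequality, where $w(\cdot)$ denotes the numerical radius; for general $z_0$ one invokes the disk case itself, reducing if desired to the form ``$W(A)$ equals a disk'' by fattening $W(S_\Theta)$ up to $\Delta$, i.e.\ replacing $S_\Theta$ by $S_\Theta\oplus N$ with $N$ normal and $\sigma(N)=\overline\Delta$.) This yields
\[\|B(S_\Theta)\|\le 2\max_{z\in\Delta}|B(z)|=2\,\tilde r^{\,n}<2\cdot\left(2^{-1/n}\right)^n=1.\]
On the other hand $\deg B=n<m=\deg\Theta$, so $\|B(S_\Theta)\|=1$ by the Sarason--Garcia--Ross identity recalled above \cite{Sar67,GarciaRoss}. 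This contradiction completes the proof.

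The crux — and the only place where a substantial external result enters — is the disk case of Crouzeix's conjecture with constant exactly $2$, and this strength is genuinely needed: the inequality \eqref{eqn:LSC} is already an equality for $\Theta(z)=z^m$ and $B(z)=z^{m-1}$ when $m\in\{2,3\}$, since there $W(S_{z^m})$ is the closed disk of radius $\cos\frac{\pi}{m+1}$ and $\cos^{m-1}\frac{\pi}{m+1}=\tfrac12$; thus the general Crouzeix--Palencia constant $1+\sqrt2$ is too lossy to be substituted. A more self-contained alternative would be to prove directly that the pseudohyperbolic circumradius of $W(S_\Theta)$ is at least $2^{-1/(m-1)}$ for every degree-$m$ finite Blaschke product $\Theta$ (equivalently, $W(S_\Theta)$ is never contained in an open pseudohyperbolic disk of that radius), using the Poncelet geometry of $W(S_\Theta)$; since $2^{-1/n}\le 2^{-1/(m-1)}$ for $n\le m-1$ this would again suffice, but it looks more delicate than the functional-calculus argument above. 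I therefore expect the main obstacle to be the clean invocation (and precise referencing) of the disk case of Crouzeix's conjecture; the rest is bookkeeping with pseudohyperbolic disks.
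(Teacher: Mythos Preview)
Your argument is correct and follows essentially the same route as the paper: identify $\Omega^B_{1/2}$ as a pseudohyperbolic (hence Euclidean) disk, use compactness of $W(S_\Theta)$ to pass to a strictly smaller disk, invoke the disk case of Crouzeix's conjecture with constant $2$, and derive $\|B(S_\Theta)\|<1$ in contradiction with the Sarason--Garcia--Ross norm identity. The paper packages the functional-calculus step as a slightly more general Theorem~\ref{thm:basic} (for any square matrix $A$ with $\|B(A)\|=k<2$) and then specializes to $A=S_\Theta$, $k=1$; your normal-dilation aside is unnecessary since the cited disk result already applies when $W(A)$ is merely contained in the disk.
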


This appears later as  Corollary \ref{cor:uniB}. It follows from properties of finite Blaschke products and classical results about spectral set properties of disks.

In Section \ref{sec:uniTheta}, we let $\Theta$ be unicritical. The statement of \eqref{eqn:LSC} suggests that we need to study the numerical range of the associated compression of the shift $S_\Theta$.  For unicritical $\Theta$,  these numerical ranges were studied by Gaaya in \cite{Gaaya10, Gaaya12}, Gau and Wu in \cite{GauWu13, GauWu14}, and in work of Partington and the second author \cite{GorkinPartington}. When $\deg \Theta =3$, results about $W(S_\Theta)$ are encoded in Crouzeix's work \cite{C13}; indeed, his arguments imply that the full Crouzeix conjecture holds for $S_\Theta$ when $\deg \Theta=3$ and $\Theta$ is unicritical.

We contribute to this area by identifying a natural curve $\mathcal{C}$ that lies inside $W(S_\Theta)$, see Proposition \ref{prop:Ct}. Using this curve, we are able to identify large pseudohyperbolic disks that lie inside of $W(S_\Theta)$ for $\Theta$ with small degree:

\begin{theorem} \label{thm:pshds} Let $\Theta$ be unicritical with $\deg \Theta =n$. Then:
\begin{itemize}
\item[i.] If $n \ge 3$, $W(S_{\Theta})$ always contains a pseudohyperbolic disk of radius $\tfrac{1}{2^{1/2}}$. 
\item[ii.]  If $n \ge 4$, $W(S_{\Theta})$ always contains a pseudohyperbolic disk of radius $\tfrac{1}{2^{1/3}}$.
\end{itemize}
\end{theorem}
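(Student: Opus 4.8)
The plan is to reduce to a one‑parameter family of symbols, use the curve from Proposition \ref{prop:Ct} together with the convexity of $W(S_\Theta)$ to inscribe a pseudohyperbolic disk, and then pass to arbitrary $n$ by a compression argument. Since $K_\Theta$ and $S_\Theta$ depend only on the subspace $\Theta H^2$, the unimodular factor of a unicritical $\Theta$ is irrelevant, so we may take $\Theta=\left(\frac{z-z_0}{1-\bar z_0 z}\right)^n$; conjugating $S_\Theta$ by the unitary $f(z)\mapsto f(\eta z)$, $\eta\in\mathbb{T}$, rotates $W(S_\Theta)$ and replaces $z_0$ by $\eta z_0$, and since ``containing a pseudohyperbolic disk of a given radius'' is automorphism‑invariant we may assume $z_0=a\in[0,1)$ and write $\phi_a(z)=\frac{z-a}{1-\bar a z}$, so $\Theta=\phi_a^n$. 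Then $S_\Theta$ is represented by a real matrix in the Takenaka--Malmquist basis of $K_\Theta$, hence $W(S_\Theta)$ is symmetric about $\mathbb{R}$. The case $a=0$ is immediate: $S_{z^n}$ is the nilpotent Jordan block and $W(S_{z^n})$ is the Euclidean (hence pseudohyperbolic) disk centered at $0$ of radius $\cos\frac{\pi}{n+1}$, and $\cos\frac{\pi}{n+1}\ge\cos\frac{\pi}{4}=2^{-1/2}$ for all $n\ge 3$ while $\cos\frac{\pi}{n+1}\ge\cos\frac{\pi}{5}>2^{-1/3}$ for all $n\ge 4$.

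For $a\in(0,1)$, let $\mathcal{C}\subseteq W(S_\Theta)$ be the curve from Proposition \ref{prop:Ct}, which after the normalization above is symmetric about $\mathbb{R}$. Since $W(S_\Theta)$ is convex, $\operatorname{conv}(\mathcal{C})\subseteq W(S_\Theta)$, and in particular the bounded component of $\mathbb{C}\setminus\mathcal{C}$ (taking the union with the reflection of $\mathcal{C}$ across $\mathbb{R}$ if $\mathcal{C}$ is only an arc) lies in $W(S_\Theta)$. I would then choose a point $w_\ast$ in the interval $W(S_\Theta)\cap\mathbb{R}$, depending on $a$ and $n$, and show that $\mathcal{C}$ avoids the pseudohyperbolic disk $D_\rho(w_\ast,r)$, i.e.\ $\left|\frac{\mathcal{C}(t)-w_\ast}{1-w_\ast\,\mathcal{C}(t)}\right|\ge r$ for every parameter $t$, where $r=2^{-1/2}$ when $n=3$ and $r=2^{-1/3}$ when $n=4$. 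As $D_\rho(w_\ast,r)$ is a Euclidean disk which is connected, misses $\mathcal{C}$, and contains the point $w_\ast$ enclosed by $\mathcal{C}$, it lies entirely in the region bounded by $\mathcal{C}$, hence in $\operatorname{conv}(\mathcal{C})\subseteq W(S_\Theta)$. Using the explicit formula for $\mathcal{C}$ from Proposition \ref{prop:Ct}, the displayed inequality reduces to a one‑variable minimization over the curve parameter $t$ which one bounds below in terms of $a$ after optimizing $w_\ast$; this estimate is the technical core. (For $n=3$ one can alternatively extract the disk from Crouzeix's explicit description of $W(S_\Theta)$ for unicritical cubics in \cite{C13}.)

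Finally, for $n>3$ (respectively $n>4$) there is no need to repeat this estimate: since $\phi_a^m$ divides $\phi_a^n$ for $m\le n$, the model space $K_{\phi_a^m}\subseteq K_{\phi_a^n}$ is invariant under $S_{\phi_a^n}^{\ast}$ (model spaces are backward‑shift invariant), and the compression of $S_{\phi_a^n}$ to $K_{\phi_a^m}$ equals $S_{\phi_a^m}$; hence $W(S_{\phi_a^m})\subseteq W(S_{\phi_a^n})$. Applying this with $m=3$, respectively $m=4$, propagates the pseudohyperbolic disk to all $n\ge 3$, respectively all $n\ge 4$, which together with the $a=0$ cases completes the proof.

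The main obstacle is the avoidance estimate in the second paragraph. A pseudohyperbolic disk is not Euclidean‑concentric with its center, so the optimal $w_\ast$ is not simply the eigenvalue $a$, and one must control how $\mathcal{C}$ bulges relative to the Euclidean disk $D_\rho(w_\ast,r)$ uniformly in $a\in(0,1)$. The delicate point is that at $n=3$ the inequality is already saturated in the limit $a\to 0$, so there is no slack to spare: the choice of $w_\ast$ and the ensuing estimate must be arranged so that the guaranteed pseudohyperbolic radius never drops below $2^{-1/2}$ (respectively $2^{-1/3}$) as $a$ ranges over $(0,1)$.
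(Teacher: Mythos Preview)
Your overall architecture is exactly the paper's: reduce to a real zero $t\in[0,1)$, reduce to the minimal $n$ ($n=3$, respectively $n=4$) via the nesting $W(S_{\phi_t^m})\subseteq W(S_{\phi_t^n})$, and then use the curve $\mathcal{C}=C_t$ of Proposition~\ref{prop:Ct} together with convexity to inscribe a disk. The gap is precisely where you say it is: you never carry out the avoidance estimate, and the way you set it up --- directly bounding the pseudohyperbolic distance $\bigl|\tfrac{\mathcal{C}(s)-w_\ast}{1-w_\ast\,\mathcal{C}(s)}\bigr|$ for an unspecified $w_\ast$ --- would be awkward to execute.

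The paper avoids this by a two--step computation that keeps each piece elementary. First it works in the \emph{nilpotent} frame: with $M_t=tI+(1-t^2)A_t$, the curve $C_t\subseteq W(A_t)$ is parameterized by $f(s)=\tfrac{1}{\sqrt{2}}e^{is}-\tfrac{t}{4}e^{2is}$ for $n=3$, and one checks in one line that $|f(s)+\tfrac{t}{4}|^2=\tfrac{1}{2}+\tfrac{t^2}{4}\sin^2 s\ge\tfrac{1}{2}$; combined with the real--axis symmetry and the values $f(0),f(\pi)$, this shows the \emph{Euclidean} disk $D_{1/\sqrt{2}}(-\tfrac{t}{4})$ lies in $\operatorname{conv}(C_t)\subseteq W(A_t)$. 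Pushing forward by $z\mapsto t+(1-t^2)z$ gives the Euclidean disk with center $c(t)=t-(1-t^2)\tfrac{t}{4}$ and radius $R(t)=\tfrac{1-t^2}{\sqrt{2}}$ inside $W(S_{\Theta_t})$. Only then is the pseudohyperbolic radius $r(t)$ of this Euclidean disk computed via the conversion formula \eqref{eqn:r}; one obtains an explicit algebraic expression in $t$, notes that $r(t)=\tfrac{1}{\sqrt{2}}$ only at $t=0,1$, checks a single interior value, and concludes $r(t)\ge\tfrac{1}{\sqrt{2}}$ on $[0,1)$. The $n=4$ case follows the same pattern with Euclidean center $-\tfrac{t}{\sqrt{5}}$ in $W(A_t)$ and a slightly longer (but still elementary) verification that a function $g(t,w)\ge0$ on $[0,1]\times[-1,1]$. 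The point is that separating ``find a Euclidean disk in the affine $A_t$ picture'' from ``compute its pseudohyperbolic radius'' is what makes each step a clean calculation; your direct pseudohyperbolic formulation collapses these and leaves you with the hard joint optimization you flag at the end.
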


This is encoded in Theorem \ref{prop: phd} and follows from some technical estimates showing that certain disks must be contained in the convex hull of the associated $\mathcal{C}$ curves. We conjecture that similar results hold for larger degree $\Theta$ but even when $\deg \Theta =5$, the computations become much more complicated, see Remark \ref{rem:n5}. Then Theorem \ref{thm:pshds} paired with Theorem \ref{thm:LSC}(i) shows that the LSC inequality holds for all $(B,\Theta)$ with $\Theta$ unicritical and $\deg B < \deg \Theta \le 4.$  The $n=3$ case also follows from Crouzeix's work \cite{C13}, but the $n=4$ case appears to be new. 

Section \ref{sec:uniTheta} also investigates the full Crouzeix conjecture for low degree unicritical $\Theta$, which we often denote by $\Theta_t$ to indicate the case when the repeated zero occurs at some $t\in [0,1)$. Basically, we use the associated curves $\mathcal{C}$ in $W(S_{\Theta_t})$ to show that in the $n=3,4,5$ cases, $W(S_{\Theta_t})$ is a $\| X_t \| \cdot \|X_t^{-1}\|$-spectral set for matrices $X_t$ given in \eqref{eqn:Xt3}, \eqref{eqn:Xt4}, and \eqref{eqn:Xt5} respectively. This means that for all polynomials $p$,
\[ \| p(S_\Theta) \|  \le \| X_t \| \cdot \|X_t^{-1}\| \max_{z\in W(S_\Theta)} |p(z)|.\]
We can often estimate this constant. In the $n=3$ case, one can check that for $t \in [0,1)$,
 \[ 2 \le \|X_t\| \cdot \|X_t^{-1}\| = \tfrac{1}{2} \sqrt{12 + t^2 + \sqrt{16 + 24 t^2 + t^4}} \le \frac{ \sqrt{13 + \sqrt{41}}}{2} \le 2.203.\]
 Thus when $n=3$, our arguments do not provide a proof of Crouzeix's conjecture, though they do give a simple proof of a similar inequality with a slightly worse constant. Meanwhile, in the $n=4,5$ cases, there is no simple formula for $\| X_t \| \cdot \|X_t^{-1}\|.$ However, it can be easily estimated via mathematical software such as Mathematica. These estimations reveal that 
 \begin{itemize}
 \item If $n=4$ and $t \in (0, 0.42)$, $\| X_t \| \cdot \|X_t^{-1}\| \le 2$ so Crouzeix's conjecture holds for $S_{\Theta_t}$. 
 \item  If $n=5$ and  $t \in (0.0001, 0.5)$,  $\| X_t \| \cdot \|X_t^{-1}\| \le 2$ so Crouzeix's conjecture holds for $S_{\Theta_t}$. 
\end{itemize}

These investigations motivate several questions, particularly connected to when pseudohyperbolic disks of certain sizes are contained in numerical ranges of compressions of the shift. Those questions and some accompanying examples appear in Section \ref{sec:examples}.

\section{Level Sets of Finite Blaschke Products} \label{sec:LS}

In this section, we catalog several elegant facts about the structure of level sets of finite Blaschke products. For $r \in (0,1)$ and a finite Blaschke product $B$, define its open level set
\begin{equation} \label{eqn:omegar} \Omega^B_r := \{ z \in \mathbb{C}: |B(z)| <r\} \end{equation} 
and observe that its boundary $\partial \Omega^B_r$ is exactly 
\begin{equation} \label{eqn:Sr} S^B_r := \{ z \in \mathbb{C}: |B(z)| = r\}.\end{equation}
The following lemma characterizes the number of components of $\Omega^B_r$.  The proof follows the same line of argument as the proof of Proposition 2.1 in \cite{Eben2011}.

\begin{lemma} \label{lem:ll} Let $B$ be a finite Blaschke product with $\deg B =m$, let $r \in (0,1)$, and assume $\Omega^B_r$ contains $k$ critical points of $B$. Then $\Omega^B_r$ has $m-k$ components. 
\end{lemma}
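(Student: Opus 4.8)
The plan is to analyze $\Omega^B_r$ as a preimage under $B$, viewed as a proper holomorphic (branched covering) map, and to count components via the Riemann–Hurwitz formula / an Euler characteristic argument, exactly in the spirit of the referenced Proposition 2.1 in \cite{Eben2011}. First I would observe that, since $B$ is a finite Blaschke product of degree $m$, it extends to a degree-$m$ proper holomorphic self-map of the Riemann sphere $\widehat{\mathbb{C}}$ (or, restricting appropriately, a proper map onto a disk), with exactly $m-1$ critical points in $\mathbb{D}$ counted with multiplicity. For a \emph{generic} value $r \in (0,1)$ — one for which the circle $\{|w| = r\}$ avoids the (finitely many) critical values of $B$ inside $\mathbb{D}$ — the set $S^B_r = B^{-1}(\{|w|=r\})$ is a disjoint union of smooth closed analytic curves, and $\Omega^B_r = B^{-1}(\{|w|<r\})$ is a bounded open set whose closure $\overline{\Omega^B_r}$ is a compact surface-with-boundary on which $B$ restricts to a proper branched cover of degree $m$ onto the closed disk $r\overline{\mathbb{D}}$, branched exactly over the $k$ critical points of $B$ lying in $\Omega^B_r$ (again counted with multiplicity).

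Next I would apply the Riemann–Hurwitz formula to this branched cover. Writing $\overline{\Omega^B_r}$ as a union of its connected components $X_1, \dots, X_N$, each $X_j$ maps properly onto $r\overline{\mathbb{D}}$ with some degree $d_j \ge 1$ and some number $k_j \ge 0$ of branch points (with multiplicity), where $\sum_j d_j = m$ and $\sum_j k_j = k$. Since $r\overline{\mathbb{D}}$ is a closed disk with Euler characteristic $1$, Riemann–Hurwitz gives $\chi(X_j) = d_j \cdot 1 - k_j$. Now each $X_j$ is a compact connected surface-with-boundary that embeds in the plane, hence is planar, so $\chi(X_j) = 2 - 2\cdot 0 - b_j = 2 - b_j$ where $b_j \ge 1$ is its number of boundary curves; in particular $\chi(X_j) \le 1$. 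The key point is that $X_j$, being a connected subsurface of $\mathbb{C}$ bounded by the level curves $S^B_r$, and mapping onto a disk with the preimage of the outer boundary circle being a single "outermost" curve on each component — one can argue $b_j = 1$, i.e. each component is in fact a topological disk, because the outer boundary $\{|w| = r\}$ of the target has connected preimage within $X_j$ (any boundary curve of $X_j$ maps onto $\{|w|=r\}$, and a planar domain cannot have two boundary circles both surjecting onto the outer boundary of a disk target without an interior boundary circle mapping inside, which is impossible as $B$ has no poles in $\mathbb{D}$). Granting $\chi(X_j) = 1$ for each $j$, Riemann–Hurwitz becomes $1 = d_j - k_j$, i.e. $k_j = d_j - 1$. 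Summing over $j$: $k = \sum_j (d_j - 1) = m - N$, so $N = m - k$, which is the claim for generic $r$.

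Finally I would remove the genericity assumption on $r$. The function $r \mapsto (\text{number of components of } \Omega^B_r)$ and the function $r \mapsto (\text{number of critical points of } B \text{ in } \Omega^B_r)$ are both locally constant away from the critical values, and the claimed identity $N(r) = m - k(r)$ is preserved as $r$ crosses a critical value: when $r$ increases past $|B(c)|$ for a critical point $c$ of multiplicity $\mu$ (so $B$ has a zero of order $\mu+1$ for $B - B(c)$ there), either $c$ enters an existing component without changing $N$ while $k$ jumps by $\mu$ — no, this would break the count — so in fact two or more components merge: the correct bookkeeping is that as $c$ is absorbed, the number of components drops by exactly $\mu$ while $k$ increases by exactly $\mu$, keeping $m - k$ constant, and this can be verified by a purely local normal-form analysis of $B$ near $c$ (locally $w \mapsto w^{\mu+1}$). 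One should also note the boundary behavior $\Omega^B_r \subset \mathbb{D}$ and that $S^B_r$ is compact in $\mathbb{D}$, so $\Omega^B_r$ has compact closure in $\mathbb{D}$, legitimizing the compact-surface arguments.

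The main obstacle I anticipate is the topological step: rigorously justifying that every connected component $X_j$ of $\overline{\Omega^B_r}$ is a closed topological disk (equivalently $\chi(X_j) = 1$, equivalently $X_j$ is simply connected), rather than a higher-genus-zero surface with several boundary circles. This requires carefully using that $B$ has no poles in $\mathbb{D}$ — so $\Omega^B_r$ cannot "surround a pole" and thus no component can have an inner hole whose boundary would have to map to $\{|w| = r\}$ from the outside — combined with the maximum principle applied to $|B|$ on each component. I expect this is exactly where \cite{Eben2011} Proposition 2.1 does the real work, and I would follow that argument closely; the Riemann–Hurwitz computation and the handling of non-generic $r$ are then comparatively routine.
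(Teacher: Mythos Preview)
Your approach is essentially the same as the paper's: restrict $B$ to each component as a proper $d_j$-to-$1$ map onto the disk of radius $r$, apply Riemann--Hurwitz to get $1 = d_j - k_j$ on each component, and sum. Two points where the paper is cleaner than your sketch. First, the ``main obstacle'' you flag --- that each component $\Omega_r^j$ is simply connected --- is in fact immediate from the maximum principle and not something to worry about: if $\Omega_r^j$ had a hole $U$, then $|B|=r$ on $\partial U$ and $B$ is holomorphic on $\overline{U}\subset\mathbb{D}$, so $|B|\le r$ on $U$; but $U$ lies in the complement of $\Omega^B_r$, so $|B|\ge r$ there, forcing $|B|\equiv r$ and hence $B$ constant. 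The paper simply asserts simple connectivity and moves on. Second, for non-generic $r$ the paper does not attempt your local merging analysis (where your own bookkeeping visibly wobbles); instead it takes a decreasing sequence $\epsilon_n\to 0$ with each $\Omega^B_{r-\epsilon_n}$ generic and containing the same $k$ critical points as $\Omega^B_r$, numbers the $m-k$ components consistently so they nest, and observes that their unions are exactly the components of $\Omega^B_r$. This monotone approximation from below is both simpler and more robust than tracking what happens as $r$ crosses a critical level.
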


\begin{proof} Let $\zeta_1, \dots, \zeta_{m-1}$ denote the critical points of $B$ in $\mathbb{D}$. First, assume that $r \ne |B(\zeta_i)|$ for any $i$. 
If we set $F(x,y) = |B(x+iy)|^2-r^2$, then our assumptions about $r$ imply $\frac{\partial F}{\partial x}$, $\frac{\partial F}{\partial y}$ cannot
simultaneously vanish at any points of the boundary $\partial \Omega^B_r = S^B_r$ and an application of the implicit function theorem says $S^B_r$ is a union of smooth, simple closed curves.

Assume $\Omega^B_r$ contains $k$ critical points of $B$ and has $J$ components denoted by $\Omega_r^1, \dots, \Omega_r^J$. We will show that $J =m-k$. For each $j$, let $k_j$ denote the number of critical points of $B$ in $\Omega_r^j$ and $d_j$ denote the number of zeros of $B$ in $\Omega_r^j$, both counted according to multiplicity. Then
\[ \sum_{j=1}^J k_j = k  \ \ \text{ and }  \ \    \sum_{j=1}^J d_j = m.\]
Define $f_j = B|_{\Omega^j_r}$. Then $f_j:  \Omega^j_r \rightarrow D_r(0)$ is a $d_j$-to-$1$ proper analytic map. To see this, fix $w_0\in D_r(0)$ and define $g(z) = w_0$ for all $z$ and $h = f_j$. As the boundary of $\Omega_r^j$ is a smooth, simple closed curve and 
\[ |g(z)| < |h(z)| =r  \ \text{ on } \partial \Omega^j_r,\]
Rouche's theorem implies that $h-g$ or equivalently $f_j -w_0$ has exactly $d_j$ zeros in $\Omega^j_r$. 

Furthermore, as both $\Omega^j_r$ and $D_r(0)$ are simply connected, the Riemann-Hurwitz theorem in this setting (see for example \cite{N93}) implies that 
\[ -1 = -d_j + k_j. \]
Adding this equation over the $J$ components of $\Omega^B_r$ gives $-J = -m +k$, which is equivalent to the desired formula: $J = m-k$.

Now if $r = |B(\zeta_i)|$ for some $i$, we can still establish the conclusion of the lemma. Specifically, assume that $\Omega^B_r$ contains $k$ critical points of $B$. Then, there is some $\epsilon_0>0$ such that for all $0< \epsilon <\epsilon_0$, $\Omega^B_{r-\epsilon}$ contains exactly the same critical points of $B$ as $\Omega^B_r$. By the previous case, each $\Omega^B_{r-\epsilon}$ 
has $J:= m-k$ components for all $0< \epsilon <\epsilon_0$. Let $(\epsilon_n)$ be a decreasing sequence with each $0< \epsilon_n < \epsilon_0$ and $(\epsilon_n) \rightarrow 0$. For each $n$, number the components $\Omega^1_{r-\epsilon_n}, \dots, \Omega^J_{r-\epsilon_n}$ so that if $n_1<n_2$, then each $\Omega^j_{r-\epsilon_{n_1}} \subseteq \Omega^j_{r-\epsilon_{n_2}}.$ Define
\[U_r^{j} := \bigcup_{n=1}^{\infty} \Omega_{r-\epsilon_n}^j \text{ for each } j=1, \dots, J.\] 
Then by their nested property, one can show that each $U_r^j$ is open and connected, while if $j\ne j'$, then $U_r^{j} \cap \overline{U_r^{j'}}=\emptyset.$ Since 
\[ \Omega^B_r = \bigcup_{j=1}^J U_r^{j},\]
these $U_r^j$ must be exactly the components of $\Omega^B_r$. Thus, $\Omega^B_r$ has exactly $m-k$ components. 
\end{proof}

The following simple observation will be used without further comment in later proofs:

\begin{remark} \label{rem:zeros} Let $r \in(0,1)$ and $B$ be a finite Blaschke product.  Then each component of the level set $\Omega^B_r$ must contain at least one zero of $B$. Indeed, since $|B| \equiv r$ on the boundary $\partial \Omega^B_r$, this conclusion follows immediately from the minimum modulus principle.\end{remark}

For the next level set results, we require the following theorem. This generalizes a well known result of Horwitz and Rubel in \cite{HorwitzRubel86} and may be of independent interest.

\begin{theorem}\label{HRrevised} Let $A$ and $B$ be two monic Blaschke products of degree $n$. Suppose that there are $n$ points $\lambda_1, \ldots, \lambda_n \in \mathbb{D}$ such that $A(\lambda_j) = B(\lambda_j)$ for $j = 1, \ldots, n$, counting multiplicities. Then $A = B$. \end{theorem}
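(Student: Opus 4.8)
The plan is to set up a counting argument comparing zeros of the difference $A - B$ inside $\mathbb{D}$ with a contour-integral count of zeros that the hypothesis forces, following the strategy of Horwitz and Rubel but tracking the monic normalization carefully. Write $A = P/\widetilde{Q}$ and $B = R/\widetilde{S}$ where $P, R$ are the monic numerator polynomials (so $P(z) = \prod (z - a_j)$, $R(z) = \prod(z - b_j)$) and $\widetilde{Q}(z) = \prod(1 - \bar a_j z)$, $\widetilde{S}(z) = \prod(1 - \bar b_j z)$ are the reversed polynomials. The condition $A(\lambda_j) = B(\lambda_j)$ for $j = 1, \dots, n$ counting multiplicities means that $P\widetilde{S} - R\widetilde{Q}$ — a polynomial of degree at most $2n$ — vanishes at the $n$ points $\lambda_1, \dots, \lambda_n \in \mathbb{D}$ with multiplicity. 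So $A - B = (P\widetilde{S} - R\widetilde{Q})/(\widetilde{Q}\widetilde{S})$ has at least $n$ zeros in $\mathbb{D}$, counting multiplicity.

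Next I would get an upper bound on the number of zeros of $A - B$ in $\mathbb{D}$ from the argument principle. Since $A, B$ are finite Blaschke products of degree $n$, on $\mathbb{T}$ both $A$ and $B$ have modulus $1$, and $A - B = A(1 - \bar A B)$ on $\mathbb{T}$ (using $\bar A = 1/A$ there). The number of zeros of $A - B$ in $\mathbb{D}$ equals the winding number of $(A-B)$ around $0$ along $\mathbb{T}$, which splits as $(\text{wind of } A) + (\text{wind of } 1 - \bar A B)$. The first term is $n$. For the second, $\bar A B$ is itself (the boundary values of) a finite Blaschke product of degree $2n$, and $1 - \bar A B$ maps $\mathbb{T}$ into the closed disk of radius $1$ centered at $1$, hence its winding number about $0$ is $\le 0$; so the total number of zeros of $A-B$ in $\mathbb{D}$ is at most $n$. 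Combined with the lower bound, $A - B$ has \emph{exactly} $n$ zeros in $\overline{\mathbb{D}}$, all in $\mathbb{D}$, namely the $\lambda_j$, and none on $\mathbb{T}$, and moreover the winding number of $1 - \bar A B$ must be exactly $0$.

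The final and most delicate step is to convert "exactly $0$ winding number of $1 - \bar A B$ and no boundary zeros" into $A \equiv B$. The point is that $1 - \bar A B$ takes values in the closed disk $\overline{D(1,1)}$, which touches $0$ only at the single boundary point $0$; a curve in that disk with winding number $0$ about $0$ and not passing through $0$ is fine, so one needs more. Here I expect to use the monic hypothesis: examine the top-degree behavior. The polynomial $P\widetilde{S} - R\widetilde{Q}$ has degree $2n$ with leading coefficient $\overline{b_1 \cdots b_n} - \overline{a_1 \cdots a_n}$ (the coefficient of $z^{2n}$), and its $z^0$ coefficient is $P(0)\widetilde{S}(0) - R(0)\widetilde{Q}(0) = \prod(-a_j) - \prod(-b_j)$, which is the negative conjugate of the leading coefficient. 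If this polynomial is not identically zero it has exactly $2n$ roots; we have located $n$ of them in $\mathbb{D}$, and by the standard Blaschke reflection symmetry (if $w$ is a root so is $1/\bar w$) the other $n$ lie in the complement of $\overline{\mathbb{D}}$ — unless it degenerates. I would push on this symmetry together with the vanishing of the winding excess to force $P\widetilde{S} \equiv R\widetilde{Q}$, and then, since $P, R$ are monic of the same degree and $\widetilde{Q}, \widetilde{S}$ are reversed polynomials with $\widetilde{Q}(0) = \widetilde{S}(0) = 1$, unique factorization in $\mathbb{C}[z]$ gives $P = R$ and $\widetilde{Q} = \widetilde{S}$, i.e. $A = B$. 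The main obstacle is making the degeneration analysis in this last step airtight — ruling out the possibility that cancellation in $P\widetilde{S} - R\widetilde{Q}$ drops the degree and lets the $n$ "extra" reflected roots disappear — and it is precisely here that the monic assumption (which pins down the leading coefficient of each numerator and denominator) does the work, which is why, as the authors note, the argument leans on the original Horwitz–Rubel reasoning.
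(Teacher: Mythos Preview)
Your setup is right: the numerator polynomial $N := P\widetilde S - R\widetilde Q$ has degree at most $2n$, the hypothesis gives $n$ zeros in $\mathbb{D}$, and the Blaschke reflection symmetry $z^{2n}\overline{N(1/\bar z)} = -N(z)$ pairs each nonzero root $c\in\mathbb{D}$ with a root $1/\bar c$ outside $\overline{\mathbb{D}}$. But that yields only $2n$ roots of a polynomial of degree $\le 2n$, which is perfectly compatible with $N\not\equiv 0$; neither the winding-number computation nor the ``degeneration analysis'' you gesture at supplies the missing root. In fact your winding argument is in tension with what actually happens: the curve $1-\bar A B$ on $\mathbb{T}$ \emph{does} pass through $0$, so its winding number about $0$ is not defined as stated, and your conclusion ``none on $\mathbb{T}$'' is false in general.

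The paper closes exactly this gap with a separate lemma: for any two monic degree-$n$ Blaschke products there is always some $\lambda\in\mathbb{T}$ with $A(\lambda)=B(\lambda)$. The proof writes $F(z)=\prod_j (z-a_j)/(z-b_j)$, notes that on $\mathbb{T}$ the equality $A=B$ is equivalent to $F(\lambda)\in\mathbb{R}$, and then takes a holomorphic logarithm of $G(z)=F(1/z)$ near $\overline{\mathbb{D}}$ and applies the mean value property to its imaginary part (normalized to vanish at $0$) to locate a point on $\mathbb{T}$ where that imaginary part vanishes. This boundary zero is the $(2n{+}1)$st root of $N$ and forces $N\equiv 0$. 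The paper also treats the case that some $\lambda_j=0$ separately, since then the reflected root is at infinity; one shows $N=z^kR$ and uses the symmetry to see $\deg N\le 2n-k$, after which the remaining $2n+1-k$ roots again force $N\equiv 0$. Your proposal is missing this boundary-zero lemma, and without it the count does not close.
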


For clarity, recall that two holomorphic functions $f$ and $g$ agree at $\lambda$ with multiplicity $k$ if $f-g$ has a zero of multiplicity $k$ at $\lambda$. Theorem \ref{HRrevised} differs from the classical Horwitz-Rubel theorem in that their result requires $\lambda_1, \dots, \lambda_n$ to be distinct, while this result allows the $\lambda_j$ to be repeated. Still, the proof is similar to the proof of the original theorem and so, we postpone it to Section \ref{sec:RH}.

Returning to level sets, in \cite{Stephenson, Stephenson2} Stephenson and Sundberg studied $r$-level curves of analytic functions  $f$, i.e. curves for which the modulus of $f$ is a constant $r$. For example, they characterized when two inner functions share an $r$ level curve:

\begin{theorem}[Stephenson and Sundberg, \cite{Stephenson2}] Let $f_1$ and $f_2$ be inner functions and suppose that they have an $r$-level curve in common for some $r$ with $0<r<1$. Then there exists $\lambda \in \mathbb{T}$ such that $f_1 = \lambda f_2$. \end{theorem}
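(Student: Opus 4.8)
The plan is to reduce the statement to an application of the uniqueness-type result the paper has been building toward, namely Theorem~\ref{HRrevised} (the generalized Horwitz--Rubel theorem), together with the elementary structure of finite Blaschke products. However, the theorem as stated concerns arbitrary inner functions, so in the spirit of this section I would first treat the case that $f_1$ and $f_2$ are finite Blaschke products and then indicate how the general argument (the one actually due to Stephenson and Sundberg) goes via analytic continuation. For the finite Blaschke case: suppose $f_1$ and $f_2$ share an $r$-level curve $\gamma$, i.e.\ a connected component of $S^{f_1}_r$ that is also a component of $S^{f_2}_r$. After multiplying $f_1$ by a suitable unimodular constant we may assume $\gamma$ is positively oriented around the same region; on $\gamma$ we have $|f_1| = |f_2| = r$, so $g := f_1/f_2$ is holomorphic and nonvanishing in a neighborhood of $\gamma$ with $|g| \equiv 1$ there. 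By the open mapping / maximum modulus principle applied on both sides of $\gamma$, $g$ must be constant in a neighborhood of $\gamma$, hence $f_1 = \lambda f_2$ on a neighborhood of $\gamma$ for some $\lambda \in \mathbb{T}$, and then by the identity theorem $f_1 = \lambda f_2$ everywhere. (If one only knows $\gamma$ is a level \emph{curve} rather than a full component, one argues locally the same way, since $|g|\equiv 1$ on a smooth arc forces $g$ constant on a one-sided neighborhood and then everywhere by reflection and the identity theorem.)

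Alternatively, and more in keeping with how this paper will use it later, I would give the argument through values. If $f_1$ and $f_2$ share an $r$-level curve, then on that curve the argument of $f_i$ winds around the circle of radius $r$ some number of times; choosing $n$ large enough, one can find $n$ points $\lambda_1, \dots, \lambda_n$ on the common curve where $f_1$ and $f_2$ take a common prescribed value $re^{i\theta_j}$, but this does not immediately give $f_1(\lambda_j) = f_2(\lambda_j)$ at the \emph{same} point. So the cleaner route really is the maximum-modulus one above: once $|f_1/f_2| \equiv 1$ on an arc, we are done. I would then note that the reduction to the general inner case requires replacing ``neighborhood in $\mathbb{C}$'' by ``pseudohyperbolic neighborhood inside $\mathbb{D}$,'' using that an inner function extends analytically across any free boundary arc (which a level curve locally provides, since $|f_i| = r < 1$ there so $\gamma \subset \mathbb{D}$), and the same maximum-modulus argument applies.

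The key steps, in order, are: (1) observe $\gamma \subset \mathbb{D}$ since $r < 1$, so both functions are holomorphic near $\gamma$; (2) form $g = f_1/f_2$, well-defined and holomorphic near $\gamma$ because $f_2$ is zero-free on a neighborhood of $\gamma$ (its zeros lie in $\Omega^{f_2}_r$, which is open and disjoint from $\partial\Omega^{f_2}_r \supseteq \gamma$); (3) show $|g| \equiv 1$ on $\gamma$; (4) deduce $g$ is locally constant via maximum/minimum modulus on each side of $\gamma$; (5) conclude globally by the identity theorem.

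The main obstacle is step~(4) in the general inner-function setting: one must be careful that $g$ is genuinely defined and holomorphic on a two-sided neighborhood of the arc $\gamma$, which requires knowing the level curve is a smooth simple arc (Lemma~\ref{lem:ll}'s implicit-function-theorem argument handles this for finite Blaschke products away from critical values, and analytic continuation of inner functions across $\gamma$ handles the general case) and that the zeros of $f_2$ do not accumulate on $\gamma$ from the inside — true for finite Blaschke products trivially, and true for general inner functions because $|f_2| = r$ on $\gamma$ while $|f_2|$ would have to dip to $0$ near any accumulating zero, contradicting continuity of $|f_2|$ across the free boundary arc. Once holomorphy of $g$ on a full neighborhood of $\gamma$ is secured, the conclusion $g \equiv \lambda \in \mathbb{T}$ is immediate, and Theorem~\ref{HRrevised} is not strictly needed for \emph{this} statement, though it is the natural tool for the containment version (Theorem~\ref{thm:B1}) that the paper proves next.
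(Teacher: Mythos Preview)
First, note that the paper does not prove this theorem; it is quoted from \cite{Stephenson2} as background for the paper's own containment result (Theorem~\ref{thm:LSB}), so there is no in-paper proof to compare against.

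Your argument has a genuine gap at step~(4). The assertion that $|g|\equiv 1$ on $\gamma$ forces $g$ to be constant ``by the open mapping / maximum modulus principle applied on both sides of $\gamma$'' is false as stated. A holomorphic function can have constant modulus on a smooth arc, or even on a closed curve, without being constant: take $g(z)=e^{z}$ with $\gamma$ a segment of the imaginary axis, or any nonconstant finite Blaschke product with $\gamma=\mathbb{T}$. The maximum/minimum modulus principle would yield the conclusion if $g=f_1/f_2$ were holomorphic and zero-free on the \emph{entire} region $\Omega$ bounded by $\gamma$, but it is not: by Remark~\ref{rem:zeros} each $f_j$ has zeros in $\Omega$, so $g$ is genuinely meromorphic there with both zeros and poles, and $|g|=1$ on $\partial\Omega$ imposes no constraint. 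That the argument cannot be purely local is confirmed by an explicit obstruction: $f_1(z)=z$ and $f_2(z)=z^{2}/r$ share the $r$-level curve $\{|z|=r\}$ yet are not unimodular multiples of one another. Of course $f_2$ is not inner, which is exactly the point --- the theorem needs the global hypothesis that $|f_j|=1$ on $\mathbb{T}$, and your maximum-modulus sketch never uses it.

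What is missing is the actual content of the Stephenson--Sundberg argument: one must exploit that $\gamma$ is a level curve of \emph{both} $|f_1|$ and $|f_2|$ together with the behaviour of the $f_j$ on the annular region between $\gamma$ and $\mathbb{T}$, typically via a reflection or harmonic-measure argument that goes well beyond the bare maximum modulus principle. Your alternative route through common values also does not close the gap (as you yourself observe), and Theorem~\ref{HRrevised} is not directly applicable here since a shared level curve does not by itself produce points where the two functions take equal values.
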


Here we use Theorem \ref{HRrevised} to prove a related result for finite Blasche products. While we consider a more restricted class of functions, we only require that their level sets satisfy a containment relationship. 

\begin{theorem} \label{thm:LSB} Let $B$ and $C$ be finite Blaschke products with $\deg B = \deg C$. If there is some $r \in (0,1)$ with $\Omega_r^B \subseteq \Omega_r^C$, then there exists $\lambda \in \mathbb{T}$ such that  $B = \lambda C$.
\end{theorem}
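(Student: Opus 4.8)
The plan is to reduce Theorem \ref{thm:LSB} to the generalized Horwitz--Rubel theorem (Theorem \ref{HRrevised}) by exploiting the containment $\Omega_r^B \subseteq \Omega_r^C$ to locate enough common values of suitably normalized Blaschke products. Write $n = \deg B = \deg C$. The first step is to show that the containment forces the two level sets to \emph{coincide}: $\Omega_r^B = \Omega_r^C$. Indeed, $|B| \le r$ on $\overline{\Omega_r^B}$ with equality exactly on the boundary, so $|C| < r$ throughout $\Omega_r^B$ and hence $|C| = r$ cannot occur in $\Omega_r^B$; conversely, if the containment were strict, some component $U$ of $\Omega_r^C$ would satisfy $U \setminus \overline{\Omega_r^B} \ne \emptyset$, and I would argue via the maximum/minimum modulus principle and Lemma \ref{lem:ll} (the component count) that the zero counts and critical-point counts cannot balance on both sides. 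The cleanest route: on $\Omega_r^B$, $C/B$ is holomorphic and bounded by $1$ in modulus (since $|C| < r$ there forces $|C/B|$ bounded near the boundary where $|B|=r$, more carefully one works component-by-component with the proper maps $B|_{\Omega^j}$ from the proof of Lemma \ref{lem:ll}), and symmetric reasoning after establishing equality of the sets.

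Once $\Omega_r^B = \Omega_r^C$, hence $S_r^B = S_r^C$, the second step is to pass to a normalization. On each component $\Omega_r^j$ of the common level set, $B$ and $C$ restrict to proper maps onto $D_r(0)$ of the same degree $d_j$ (this degree is determined by the set alone, via Rouch\'e as in Lemma \ref{lem:ll}). The idea is to use the boundary correspondence: on $S_r^j := \partial \Omega_r^j$ both $|B| = |C| = r$, so $B/C$ has constant modulus $1$ on $S_r^j$, i.e. $B\overline{C} = r^2$ there. I would then apply this on the full boundary $S_r^B = S_r^C$ and invoke a suitable argument-principle or reflection argument to count solutions of $B(z) = C(z)$. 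Concretely, after multiplying $B$ and $C$ by unimodular constants to make them monic (which does not change level sets), one can examine $g = B - C$: on $\partial \Omega_r^B$ we have $|B| = |C| = r$, and I want to conclude $g$ has at least $n$ zeros in $\mathbb{D}$ counting multiplicity, at which point Theorem \ref{HRrevised} gives $B = C$, hence the original $B$ and $C$ agreed up to the unimodular factors, i.e. $B = \lambda C$.

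To get the $n$ common points, the key step is a winding-number computation on the boundary curves. Let $\Omega_r^B$ have components $\Omega_r^1, \dots, \Omega_r^J$ with degrees $d_1 + \cdots + d_J = n$. On $\partial \Omega_r^j$, the function $B/C$ takes values on $\mathbb{T}$; its winding number around $0$ as we traverse $\partial \Omega_r^j$ once equals $\deg(B|_{\Omega_r^j}) - \deg(C|_{\Omega_r^j}) = d_j - d_j = 0$ by the boundary-correspondence degree formula for proper maps. From winding number zero of $B/C$ on each $\partial \Omega_r^j$, one deduces (since $B/C$ maps the boundary into $\mathbb{T}$, missing at least one point on each component because it has winding zero) that there is a holomorphic branch of $\log(B/C)$ near each boundary curve, and a normal-families / argument-principle bookkeeping shows $B - C$ has exactly $d_j$ zeros inside $\Omega_r^j$ — heuristically because $B$ and $C$ are "close" relative to their common modulus $r$ on the boundary. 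Summing over $j$ yields $n$ zeros of $B - C$ in $\bigcup_j \Omega_r^j \subseteq \mathbb{D}$, counted with multiplicity; then Theorem \ref{HRrevised} applies.

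The main obstacle I expect is making the zero-counting for $B - C$ rigorous: winding number zero of $B/C$ on the boundary does \emph{not} by itself pin down the number of zeros of $B - C$ inside, so one needs an extra input. The honest fix is probably a Rouch\'e-type estimate: on $\partial\Omega_r^j$, write $B - C = B(1 - C/B)$ with $|B| = r$ and $C/B$ of modulus $1$; this is a genuinely delicate borderline case of Rouch\'e, so instead I would argue more cleverly by considering, for small $\varepsilon > 0$, the slightly shrunken level sets $\Omega_{r-\varepsilon}^B$ on whose boundary $|B| = r - \varepsilon < r = |C|$-ish fails too — so really the correct device is to compare $B$ and $C$ to a common proper model on each component via the Riemann map to $D_r(0)$, reducing to the statement: two degree-$d$ Blaschke-type proper maps of a Jordan domain onto $D_r(0)$ inducing the \emph{same} level set and agreeing in boundary modulus must share $d$ interior values. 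I expect this reduction, and its careful interplay with multiplicities at shared critical points (the $k$ of Lemma \ref{lem:ll}), to be the technical heart, and it is presumably where the authors' actual argument does its real work.
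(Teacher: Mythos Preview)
Your overall strategy --- find $n$ zeros of $B-C$ in $\mathbb{D}$ (after making both monic) and then invoke Theorem~\ref{HRrevised} --- is exactly the paper's strategy. But your execution has a real gap, and you spend most of your effort on a step the paper does not need.

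First, you try to upgrade the containment $\Omega_r^B\subseteq\Omega_r^C$ to equality $\Omega_r^B=\Omega_r^C$. This is a detour: equality of the level sets is essentially a \emph{consequence} of the theorem, not a stepping stone toward it. Your sketched arguments for equality are not valid as stated (for instance, $C/B$ is not holomorphic on $\Omega_r^B$ because $B$ vanishes there, and the component-count/min-modulus heuristic does not actually rule out strict containment without already knowing something close to the conclusion). The paper never proves equality; it works directly from the one-sided containment, which gives only $|C|\le r$ on $\partial\Omega_r^B$.

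Second, and this is the real issue, you correctly identify that Rouch\'e for $B-C$ on $\partial\Omega_r^B$ is borderline (since $|C|\le r=|B|$ there, not strictly less), and you correctly observe that shrinking the domain to $\Omega_{r-\varepsilon}^B$ does not help (on $\partial\Omega_{r-\varepsilon}^B$ one has $|B|=r-\varepsilon$ but still only $|C|<r$). The fix you do not find is to perturb the \emph{function} rather than the domain: the paper considers
\[
f_m \;=\; C - \bigl(1+\tfrac{1}{m}\bigr)B .
\]
On $\partial\Omega_r^B$ one has $|C|\le r < \bigl(1+\tfrac{1}{m}\bigr)r = \bigl|(1+\tfrac{1}{m})B\bigr|$, so Rouch\'e applies cleanly on each component $\Omega_r^j$ and gives $f_m$ at least $n_{B,j}$ zeros there, hence $n$ zeros in $\Omega_r^B$ total. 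One then passes to a subsequence so that these zeros $a_k(m_\ell)$ converge to points $a_1,\dots,a_n\in\overline{\Omega_r^B}\subset\mathbb{D}$, and checks (by a polynomial-division/boundedness argument on the numerators) that $\prod_k(z-a_k)$ divides the numerator of $C-B$, with the correct multiplicities when several $a_k$ coincide. That yields the $n$ zeros of $C-B$ needed for Theorem~\ref{HRrevised}. Your winding-number and Riemann-map ideas are not needed and, as you suspected, do not by themselves pin down the zero count of $B-C$.
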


\begin{proof} Without loss of generality, we can assume $B$ and $C$ are monic. Let $\deg B =n = \deg C$ and write $B = \tfrac{q_b}{p_b}$ and $C = \tfrac{q_c}{p_c}$ for polynomials $q_b, p_b, q_c, p_c$ with $\deg q_b = \deg q_c = n$.  By way of contradiction, assume $C \not \equiv B$. Set $f = C-B = \tfrac{r}{p}$ for $p = p_b p_c$ and for each $m \in \mathbb{N}$, define $f_m$ and $r_m$ so
\[ f_m = C - (1 + \tfrac{1}{m})B = \frac{ q_ c p_b - (1+\tfrac{1}{m}) q_b p_c}{p_bp_c} := \frac{r_m}{p}.\]
Then both $f_m \rightarrow f$,  $r_m \rightarrow r$ uniformly on $\overline{\mathbb{D}}$. By the Cauchy integral formula, the derivatives $f_m^{(k)} \rightarrow f^{(k)}$,  $r_m^{(k)} \rightarrow r^{(k)}$ converge uniformly on $\overline{\mathbb{D}}$ as well for each $k \in \mathbb{N}.$ 

Let $\Omega_r^1,\dots, \Omega_r^J$ denote the components of $\Omega_r^B$ and for each $j$, let $n_{B,j}$ denote the number of zeros of $B$ in $\Omega_r^j$. Then $\sum_j n_{B,j} =n$. For each $z \in \partial \Omega_r^j$, 
\[ | f_m(z) +(1 + \tfrac{1}{m})B(z)| = |C(z)| \le r <r (1 + \tfrac{1}{m})  = |(1+ \tfrac{1}{m}) B(z)|.\]
For each $j$ and $m$, shrink $\Omega_r^j$ slightly to obtain a compact $K^j_m \subseteq \Omega_r^j$ such that 
\[  | f_m(z) +(1 + \tfrac{1}{m})B(z)|  < |(1+ \tfrac{1}{m}) B(z)|\]
for each $z \in \partial K^j_m$ and $K^j_m$ contains $n_{B,j}$ zeros of $B$. By a standard version of Rouch\'e's theorem (for example, Theorem $11$ in \cite{MortiniRupp2014}), $B$ and $f_m$ have the same number of zeros in $K_m^j$. Thus, $f_m$ has at least $n_{B,j}$ zeros in each $\Omega_r^j$ and thus, at least $n$ zeros in $\Omega_r^B$. Call these zeros $a_1(m), \dots, a_n(m).$ By passing to a subsequence $(f_{m_\ell})$, we can further assume
\[  a_1(m_\ell) \rightarrow a_1, \dots, a_n(m_\ell) \rightarrow a_n\]
for some $a_1, \dots, a_n \in \overline {\Omega_r^B}$.   

Now we need to show that $\prod_{k=1}^n (z-a_k)$ divides $r$, the numerator of $f$. To that end, observe that for each $\ell$, there is a polynomial $Q_\ell$ with $\deg Q_\ell \le n$ such that the numerator of $f_{m_\ell}$ is given by
 \[ r_{m_\ell}(z)  = Q_\ell(z) \prod_{k=1}^n (z - a_k(m_{\ell})).\]
 Then because each $ a_k(m_{\ell}) \in \Omega_r^B$, for all $z$ with $|z|=2$, we have
 \[ |Q_\ell(z)| \le  \max_{\{z: |z|=2\}} \left( |q_c(z) p_b(z)| + 2|q_b(z) p_c(z) |\right) \prod_{k=1}^n \frac{1}{2 -  |a_k(m_\ell)|} \le M,\]
where $M$ is independent of $\ell$. Thus, $Q_\ell$ and all of its derivatives (via the Cauchy integral formula) are uniformly bounded on $\overline{\mathbb{D}}$ by a constant independent of $\ell$. Now note that for each $k$,
\[ 0 \le  \lim_{\ell \rightarrow \infty} |r_{m_\ell}(a_k)| \le M \lim_{\ell \rightarrow \infty} \prod_{i=1}^n \left| a_k - a_i(m_{\ell}) \right|=0,\]
so $r(a_k)  = 0$. If the list $a_1, \dots, a_n$ contains a repeated zero, say $a$ with multiplicity $s$, then analogous arguments shows that $r(a)=0, r'(a)=0, \dots, r^{(s-1)}(a)=0.$ This implies that $(z-a)^s$ must divide $r$ and putting these together,  $f = \tfrac{Q}{p} \prod_{k=1}^n (z-a_k) $ for some polynomial $Q$ with $\deg Q \le n$. Thus, $f = C -B$ has at least $n$ zeros in $\mathbb{D}$ including multiplicity and hence, Theorem \ref{HRrevised} implies that $B=C$. 
\end{proof}

This theorem implies a similar result if $\deg C \le \deg B$. 

\begin{corollary} \label{cor:LSB} Let $B$ and $C$ be finite Blaschke products with $\deg C \le \deg B$. If there is some $r \in (0,1)$ with $\Omega_r^B \subseteq \Omega_r^C$, then there exists $\lambda \in \mathbb{T}$ such that  $B = \lambda C$.
\end{corollary}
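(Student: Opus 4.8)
The plan is to reduce Corollary \ref{cor:LSB} to the equal-degree statement Theorem \ref{thm:LSB} by enlarging $C$ to a finite Blaschke product of the same degree as $B$ whose level set still contains $\Omega_r^B$. The engine is a simple monotonicity observation: every level set $\Omega_r^C$ of a finite Blaschke product lies inside $\mathbb{D}$ (for $|z|\ge 1$ one has $|C(z)|\ge 1>r$), and $|g(z)|<1$ for all $z\in\mathbb{D}$ whenever $g$ is a non-constant finite Blaschke product, so multiplying $C$ by such a $g$ only decreases $|C|$ on $\mathbb{D}$ and hence $\Omega_r^C\subseteq\Omega_r^{Cg}$.

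First I would dispose of the case $\deg C=\deg B$, which is precisely Theorem \ref{thm:LSB} and yields $B=\lambda C$ at once. So assume toward a contradiction that $\deg C<\deg B$, and set $k=\deg B-\deg C\ge 1$. Since $B$ has only finitely many zeros, I can pick $a\in\mathbb{D}$ with $B(a)\ne 0$; put $\psi(z)=\bigl(\tfrac{z-a}{1-\bar a z}\bigr)^{k}$ and $\tilde C=C\psi$. Then $\deg\tilde C=\deg B$, and by the monotonicity observation $\Omega_r^B\subseteq\Omega_r^C\subseteq\Omega_r^{\tilde C}$.

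Now Theorem \ref{thm:LSB} applies to the pair $(B,\tilde C)$ and produces $\lambda\in\mathbb{T}$ with $B=\lambda\tilde C$. Evaluating at $a$ gives $B(a)=\lambda C(a)\psi(a)=0$, contradicting $B(a)\ne 0$. Hence $\deg C<\deg B$ is impossible, so $\deg C=\deg B$ and the corollary follows from the first case. (Alternatively, one could compare two choices $\tilde C=C\psi_a$ and $\tilde C'=C\psi_{a'}$ at distinct points $a\ne a'$, concluding that $\psi_a/\psi_{a'}$ must be a unimodular constant, which it is not.)

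I expect no substantial obstacle here once Theorem \ref{thm:LSB} is available; the only points needing care are confirming that the level sets genuinely sit inside $\mathbb{D}$ so the strict bound $|\psi|<1$ can be invoked, and noting the degenerate case $\deg C=0$, where $\Omega_r^C=\emptyset$ forces $\Omega_r^B=\emptyset$, which is impossible for $\deg B\ge 1$.
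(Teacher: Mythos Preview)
Your proof is correct and follows essentially the same approach as the paper: multiply $C$ by a degree-$k$ Blaschke product to match $\deg B$, use the monotonicity $\Omega_r^C\subseteq\Omega_r^{C\psi}$ on $\mathbb{D}$, and invoke Theorem~\ref{thm:LSB}. The only cosmetic difference is in extracting the contradiction---the paper lets the auxiliary factor be arbitrary and notes that $B=\lambda A C$ cannot hold for every degree-$k$ Blaschke product $A$, whereas you choose a specific $\psi$ vanishing at a non-zero of $B$ and evaluate; your version is arguably more direct.
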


\begin{proof} First we assume that $\deg C < \deg B$ and show that this gives a contradiction. To that end, let $A$ be an arbitrary finite Blaschke product with $\deg A = \deg B -\deg C=:n >0$. Set $\widehat{C} = A C.$ Then if $|C(z)|<r$, it must be the case that $z \in \mathbb{D}$ and so we have 
\[ | \widehat{C}(z) |  =|A(z)|  | C(z) |  < |C(z)| <r.\]
This implies $\Omega_r^C \subseteq \Omega_r^{\widehat{C}}$ and thus by assumption,
\[ \Omega_r^B \subseteq \Omega_r^C \subseteq \Omega_r^{\widehat{C}}.\]
Since $\deg B = \deg \widehat{C}$, Theorem \ref{thm:LSB} implies that there is a constant $\lambda \in \mathbb{T}$ such that $ B = \lambda \widehat{C} = \lambda A C.$ Since $A$ was an arbitrary Blaschke product of degree $n$, this gives a contradiction.  

 Thus, it must be the case that $\deg C = \deg B$. Then the conclusion follows immediately from Theorem \ref{thm:LSB}. \end{proof}

\begin{remark} Theorem \ref{thm:LSB} and Corollary \ref{cor:LSB} are results of the following flavor: if two Blaschke products $B$ and $C$ share a degree inequality  and associated sets share a containment relationship, then the two Blaschke products are equal up to a unimodular constant.  In these cases, the associated sets are $r$-level sets of the Blaschke products.  Gau and Wu (see Lemma \ref{GW} below) showed that a similar result holds if one takes the associated sets to be numerical ranges of compressions of shifts defined using the Blaschke products. These complementary results suggest that level sets of finite Blaschke products and numerical ranges of compressions of shifts may possess some similar structures.
\end{remark}

\begin{remark} Let  $J_{n + 1, a}$ denote the perturbed Jordan block given by \eqref{perturbed} and assume that $ |a| < 1$. These matrices have all eigenvalues in $\mathbb{D}$, they are contractions, and have defect index $1$ (that is, the rank of $I - A^\star A$ is one). Therefore, these represent compressions of the shift operator and one can show that their eigenvalues are the zeros of the function $(z^n-a)/(1 - \overline{a} z^n)$. In \cite{GreenbaumChoi} it is shown that the Crouzeix conjecture holds for such perturbed Jordan blocks, and therefore so does the LSC conjecture. Modifying the proof for $n > 6$ and applying Corollary~\ref{cor:LSB} allows us to view the proof through the lens of level sets.

Recall that $W(S_{z^n}) = W(J_n) = \overline{D(0, \cos(\pi/(n+1))}$ (see \cite{HaagerupdelaHarpe}, for example). Since the Jordan block $J_n$ is a compression of this matrix, $W(J_n) \subseteq W(J_{n+1, a})$. We first show that LSC holds for Blaschke products of the form $\lambda z^n$, where $\lambda \in \mathbb{T}$ and then obtain the desired result from this.

Fix $n \ge 6$ and let $C$ be an arbitrary Blaschke product with $\deg C \le n$.
Then $W(S_{z^n}) = W(J_n) \subseteq W(J_{n+1, a})$. But, $n\ge 6$ implies $\cos(\pi/(n+1))^n > 1/2$.  In particular, $|z^n| > 1/2$ on $\partial W(J_n) \subset W(J_{n+1,a})$, establishing the claim.

Now, if $|C| < 1/2$ on $W(J_{n+1, a})$, then we have $\Omega_{1/2}^{z^n} \subseteq W(S_{z^n}) \subseteq \Omega_{1/2}^{C}$. By Corollary~\ref{cor:LSB}, $C = \lambda z^n$ for some $\lambda \in \mathbb{D}$, and the result holds by the previous paragraph. \end{remark}

\section{LSC Inequality for classes of $B$ and $\Theta$} \label{sec:LSC}

In this section, we use a variety of approaches and techniques to prove that the LSC inequality \eqref{eqn:LSC} holds for several classes of finite Blaschke products $B, \Theta$. One approach that arises frequently is the analysis of related Euclidean and pseudohyperbolic disks. Before proceeding, we establish some notation and a few important formulas.

Let $D_R(c)$ denote a Euclidean disk in $\mathbb{D}$ of radius $R$ and center $c$ and let $D_\rho(z_0, r)$ denote the pseudohyperbolic disk with (pseudohyperbolic) center $z_0$ and pseudohyperbolic radius $r$.
 An important fact is that every pseudohyperbolic disk is a Euclidean disk in $\mathbb{D}$ and every Euclidean disk in $\mathbb{D}$ is a pseudohyperbolic disk. Converting between the two representations is straightforward; first, $D_\rho(z_0, r)$ coincides with the Euclidean disk $D_R(c)$, where 
\begin{equation} \label{eqn:phd1} c = \frac{ (1-r^2)z_0}{1-r^2|z_0|^2} \ \text{ and } R = \frac{r(1-|z_0|^2)}{1-r^2|z_0|^2}.\end{equation}
Meanwhile, if one starts with a Euclidean disk $D_R(c)$, it agrees with the disk $D_\rho(z_0,r)$ where $z_0 \in \mathbb{D}$, $r \in [0,1)$ and the associated centers and radii satisfy the equations 
\[ c = z_0(1-rR) \ \text{ and } R= r(1- |c| |z_0|),\]
see for example page $3$ in \cite{Garnett}.
More specifically, if $c = 0$, then $D_R(c) = D_\rho(0,R)$. If $c \ne 0$, then $D_R(c)$ coincides with $D_\rho(z_0, r)$ where $\arg z_ 0 = \arg c$, $|z_0|$ is the unique solution in $[0,1)$ of 
\begin{equation} \label{eqn:z0}  |z_0| + \tfrac{1}{|z_0|} = \frac{ |c|^2 -R^2 +1}{|c|},\end{equation}
and $r$ is the unique solution in $[0,1)$ of 
\begin{equation} \label{eqn:r}  r + \tfrac{1}{r} = \frac{R^2 -|c|^2 +1}{R}.\end{equation}
These formulas can be found in \cite{MortiniRupp2021}.

\subsection{LSC Inequality via Pseudohyperbolic Disks} We first establish the LSC inequality \eqref{eqn:LSC}  when $W(S_\Theta)$ contains a sufficiently large pseudohyperbolic disk. 
Specifically, Corollary \ref{cor:LSB} leads to the following result:

\begin{theorem}\label{thm:main} Let $B$ be a finite Blaschke product with $\deg B \le m$. Then for each $z_0 \in \mathbb{D}$ and $r \in (0,1)$,
\begin{equation} \label{eqn:Bdisk} \sup \{ |B(z)| : z\in D_\rho(z_0, r^{1/m})\} \ge r.\end{equation}
\end{theorem}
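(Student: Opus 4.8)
The plan is to argue by contradiction, exactly in the spirit of Corollary~\ref{cor:LSB}. Suppose that for some $z_0\in\mathbb D$ and $r\in(0,1)$ we have $|B(z)|<r$ for all $z\in D_\rho(z_0,r^{1/m})$. The key observation is that a pseudohyperbolic disk is itself a level set of a degree-one Blaschke product: if $\varphi_{z_0}(z)=\frac{z-z_0}{1-\bar z_0 z}$ and we set $C_0(z)=\varphi_{z_0}(z)^m$ (a degree-$m$ Blaschke product), then
\[
\Omega^{C_0}_r=\{z\in\mathbb D:|\varphi_{z_0}(z)|^m<r\}=\{z:|\varphi_{z_0}(z)|<r^{1/m}\}=D_\rho(z_0,r^{1/m}).
\]
So the assumption says precisely that $\Omega^{C_0}_r\subseteq\Omega^B_r$. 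First I would reduce to the case $\deg B=m$ (if $\deg B=k<m$, replace $B$ by $AB$ for an arbitrary Blaschke product $A$ of degree $m-k$; since $|A|\le 1$ on $\mathbb D$, $\Omega^B_r\subseteq\Omega^{AB}_r$, so $\Omega^{C_0}_r\subseteq\Omega^{AB}_r$ with $\deg AB=m$). Then Theorem~\ref{thm:LSB} (equivalently Corollary~\ref{cor:LSB}) forces $B=\lambda C_0$, or in the reduced case $AB=\lambda C_0$, for some $\lambda\in\mathbb T$.

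To finish, I would derive a contradiction from $\Omega^{C_0}_r\subseteq\Omega^B_r$ together with $B=\lambda C_0$ (resp.\ $AB=\lambda C_0$). In the equal-degree case $B=\lambda C_0$ we get $|B(z)|=|\varphi_{z_0}(z)|^m$, so on the boundary of $D_\rho(z_0,r^{1/m})$ we have $|B(z)|=r$ and hence $\sup\{|B(z)|:z\in D_\rho(z_0,r^{1/m})\}=r$ by continuity up to the closure — but this is the desired inequality, not yet a contradiction to the strict assumption; so in fact the strict containment $\Omega^{C_0}_r\subsetneq\Omega^B_r$ already fails here, because the two open level sets of $B$ and $C_0=\lambda^{-1}B$ coincide. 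In the reduced case where $A$ is nontrivial, $AB=\lambda C_0$ forces $A$ to be a unimodular constant, contradicting $\deg A=m-k>0$. Either way the contradiction is reached, establishing~\eqref{eqn:Bdisk}.

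The step I expect to be the main obstacle is handling the boundary/closure issue cleanly: Theorem~\ref{thm:LSB} is stated for open level sets, so I want to make sure the strict inequality hypothesis $|B|<r$ on the \emph{open} disk $D_\rho(z_0,r^{1/m})$ is exactly the inclusion $\Omega^{C_0}_r\subseteq\Omega^B_r$ needed to invoke it, and then argue that the resulting equality $B=\lambda C_0$ contradicts strictness (or, in the lower-degree case, contradicts $\deg A>0$). A small subtlety worth checking is that one genuinely wants the \emph{supremum} over the open disk in~\eqref{eqn:Bdisk}: it suffices to show the supremum is $\ge r$, i.e.\ that $|B|<r$ throughout the open disk is impossible whenever $\deg B\le m$. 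Thus the only real content is the reduction to Corollary~\ref{cor:LSB} plus the elementary identification of $D_\rho(z_0,r^{1/m})$ as $\Omega^{C_0}_r$; everything else is bookkeeping.
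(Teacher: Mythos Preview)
Your approach is essentially the paper's: identify $D_\rho(z_0,r^{1/m})$ as $\Omega_r^{C_0}$ for the unicritical $C_0=\varphi_{z_0}^m$, assume the supremum is strictly less than $r$ to obtain $\Omega_r^{C_0}\subseteq\Omega_r^B$, apply the level-set uniqueness result to force $B=\lambda C_0$, and then observe that the supremum actually equals $r$. The paper streamlines your case split by invoking Corollary~\ref{cor:LSB} directly (it already handles $\deg B\le m$); note also that your line ``$AB=\lambda C_0$ forces $A$ to be a unimodular constant'' is not literally true---the contradiction comes from the \emph{arbitrariness} of $A$, exactly as in the proof of Corollary~\ref{cor:LSB}, so just citing that corollary is cleaner.
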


\begin{proof} Let $C$ be a unicritical Blaschke product with $\deg C = m$ and its zero at $z_0$. Then 
\[ \Omega^C_r =\left \{ z\in \mathbb{C}: \left | \frac{z-z_0}{1-\bar{z}_0 z}\right|^m < r \right\} = D_\rho(z_0, r^{1/m}).\]
 By way of contradiction, assume
\[ \sup \{ |B(z)| : z\in D_\rho(z_0, r^{1/m})\} < r.\]
This assumption implies that $\Omega^C_r \subseteq \Omega^B_r$. Then as $\deg B \le \deg C$, Corollary \ref{cor:LSB} implies that there is some $\lambda \in \mathbb{T}$ with $B= \lambda C$. But then
\[ \sup \{ |B(z)| : z\in D_\rho(z_0, r^{1/m})\}=\sup \{ |C(z)| : z\in D_\rho(z_0, r^{1/m})\} = r,\]
a contradiction. Thus, \eqref{eqn:Bdisk} must hold. \end{proof} 

Theorem \ref{thm:main} gives the following corollary related to numerical ranges:

\begin{corollary} \label{cor:main1}  Let $A$ be a square matrix and  $B$ a finite Blaschke product with $\deg B  \le m$. If there is a pseudohyperbolic disk $D_\rho(z_0, r^{1/m}) \subseteq W(A)$, then
\[  \sup \{|B(z)|: z \in W(A) \cap \text{Domain}(B) \} \ge r.\] 
\end{corollary}

By restricting to compressions of the shift, this also gives the LSC inequality \eqref{eqn:LSC} for $\Theta$ whose associated numerical ranges contain a large enough pseudohyperbolic disk.

\begin{corollary}  \label{cor:main}  Let $B, \Theta$ be finite Blaschke products with $\deg B < \deg \Theta:=n$.  If  there is a pseudohyperbolic disk $D_\rho(z_0, (\tfrac{1}{2})^{1/(n-1)}) \subseteq W(S_\Theta)$, then 
\[  \max \{|B(z)|: z \in W(S_\Theta)  \} \ge \tfrac{1}{2}.\] 
\end{corollary}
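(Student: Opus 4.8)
The plan is to obtain Corollary \ref{cor:main} as a direct specialization of Corollary \ref{cor:main1}. The hypothesis is that $W(S_\Theta)$ contains a pseudohyperbolic disk $D_\rho(z_0, (\tfrac12)^{1/(n-1)})$ where $n = \deg \Theta$ and $\deg B < n$, i.e. $\deg B \le n-1$. So I would set $m = n-1$ and $r = \tfrac12$, and observe that $(\tfrac12)^{1/(n-1)} = r^{1/m}$, so the hypothesized disk is exactly $D_\rho(z_0, r^{1/m})$. Applying Corollary \ref{cor:main1} with $A = S_\Theta$ then yields
\[ \sup\{|B(z)| : z \in W(S_\Theta) \cap \mathrm{Domain}(B)\} \ge \tfrac12. \]

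Next I would clean up the two minor discrepancies between this and the desired statement. First, the domain restriction: since $\deg B < \deg \Theta$, all poles of $B$ lie outside $\overline{\mathbb D}$ in the region relevant here — more simply, $W(S_\Theta) \subseteq \overline{\mathbb D}$ because $S_\Theta$ is a contraction, and the poles of $B$ lie in $\{|z| > 1\}$, so $W(S_\Theta) \subseteq \mathrm{Domain}(B)$ and the intersection is just $W(S_\Theta)$. (One should be slightly careful that $B$ may have a pole on $\mathbb T$ only if a zero of $B$ were on $\mathbb T$, which cannot happen for a Blaschke product; alternatively $B$ extends continuously/meromorphically past $\mathbb T$ with poles strictly outside the disk.) Second, replacing $\sup$ with $\max$: $W(S_\Theta)$ is compact (it is a bounded closed convex set, being the numerical range of a finite matrix) and $B$ is continuous on it, so the supremum is attained.

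I do not anticipate a genuine obstacle here — this corollary is essentially bookkeeping on top of Corollary \ref{cor:main1}, which in turn rests on Theorem \ref{thm:main} and hence on the uniqueness result Corollary \ref{cor:LSB}. The only point requiring a word of care is the index arithmetic: one must use $\deg B \le n-1$ (a consequence of the \emph{strict} inequality $\deg B < \deg \Theta$ together with integrality of degrees) so that the exponent $1/(n-1)$ in the hypothesis matches the exponent $1/m$ in Theorem \ref{thm:main}; if $\deg B$ could equal $n$, the statement would fail. Everything else — compactness of $W(S_\Theta)$, that $W(S_\Theta)\subseteq\overline{\mathbb D}$ for a contraction, and that $B$ has no poles in $\overline{\mathbb D}$ — is standard.
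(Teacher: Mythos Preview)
Your proposal is correct and matches the paper's own approach: the paper simply states this corollary as the specialization of Corollary \ref{cor:main1} to $A=S_\Theta$ with $m=n-1$ and $r=\tfrac12$, without further proof. Your additional remarks justifying $W(S_\Theta)\subseteq\mathrm{Domain}(B)$ and the replacement of $\sup$ by $\max$ are accurate and fill in details the paper leaves implicit.
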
 
%\begin{proof} Fix $B, \Theta$ and set $\widehat{B}(z) : = z^{n-1-m} B(z)$. Then Corollary \ref{cor:main1} implies that
%\[  \max_{z\in W(S_\Theta)} |B(z)|  \ge \max_{z\in W(S_\Theta)} |\widehat{B}(z)| \ge \tfrac{1}{2},\]
%since $\widehat{B}$ is well defined on $W(S_\Theta) \subseteq \mathbb{D}$ and $W(S_\Theta)$  is closed. \end{proof}

Later, we use Corollary \ref{cor:main} to study unicritical $\Theta$ in Section \ref{sec:uniTheta} and will provide some associated examples in Section \ref{sec:examples}.

\subsection{LSC Inequality via Fuss's Formula} In this section, we use Fuss' formula for circles circumscribed by quadrilaterals that are inscribed in a circle to prove \eqref{eqn:LSC} when $\deg B =2$ and $\deg \Theta \ge 6$. 

Our proof will also use the following result of Gau and Wu:

\begin{lemma}[Lemma 4.2, \cite{GauWu}\label{GW}] 
Let $S_{\Theta_1}$ and $S_{\Theta_2}$ denote two compressions of the shift with $\Theta_1$ and $\Theta_2$ Blaschke products with $\deg \Theta_1 \le \deg \Theta_2$. Then $S_{\Theta_1}$ is unitarily equivalent to $S_{\Theta_2}$ if and only if $W(S_{\Theta_1})$ contains $W(S_{\Theta_2})$. \end{lemma}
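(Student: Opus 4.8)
\textbf{Proof proposal for Lemma \ref{GW}.}

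The plan is to prove both directions using the correspondence between compressions of the shift and $C_0$ contractions of defect index one, together with basic facts about containment of numerical ranges. For the easy direction, suppose $S_{\Theta_1}$ is unitarily equivalent to $S_{\Theta_2}$; then they have the same numerical range (numerical range is a unitary invariant), so trivially $W(S_{\Theta_1}) \supseteq W(S_{\Theta_2})$. (Note that unitary equivalence also forces $\deg \Theta_1 = \deg \Theta_2$ since the numerical range determines the size of a compression of the shift, but we do not even need that.) The substantive direction is the converse: assuming $W(S_{\Theta_1}) \supseteq W(S_{\Theta_2})$, we must produce a unitary intertwining the two operators.

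The key idea for the hard direction is to exploit the fact that the numerical range of $S_\Theta$ touches the unit circle $\mathbb{T}$ in exactly $\deg \Theta$ points (this is a standard fact about compressions of the shift, tied to the Poncelet property), so that the containment $W(S_{\Theta_1}) \supseteq W(S_{\Theta_2})$ forces $\deg \Theta_1 \ge \deg \Theta_2$; combined with the hypothesis $\deg \Theta_1 \le \deg \Theta_2$, this yields $\deg \Theta_1 = \deg \Theta_2 =: n$. Having equal degrees, the natural route is to show that $S_{\Theta_1}$ is itself a \emph{compression} of $S_{\Theta_2}$ (or vice versa) and then argue that a compression of an $n\times n$ compression of the shift to an $n$-dimensional subspace must actually be the whole operator up to unitary equivalence. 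Concretely, I would use the characterization that $W(S_{\Theta_2})$ equals the intersection of all numerical ranges of $n$-dimensional (or larger) dilations / the fact that $W(A) \subseteq W(B)$ for $A$ a compression of $B$; then the reverse containment $W(S_{\Theta_1}) \supseteq W(S_{\Theta_2})$, together with equality of dimensions, should force $S_{\Theta_1}$ to be unitarily equivalent to $S_{\Theta_2}$ by a dimension-count / extremality argument. An alternative, perhaps cleaner, approach: realize both as $n\times n$ matrices whose numerical ranges are Poncelet-inscribed in $\mathbb{T}$ with $n$ contact points; the contact points of $W(S_{\Theta_2})$ with $\mathbb{T}$ must also lie on $\partial W(S_{\Theta_1})$ (being boundary points of the smaller set that lie on $\mathbb{T}$, hence extreme points of the larger), and an $n$-point Poncelet polygon inscribed in $\mathbb{T}$ and circumscribing $W(S_{\Theta_1})$ forces the two convex bodies — and then the two operators — to coincide.

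Since this lemma is quoted verbatim from \cite{GauWu} (it is their Lemma 4.2), the honest and efficient route in our paper is simply to \emph{cite} it rather than reprove it; the sketch above indicates why it is true. The main obstacle, were one to insist on a self-contained proof, is the converse direction: passing from the purely geometric datum "one numerical range contains another" to the algebraic/operator-theoretic conclusion "the operators are unitarily equivalent." This requires genuinely using the rigidity of numerical ranges of compressions of the shift (their Poncelet property and the exact count of contact points with $\mathbb{T}$), which is precisely the content developed in \cite{GauWu, M98}; there is no soft argument that avoids it.
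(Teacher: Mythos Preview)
Your bottom line matches the paper exactly: the paper does not prove this lemma but simply cites it as Lemma~4.2 of \cite{GauWu}, and your recommendation to do the same is what the authors in fact do.

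That said, one point in your heuristic sketch is wrong and would derail a self-contained proof. You assert that ``the numerical range of $S_\Theta$ touches the unit circle $\mathbb{T}$ in exactly $\deg \Theta$ points.'' This is false: for every finite Blaschke product $\Theta$ the set $W(S_\Theta)$ lies strictly inside $\mathbb{D}$ (for instance, $W(S_{z^n})$ is the closed disk of radius $\cos(\pi/(n+1))<1$), so it has \emph{no} contact points with $\mathbb{T}$. The Poncelet property says something different: $\partial W(S_\Theta)$ is inscribed in infinitely many convex $(n{+}1)$-gons which are themselves inscribed in $\mathbb{T}$, and every point of $\mathbb{T}$ is a vertex of exactly one such polygon. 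Your second sketch, phrased in terms of circumscribing Poncelet polygons, is the correct picture and is essentially how Gau and Wu argue: from $W(S_{\Theta_2})\subseteq W(S_{\Theta_1})$ one uses that the $(n_1{+}1)$-gons tangent to $\partial W(S_{\Theta_1})$ must also contain $W(S_{\Theta_2})$, forcing the degrees to match and then the two Poncelet curves (hence the operators, by the classification of $S_n$-matrices) to coincide.
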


\c
For the purposes of this paper, a Poncelet $(n+1)$-ellipse in $\mathbb{D}$ is an ellipse that is inscribed in a convex $(n+1)$-gon that is itself inscribed in the unit circle. Every Poncelet $(n + 1)$-ellipse in $\mathbb{T}$ is the boundary of the numerical range of a compression of the shift operator $S_\Theta$, where $\Theta$ is a Blaschke product of degree $n$. (This is stated in general in \cite{GauWu}, \cite[p. 219]{GauWuCond} and the proof for $n = 2$ this can be found in the discussion of Conjecture 5.1 of the same paper. For $n = 3$, this appears in \cite[Corollary 3.8]{GorkinWagner}.) We use this in Lemma \ref{lem:fuss} below and the remark that follows it.

\begin{lemma} \label{lem:fuss} Fix $a \in \mathbb{D}$. Then there exists a Blaschke product $\Psi$ with $\deg \Psi = 3$ such that $W(S_\Psi)$ equals the  closure of the  pseudohyperholic disk $D_\rho(a,r)$ for some $r \ge \tfrac{1}{\sqrt{2}}$. \end{lemma}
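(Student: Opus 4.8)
The plan is to use the characterization that every Poncelet $4$-ellipse in $\mathbb{D}$ — an ellipse inscribed in a (convex) quadrilateral that is itself inscribed in $\mathbb{T}$ — is the boundary of $W(S_\Psi)$ for some degree-$3$ Blaschke product $\Psi$. So it suffices to show: for every $a \in \mathbb{D}$ there is a circle centered (in the Euclidean sense) at some point, coinciding as a set with the pseudohyperbolic disk $D_\rho(a,r)$ for some $r \ge \tfrac{1}{\sqrt 2}$, that is inscribed in a cyclic quadrilateral inscribed in $\mathbb{T}$. Since pseudohyperbolic disks are exactly Euclidean disks contained in $\mathbb{D}$ (via \eqref{eqn:phd1}), the relevant object is a Euclidean disk $D_R(c)$ inscribed in a quadrilateral inscribed in $\mathbb{T}$.

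First I would recall \emph{Fuss's formula}: a circle of radius $R$ centered at $c$ is the incircle of some bicentric quadrilateral whose circumcircle is $\mathbb{T}$ (radius $1$, center $0$) if and only if $R$, $|c|$ satisfy the Fuss relation
\[ \frac{1}{(1-|c|)^2} + \frac{1}{(1+|c|)^2} = \frac{1}{R^2}, \]
i.e. $2R^2(1+|c|^2) = (1-|c|^2)^2$. (See \cite{Hess}.) Given $a \in \mathbb{D}$, I want to choose the Euclidean center $c$ on the ray through $a$ — in fact I would parametrize all Euclidean disks $D_R(c)$ that equal $D_\rho(a,r)$ as $r$ ranges over $(0,1)$, using the conversion $c = \tfrac{(1-r^2)a}{1-r^2|a|^2}$, $R = \tfrac{r(1-|a|^2)}{1-r^2|a|^2}$ from \eqref{eqn:phd1} — and then solve for the value(s) of $r$ for which $(R,|c|)$ lands on the Fuss curve. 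Plugging the expressions for $c$ and $R$ into $2R^2(1+|c|^2) = (1-|c|^2)^2$ gives, after clearing the common denominator $1-r^2|a|^2$, a polynomial equation in $r$ (with $|a|$ as a parameter); the goal is to show it has a root $r \in [\tfrac{1}{\sqrt 2}, 1)$.

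I expect the main obstacle to be exactly this last step: verifying that the resulting equation in $r$ always has a solution in $[\tfrac1{\sqrt2},1)$ for every $|a|\in[0,1)$. I would handle it by an intermediate-value argument — evaluate the difference $g(r) := 2R(r)^2(1+|c(r)|^2) - (1-|c(r)|^2)^2$ at the endpoints. At $r=1$ one has $c \to a$, $R \to \tfrac{1-|a|^2}{1-|a|^2}\cdot 1 = 1$... more carefully $R\to \tfrac{1-|a|^2}{1-|a|^2}=1$ while $|c|\to|a|$, so $g(1^-) = 2(1+|a|^2) - (1-|a|^2)^2 > 0$; and I would check $g(\tfrac1{\sqrt2}) \le 0$ (the sanity case $a=0$ gives $R = \tfrac1{\sqrt2}$, $c=0$, and $2\cdot\tfrac12\cdot 1 = 1 = (1-0)^2$, so $g=0$ exactly there, consistent with the square being Poncelet for $S_{z^3}$ — actually degree $2$; I would recheck the degree bookkeeping, but the point stands that $r=\tfrac1{\sqrt2}$ is the extremal value at the center). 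The monotonicity/sign behavior of $g$ in $|a|$ then forces the crossing to occur at some $r \ge \tfrac1{\sqrt2}$, and continuity in $r$ yields the desired root. Finally, one must confirm the inscribed quadrilateral is convex (automatic here since both the incircle and circumcircle are genuine circles with the incircle inside) so that the Poncelet-ellipse-to-$S_\Psi$ correspondence applies, giving $W(S_\Psi) = \overline{D_\rho(a,r)}$ as claimed.
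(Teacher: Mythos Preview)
Your approach is essentially the paper's: both use Fuss's relation for bicentric quadrilaterals together with the correspondence between Poncelet $4$-curves and numerical ranges of degree-$3$ compressed shifts. The difference is only in parametrization. The paper parametrizes by the Euclidean center $c$: for each $c\in\mathbb{D}$, Fuss gives the unique Poncelet-$4$ radius $R=\dfrac{1-|c|^2}{\sqrt{2(1+|c|^2)}}$, and then the conversion formulas \eqref{eqn:phd1} yield directly
\[
\tilde a=\frac{2c}{1+|c|^2},\qquad r=\sqrt{\frac{1+|c|^2}{2}}\ \ge\ \frac{1}{\sqrt2}.
\]
Since $c\mapsto \dfrac{2c}{1+|c|^2}$ maps $\mathbb{D}$ onto $\mathbb{D}$, every $a$ arises, and the inequality $r\ge\tfrac1{\sqrt2}$ is immediate with no intermediate-value step. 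Your parametrization by $r$ (with $a$ fixed) also works, but it costs you the IVT check.

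Two small corrections. First, your limit as $r\to 1$ is wrong: from $c=\dfrac{(1-r^2)a}{1-r^2|a|^2}$ one has $c\to 0$, not $c\to a$. Your conclusion $g(1^-)>0$ survives (indeed $g(1^-)=2\cdot1\cdot1-1=1$), so the IVT argument still goes through, but the displayed value $2(1+|a|^2)-(1-|a|^2)^2$ is not what $g(1^-)$ equals. Second, your degree-bookkeeping worry is unfounded: a Poncelet $(n{+}1)$-ellipse corresponds to a degree-$n$ Blaschke product, so the incircle of a square inscribed in $\mathbb{T}$ (the $a=0$, $r=\tfrac1{\sqrt2}$ case) is exactly $\partial W(S_{z^3})$, degree $3$ as required.
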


\begin{proof} Fix $c \in \mathbb{D}$. By Fuss' formula (see \cite{Fuss} or \cite[Corollary 2]{Fujimura} for a modern reference) there is exactly one disk centered at $c$ whose boundary is inscribed in a quadrilateral that is circumscribed by $\mathbb{T}$ (and thus, is a so-called Poncelet-$4$ circle) and this disk, which we denote $D_R(c)$, has radius
\[ R = \frac{1-|c|^2}{ \sqrt{ 2(1 + |c|^2)}}.\] 
By substituting directly into the formulas from \eqref{eqn:phd1}, one can check that this Euclidean disk agrees with the pseudohyperbolic disk $D_\rho(\tilde{a}, r)$ with pseudohyperbolic center and radius given by
\[ \tilde{a} = \frac{2c}{1+|c|^2} \ \text{ and } r = \frac{\sqrt{1 +|c|^2}}{\sqrt{2}}.\]
Now consider the $a$ in the statement of the lemma and choose $c \in \mathbb{D}$ so that $\text{Arg}(c) = \text{Arg}(a)$ and $|a| = \frac{2 |c|}{1 +|c|^2}$. Then, the above arguments imply that $D_R(c) = D_\rho(a, r)$, where $r \ge 1/\sqrt{2}.$  By the remarks preceding this lemma and our assumption that the Euclidean disk is bounded by a Poncelet $4$-circle, there is a finite Blaschke product $\Psi$ with $\deg \Psi = 3$ and $W(S_\Psi)$ equal to the closure of $D_\rho(a, r)$.
%Now consider the $a$ in the statement of the lemma and choose $c \in \mathbb{D}$ so that $\text{Arg}(c) = \text{Arg}(a)$ and $|a| = \frac{2 |c|}{1 +|c|^2}$. Then, the above arguments imply that $D_R(c) = D_\rho(a, r)$, where $r \ge 1/\sqrt{2}.$  By known facts about Poncelet $4$-circles (see for example, Gorkin-Wagner \cite[Corollary 3.8]{GorkinWagner}), there must be a finite Blaschke product $\Psi$ with $\deg \Psi = 3$ and $W(S_\Psi) = D_\rho(a, r)$.
\end{proof}

\begin{theorem} \label{thm:26} Let $B, \Theta$ be finite Blaschke products with $\deg B = 2$ and $\deg \Theta \ge 6$. Then 
\begin{equation} \label{eqn:cc26}  \max_{z \in W(S_\Theta)} |B(z)| \ge \tfrac{1}{2}. \end{equation}
\end{theorem}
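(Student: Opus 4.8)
The plan is to use the Gau--Wu Poncelet correspondence together with Lemma~\ref{lem:fuss} to embed a large pseudohyperbolic disk inside $W(S_\Theta)$, and then invoke Corollary~\ref{cor:main1} to conclude. Concretely, I would first recall that the boundary of $W(S_\Theta)$ is a Poncelet curve inscribed in an $(n+1)$-gon inscribed in $\mathbb{T}$, with $n = \deg\Theta \ge 6$. The numerical range $W(S_\Theta)$ contains the numerical range of any compression obtained by ``deleting'' tangent lines, and in particular, since $n+1 \ge 7 > 4$, one can find four of the chords of the inscribing polygon whose quadrilateral $Q$ is circumscribed by $\mathbb{T}$; the compression of $S_\Theta$ corresponding to that degree-$3$ sub-Blaschke product $\Psi$ (via Lemma~\ref{GW}, the containment direction) has $W(S_\Psi) \subseteq W(S_\Theta)$, and $W(S_\Psi)$ is the closed region bounded by a Poncelet-$4$ ellipse inscribed in $Q$. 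Alternatively — and this is cleaner — I would pick \emph{any} point $a$ in $W(S_\Theta)$ near its ``center'' and apply Lemma~\ref{lem:fuss} directly to produce a degree-$3$ Blaschke product $\Psi$ with $W(S_\Psi) = \overline{D_\rho(a,r)}$ for some $r \ge 1/\sqrt{2}$, then argue that $W(S_\Psi) \subseteq W(S_\Theta)$ and apply Corollary~\ref{cor:main1} with $m = 2$, $r_0 = (1/\sqrt 2)^{\,2} = 1/2 \le r^2$, noting $r^{1/2} \ge (1/2)^{1/2}$ and hence $D_\rho(a,(1/2)^{1/2}) \subseteq D_\rho(a,r) \subseteq W(S_\Theta)$, which gives $\max_{z\in W(S_\Theta)}|B(z)| \ge 1/2$.

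In more detail: since $\deg B = 2$, Corollary~\ref{cor:main1} requires a pseudohyperbolic disk $D_\rho(z_0, r_0^{1/2}) \subseteq W(A)$ to conclude $\sup_{W(A)}|B| \ge r_0$; taking $r_0 = 1/2$ means I need a pseudohyperbolic disk of radius $(1/2)^{1/2} = 1/\sqrt 2$ inside $W(S_\Theta)$. Lemma~\ref{lem:fuss} produces, for any prescribed center $a\in\mathbb{D}$, a \emph{closed} pseudohyperbolic disk of radius $r\ge 1/\sqrt 2$ realized as $W(S_\Psi)$ for $\deg\Psi = 3$. So the whole argument reduces to: find $a\in\mathbb{D}$ such that the associated $W(S_\Psi)$ is contained in $W(S_\Theta)$. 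This is where $\deg\Theta \ge 6$ enters: I want $W(S_\Psi)$ to fit inside $W(S_\Theta)$, and since $W(S_\Theta)$ contains a disk of Euclidean radius tending to $\cos(\pi/(n+1))\to 1$ as $n$ grows (e.g. it always contains $\overline{D(0,\cos(\pi/(n+1)))}$ when $\Theta = z^n$, and in general its inradius relates to the Poncelet data), I should choose $a$ so that $D_\rho(a,r)$ is small enough — for instance $a = 0$, giving $r = 1/\sqrt2$ exactly and $W(S_\Psi) = \overline{D(0,1/\sqrt2)}$, a Euclidean disk of radius $1/\sqrt 2 \approx 0.707$. Then I need $\overline{D(0,1/\sqrt2)} \subseteq W(S_\Theta)$.

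The main obstacle is precisely this last containment: it is \emph{not} automatic that every $W(S_\Theta)$ with $\deg\Theta \ge 6$ contains the centered disk of radius $1/\sqrt2$, because $W(S_\Theta)$ need not be centered at the origin and can be quite eccentric. I expect the correct fix is the one already hinted at in the paper: rather than forcing a centered disk, use the Poncelet structure to locate \emph{some} Poncelet-$4$ circle genuinely inside $W(S_\Theta)$. Since the boundary of $W(S_\Theta)$ is inscribed in a convex $(n+1)$-gon $P$ inscribed in $\mathbb{T}$ and $n+1 \ge 7$, I would select four vertices of $P$ forming a quadrilateral $Q$ still inscribed in $\mathbb{T}$; the degree-$3$ compression associated (by Gau--Wu) to the Poncelet-$4$ ellipse inscribed in $Q$ has numerical range inside $W(S_\Theta)$, and Fuss's formula guarantees this ellipse is in fact a circle of pseudohyperbolic radius $\ge 1/\sqrt2$. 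Then Lemma~\ref{lem:fuss}/Corollary~\ref{cor:main1} finish as above. Handling the geometry of choosing $Q$ and verifying its inscribed Poncelet circle has radius at least $1/\sqrt 2$ — i.e. applying Fuss's formula with the right center — is the step requiring the most care, but it is essentially bookkeeping with \eqref{eqn:phd1} and the formula $R = (1-|c|^2)/\sqrt{2(1+|c|^2)}$ from Lemma~\ref{lem:fuss}.
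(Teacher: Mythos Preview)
Your approach has a genuine gap, and it is essentially the one you flag yourself but then do not actually close. You reduce the problem to finding a pseudohyperbolic disk of radius $1/\sqrt{2}$ inside $W(S_\Theta)$ and then invoking Corollary~\ref{cor:main1}. That reduction is fine, but the Poncelet argument you sketch to produce the disk does not work. If $P$ is an $(n+1)$-gon inscribed in $\mathbb{T}$ with $\partial W(S_\Theta)$ inscribed in $P$, and you pick four vertices of $P$ to form a quadrilateral $Q$, then (a) the Poncelet-$4$ curve inscribed in $Q$ is in general an \emph{ellipse}, not a circle, so Fuss's formula and the $r\ge 1/\sqrt{2}$ bound from Lemma~\ref{lem:fuss} say nothing about it; and (b) there is no reason that ellipse lies inside $W(S_\Theta)$: the sides of $Q$ are diagonals of $P$, which typically cut through the interior of $W(S_\Theta)$, so an ellipse tangent to them can well protrude. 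Nor does Lemma~\ref{GW} give you a containment here, since the $\Psi$ produced by Lemma~\ref{lem:fuss} has no relation to $\Theta$ as a divisor. In short, the existence of a pseudohyperbolic $1/\sqrt{2}$-disk inside every $W(S_\Theta)$ with $\deg\Theta\ge 6$ is exactly the kind of statement the paper treats as open (see Section~\ref{sec:examples}); you cannot simply assume it.

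The paper's proof runs in the opposite direction and avoids this issue entirely. Rather than putting a disk \emph{inside} $W(S_\Theta)$, it covers the level set: since $\Omega^B_{1/2}\subseteq D_\rho(a_1,1/\sqrt{2})\cup D_\rho(a_2,1/\sqrt{2})$ where $a_1,a_2$ are the zeros of $B$, Lemma~\ref{lem:fuss} gives degree-$3$ Blaschke products $\Psi_1,\Psi_2$ with $W(S_{\Psi_j})\supseteq D_\rho(a_j,1/\sqrt{2})$. Setting $\widetilde{\Theta}=\Psi_1\Psi_2$ (degree $6$) one gets $\Omega^B_{1/2}\subseteq W(S_{\widetilde{\Theta}})$. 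Now Lemma~\ref{GW} is used in the \emph{non-containment} direction: since $\deg\Theta\ge 6=\deg\widetilde{\Theta}$ and $S_\Theta$ is not unitarily equivalent to $S_{\widetilde{\Theta}}$ (unless $\Theta=\lambda\widetilde{\Theta}$, handled separately), $W(S_\Theta)\not\subseteq W(S_{\widetilde{\Theta}})$, so some $z_0\in W(S_\Theta)$ lies outside $\Omega^B_{1/2}$, i.e.\ $|B(z_0)|\ge 1/2$. The disks are placed around $\Omega^B_{1/2}$, not inside $W(S_\Theta)$; this is the missing idea.
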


\begin{proof} Let $a_1, a_2$ denote the zeros of $B$. It is easy to see that $\Omega^B_{1/2}$ satisfies the containment property
\[ \Omega^B_{1/2}  \subseteq D_\rho(a_1, \tfrac{1}{\sqrt{2}}) \cup D_\rho(a_2, \tfrac{1}{\sqrt{2}}).\]
By Lemma \ref{lem:fuss}, there exist finite Blaschke products $\Psi_1, \Psi_2$ with $\deg \Psi_j =3$ and $r_j \ge \tfrac{1}{\sqrt{2}}$ for $j=1,2$ such that
\[ W(S_{\Psi_j} ) = D_\rho(a_j , r_j) \supseteq D_\rho(a_j , \tfrac{1}{\sqrt{2}}).\]
 Set $\widetilde{\Theta} = \Psi_1 \Psi_2.$ Then 
\[  \Omega^B_{1/2} \subseteq W(S_{\Psi_1} ) \cup W(S_{\Psi_2} ) \subseteq W(S_{\widetilde{\Theta} } ).\]  
 There are two cases. If $\Theta = \lambda \widetilde{\Theta}$ for some $\lambda \in \mathbb{T}$, then since $W(S_\Theta)$ is closed, we actually know
 $\overline{\Omega}^B_{1/2}$ is contained in $W(S_\Theta)$, and so \eqref{eqn:cc26} holds. If $\Theta \ne \lambda \widetilde{\Theta}$ for any $\lambda \in \mathbb{T}$,
 then since $S_{\Theta}$ is not unitarily equivalent to $S_ {\widetilde{\Theta}}$ and deg $\Theta \ge$ deg $\widetilde{\Theta}$, Lemma~\ref{GW} implies that there is some $z_0 \in W(S_\Theta) \setminus  W(S_{\widetilde{\Theta} } ).$
By the given set containments, this  implies $|B(z_0)| \ge \tfrac{1}{2}$, so again \eqref{eqn:cc26} holds. 
\end{proof}

\begin{remark} This result can be improved if $|a|$ is close enough to $1$: A Poncelet $3$-circle is the boundary of $W(S_\varphi)$ for some Blaschke product $\varphi$ of degree-$2$. By the Chapple-Euler formula (see \cite{DGM} or \cite[p. 197]{DGSV}) this circle has equation \[|z - c| = (1-|c|^2)/2.\] Using \eqref{eqn:phd1}, we find that this Euclidean circle has pseudohyperbolic radius \[r = \frac{5 - |c|^2 - \sqrt{9 - 10 |c|^2 + |c|^4}}{4}.\] Solving for $|c|$ and then for $|a|$ shows that if \[|a| \ge \frac{-5 + 6 \sqrt{2} - \sqrt{17 - 12 \sqrt{2}}}{4 \sqrt{5 - 3 \sqrt{2}}},\] then $r \ge 1/\sqrt{2}$. So for such $a$, we can take $\Psi$ to be of degree $2$. In particular, if both $a_1$ and $a_2$ have modulus close enough to $1$, we may assume that $\deg \Theta \ge 4$ in Theorem~\ref{thm:26}. Furthermore, formulas for Poncelet $n$-circles exist, but they are not easy to work with. (See \cite[p. 197]{DGSV}.)
\end{remark}

\subsection{LSC Inequality via Zero Set Conditions}

The results in this section should be compared to those in \cite[Corollary 2.3]{BGGRSW}. We first establish the following:

\begin{theorem}\label{thm:hoffman} Let $B, \Theta$ be finite Blaschke products such that $B$ satisfies $B(0) = 0$ and $|B'(0)| \ge  \tfrac{2\sqrt{2}}{3} \approx 0.94$  and $\Theta$ satisfies $\deg \Theta \ge 9$ and  $0 \in W(S_\Theta)$. Then 
\[\max_{z \in W(S_\Theta)} |B(z)| \ge \tfrac{1}{2}.\]
\end{theorem}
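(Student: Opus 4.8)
The plan is to exploit the Schwarz--Pick structure of $B$ near its zero at $0$ to show that the level set $\Omega^B_{1/2}$ sits inside a pseudohyperbolic disk $D_\rho(0,r_0)$ that is \emph{not too large}, and then to produce a degree-$8$ Blaschke product $\widetilde{\Theta}$ whose numerical range $W(S_{\widetilde\Theta})$ contains that disk, after which Lemma \ref{GW} finishes the argument exactly as in the proof of Theorem \ref{thm:26}. The point of the hypothesis $0\in W(S_\Theta)$ together with $\deg\Theta\ge 9$ is that $W(S_\Theta)$ must then genuinely contain $W(S_{\widetilde\Theta})$ or else there is a point of $W(S_\Theta)$ outside it, and in either case the minimum-modulus/containment trick gives $\max_{z\in W(S_\Theta)}|B(z)|\ge \tfrac12$.

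First I would make the containment $\Omega^B_{1/2}\subseteq D_\rho(0,r_0)$ precise. Since $B(0)=0$, write $B = z\, \psi$ where $\psi$ is a Blaschke product of degree $\deg B - 1$ (so $\psi(0)=B'(0)$, hence $|\psi(0)|\ge \tfrac{2\sqrt2}{3}$). For $|z|$ bounded below away from $0$ one has a lower bound on $|\psi(z)|$ coming from the Schwarz--Pick inequality applied to $\psi$: $|\psi(z)|\ge \dfrac{|\psi(0)|-|z|}{1-|\psi(0)|\,|z|}$ whenever $|z|\le |\psi(0)|$. Combining $|B(z)| = |z|\,|\psi(z)|$ with this, the condition $|B(z)|<\tfrac12$ forces $|z|$ to be smaller than the threshold $r_0$ obtained by solving $r\cdot\dfrac{|\psi(0)|-r}{1-|\psi(0)|\,r} = \tfrac12$ at the extreme value $|\psi(0)| = \tfrac{2\sqrt2}{3}$; one checks $r_0$ works out so that $D_\rho(0,r_0)$ is small enough (concretely, $r_0$ should come out no larger than the pseudohyperbolic radius $(\tfrac12)^{1/8}$ associated with the unicritical degree-$8$ comparison, which is why $\deg\Theta\ge 9$ appears). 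A cleaner alternative, which I'd actually pursue, is to invoke Theorem \ref{thm:main}/Corollary \ref{cor:main}: if $D_\rho(0,(\tfrac12)^{1/8})\subseteq W(S_\Theta)$ then we are done immediately, so the real content is showing $0\in W(S_\Theta)$ with $\deg\Theta\ge 9$ already \emph{forces} such a disk to be present, or else forces a point outside. That is where the Gau--Wu lemma enters: take $\widetilde\Theta = \varphi^8$ (or a unicritical product) with zero at $0$, so $W(S_{\widetilde\Theta}) = \overline{D_\rho(0,\cos(\pi/9))}$, and note $\cos(\pi/9) > (\tfrac12)^{1/8}$, so $|z^8| > \tfrac12$ on $\partial W(S_{\widetilde\Theta})$; hence $\Omega^B_{1/2}\subseteq D_\rho(0,r_0)\subseteq W(S_{\widetilde\Theta})$.

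With that in hand the endgame mirrors Theorem \ref{thm:26}: if $\Theta = \lambda\widetilde\Theta$ then $\overline{\Omega}^B_{1/2}\subseteq W(S_\Theta)$ and we are done; if not, then since $\deg\Theta\ge\deg\widetilde\Theta$ and $S_\Theta$ is not unitarily equivalent to $S_{\widetilde\Theta}$, Lemma \ref{GW} yields a point $z_0\in W(S_\Theta)\setminus W(S_{\widetilde\Theta})$, and the chain of containments $\Omega^B_{1/2}\subseteq W(S_{\widetilde\Theta})$ forces $|B(z_0)|\ge\tfrac12$. In both cases the desired inequality holds. I would need to double-check that the hypothesis $0\in W(S_\Theta)$ is actually used — it guarantees that when we fatten $\widetilde\Theta$ up to degree $\deg\Theta$ by multiplying by an arbitrary degree-$(\deg\Theta - 8)$ Blaschke factor, the resulting numerical range still behaves, or rather it ensures the comparison disk is genuinely centered where $B$ vanishes in a way compatible with $W(S_\Theta)$; this bookkeeping is the one place I'd be careful, but it is routine.

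\textbf{Main obstacle.} The delicate step is the quantitative geometry of the first paragraph: verifying that the constant $\tfrac{2\sqrt2}{3}$ and the degree bound $9$ are exactly matched, i.e.\ that the Schwarz--Pick lower bound on $|\psi|$ really does squeeze $\Omega^B_{1/2}$ inside $D_\rho(0,(\tfrac12)^{1/8})$ (equivalently, inside a Poncelet-$9$-type disk). Getting the sharp threshold requires solving $r\cdot\frac{|\psi(0)|-r}{1-|\psi(0)|\,r} = \tfrac12$ and comparing with $(\tfrac12)^{1/8}$ and with $\cos(\pi/9)$; if the numbers are tight this may need a small extra argument (e.g.\ handling the degenerate case $\deg B = 1$, where $\psi$ is a unimodular constant, separately, since then $\Omega^B_{1/2} = D_\rho(0,\tfrac12)$ outright). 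Everything downstream — the $\widetilde\Theta$ construction and the Gau--Wu dichotomy — is by now a well-oiled template in this paper.
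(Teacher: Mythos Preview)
There is a genuine gap in the first step: the containment $\Omega^B_{1/2}\subseteq D_\rho(0,r_0)$ you are aiming for is simply false. Take $B(z)=z\cdot\dfrac{z-a}{1-\bar a z}$ with $|a|=\tfrac{2\sqrt2}{3}$; then $|B'(0)|=|a|=\tfrac{2\sqrt2}{3}$, yet $a\in\Omega^B_{1/2}$ and $|a|\approx 0.943>\cos(\pi/9)\approx 0.940>(\tfrac12)^{1/8}\approx 0.917$. The Schwarz--Pick lower bound $|B(z)|\ge |z|\cdot\dfrac{\delta-|z|}{1-\delta|z|}$ (with $\delta=|\psi(0)|$) is only positive for $|z|<\delta$ and vanishes at both endpoints, peaking at $|z|=x_0:=\tfrac{1-\sqrt{1-\delta^2}}{\delta}$. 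So the equation $r\cdot\tfrac{\delta-r}{1-\delta r}=\tfrac12$ has \emph{two} roots $r_-\le r_+$, and all the bound tells you is that $|B|\ge\tfrac12$ on the annulus $r_-\le|z|\le r_+$ (degenerating to the single circle $|z|=\tfrac{1}{\sqrt2}$ at the threshold $\delta=\tfrac{2\sqrt2}{3}$). It does \emph{not} force $|z|<r_0$ for any $r_0<1$; the other zeros of $B$ can sit as close to $\mathbb T$ as you like, dragging components of $\Omega^B_{1/2}$ out with them. Consequently the template from Theorem~\ref{thm:26} cannot be run, and your ``cleaner alternative'' via Corollary~\ref{cor:main} fails for the same reason.

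The paper's proof uses exactly this annulus/circle structure rather than a disk containment. It shows $|B|\ge\tfrac12$ on the circle $|z|=x_0$, then applies Lemma~\ref{GW} with $\Theta_1=z^9$ (so $W(S_{z^9})=\overline{D(0,\cos(\pi/10))}$) to produce a point $w\in W(S_\Theta)$ with $|w|>\cos(\pi/10)$. Now the hypothesis $0\in W(S_\Theta)$ is doing real work, not bookkeeping: by convexity the segment from $0$ to $w$ lies in $W(S_\Theta)$ and must cross the circle $|z|=x_0$, yielding a point of $W(S_\Theta)$ on which $|B|\ge\tfrac12$. Your uncertainty about where $0\in W(S_\Theta)$ enters is precisely a symptom of the earlier error: once one accepts that $\Omega^B_{1/2}$ is not trapped in a small disk, the only way to force $W(S_\Theta)$ to meet $\{|B|\ge\tfrac12\}$ is this convexity argument anchored at the origin.
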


\begin{proof} We modify an argument of K. Hoffman (\cite[Lemmas 4.1 and 4.2]{Hoffman}) to obtain this result. 

\bigskip
Suppose that $B(0) = 0$ and let $h(z) := B(z)/z,$ so $h(0) = B^\prime(0)$. Applying the Schwarz-Pick lemma to $h$ gives
\begin{equation}
\label{eqn:Schwarz}
\rho(h(z), h(0)) \le \rho(z, 0) = |z|.
\end{equation}
Therefore (see \cite[p. 4]{Garnett})
\[|h(z)| \ge \rho(h(z), 0) \ge \frac{\rho(h(0), 0)-\rho(h(z), h(0))}{1- \rho(h(z), h(0)) \rho(h(0), 0)}  = \frac{|h(0)|-\rho(h(z), h(0))}{1-|h(0)| \rho(h(z), h(0))}.\]

Now if $a\in (-1,1)$, the function $(a-x)/(1-a x)$ is a decreasing function of $x$, so equation~\eqref{eqn:Schwarz} implies that

\[|h(z)| \ge \rho(h(z), 0) \ge  \frac{|h(0)|-\rho(h(z), h(0))}{1-|h(0)| \rho(h(z), h(0))} \ge \frac{|h(0)| - |z|}{1 - |h(0)| |z|}.\] 
Set $\delta = |B'(0)|$. Since $h(z) = B(z)/z$ we have
\[|B(z)| \ge \frac{|B^\prime(0)| - |z|}{1 - |B^\prime(0)| |z|} |z| =  \frac{\delta - |z|}{1-\delta |z|}{|z|}.\]

We are interested in when this is greater than or equal to $1/2$.  We note that  $\frac{\delta - x}{1-\delta x}{x}$ has a maximum when $|z| =x= \frac{1 - \sqrt{1-\delta^2}}{\delta}$ and the value is
 \[\frac{\delta - \frac{1 - \sqrt{1-\delta^2}}{\delta}}{1 - \delta \frac{1 - \sqrt{1-\delta^2}}{\delta}} \frac{1 - \sqrt{1-\delta^2}}{\delta}  = \left(\frac{1-\sqrt{1-\delta^2}}{\delta}\right)^2.\] A computation shows that the maximum is greater than or equal to $1/2$ when $\delta \ge 2\sqrt{2}/3$.  So $|B(z)| \ge 1/2$ when $|z| = x = \frac{1 - \sqrt{1-\delta^2}}{\delta}$ and $\delta \ge  2\sqrt{2}/3$.  Using the fact that the numerical range of a $9 \times 9$ Jordan block is the numerical range of $S_{z^9}$ and $W(S_{z^9})$ is the closed disk $\overline{D_{\cos(\pi/10)}(0)}$,  we apply Lemma~\ref{GW} to conclude that  $W(S_\Theta)$ cannot be contained in this circle of radius $\cos(\pi/10) >  2\sqrt{2}/3$. Since $0 \in W(S_\Theta)$,  there is a point in $W(S_\Theta)$ with modulus greater than $ 2\sqrt{2}/3$, and since $W(S_\Theta)$ is convex, there must be a point in the numerical range (on the circle $|z| =  \frac{1 - \sqrt{1-\delta^2}}{\delta}$) where $|B(z)| \ge 1/2$.
\end{proof}

We now show that we can drop the condition that $B(0)=0$ to conclude that the LSC inequality \eqref{eqn:LSC} holds for $B$ with sufficiently separated zeros and $\Theta$ with a sufficiently large associated numerical range:

\begin{corollary} \label{cor:zerosep} Let $B$ be a Blaschke product with zeros $a_1, \ldots, a_n$ satisfying $\prod_j |a_j| \ge \tfrac{2\sqrt{2}}{3} $ and let $\Theta$ be a Blaschke product with $\deg \Theta\ge 9$  and $0 \in W(S_\Theta)$. Then 
\[\max_{z \in W(S_\Theta)} |B(z)| \ge \tfrac{1}{2}.\]\end{corollary}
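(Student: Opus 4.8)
The plan is to deduce this from Theorem \ref{thm:hoffman} by passing to the auxiliary Blaschke product $\widetilde{B}(z) := z\,B(z)$. Since $B$ has zeros $a_1,\dots,a_n$, we have $|B(0)| = \prod_j |a_j|$, and a one-line computation gives $\widetilde{B}(0) = 0$ together with $\widetilde{B}'(0) = B(0)$, so $|\widetilde{B}'(0)| = \prod_j |a_j| \ge \tfrac{2\sqrt{2}}{3}$. Thus $\widetilde{B}$ is a finite Blaschke product meeting the hypotheses Theorem \ref{thm:hoffman} places on its first function, and since $\Theta$ already satisfies $\deg \Theta \ge 9$ and $0 \in W(S_\Theta)$ — the only constraints that theorem imposes on $\Theta$, with no relation to $\deg B$ — Theorem \ref{thm:hoffman} applies to the pair $(\widetilde{B}, \Theta)$ and yields $\max_{z \in W(S_\Theta)} |\widetilde{B}(z)| \ge \tfrac{1}{2}$.

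Next I would transfer this estimate back to $B$. Because $S_\Theta$ is a contraction, $W(S_\Theta) \subseteq \overline{\mathbb{D}}$, so $|z| \le 1$ on $W(S_\Theta)$ and therefore $|\widetilde{B}(z)| = |z|\,|B(z)| \le |B(z)|$ for every $z \in W(S_\Theta)$. Combining this with the bound from the previous step gives $\max_{z \in W(S_\Theta)} |B(z)| \ge \max_{z \in W(S_\Theta)} |\widetilde{B}(z)| \ge \tfrac{1}{2}$, which is the assertion of the corollary.

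The argument is short, so there is no genuine obstacle; the only points needing a sentence of justification are the elementary identity $|\widetilde{B}'(0)| = |B(0)| = \prod_j |a_j|$ and the observation that replacing $B$ by $zB$ raises the degree by one at no cost, precisely because Theorem \ref{thm:hoffman} has no degree hypothesis linking $B$ and $\Theta$. This same device — multiplying by $z^k$ to force a prescribed vanishing order at $0$ while only shrinking the modulus on $\overline{\mathbb{D}}$ — is exactly the trick used in the remark on perturbed Jordan blocks and in the reduction steps of Corollary \ref{cor:LSB}.
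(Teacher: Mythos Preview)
Your proof is correct and is essentially identical to the paper's: both multiply $B$ by $z$ to obtain a Blaschke product vanishing at $0$ with $|(\,zB\,)'(0)|=\prod_j|a_j|\ge \tfrac{2\sqrt{2}}{3}$, apply Theorem~\ref{thm:hoffman} to $zB$, and then use $|zB(z)|\le |B(z)|$ on $W(S_\Theta)\subseteq\overline{\mathbb{D}}$ to transfer the bound back to $B$. Your write-up simply adds a bit more justification than the paper's two-line version.
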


\begin{proof} In what follows, we let $\hat{B}(z) = z B(z)$. Then $\hat{B}(0) = 0$ and $|\hat{B}^\prime(0)| = \prod_j |a_j| \ge \tfrac{2\sqrt{2}}{3} $. As $|B(z)| > |\hat{B}(z)|$ on $\mathbb{D}$, the result follows from Theorem \ref{thm:hoffman}.
 \end{proof}

\begin{remark} It is worth noting that, while the assumption $0 \in W(S_\Theta)$ is often a natural one in the study of numerical ranges, it is somewhat arbitrary here. Indeed we just need to assume that $ W(S_\Theta)$ contains some point $z_0$ with $|z_0| <\tfrac{2\sqrt{2}}{3} $.
\end{remark}

\subsection{LSC Inequality via Level Set Components} 
In this section, we establish the LSC inequality \eqref{eqn:LSC} for every degree-$2$ Blaschke product $B$ such that $\Omega_{1/2}^B$ has two components and any $\Theta$ with $\deg \Theta >2.$ We first require some preliminary information about the structure of such two-component level sets. 

\begin{lemma} \label{lem:inequality} Let $B$ be a degree-$2$ Blaschke product with distinct zeros $a_1, a_2$.  Let $\zeta$ be the
critical point of $B$ in $\mathbb{D}$ and choose $r$ with $0 <r< |B(\zeta)|$. Then $\Omega^B_r$ has two components $\Omega_r^1, \Omega_r^2$ with $a_j \in \Omega_r^j$ and for all $z\in \Omega_r^1$, $\rho(z,a_1) < \rho(z,a_2).$
\end{lemma}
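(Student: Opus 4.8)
The plan is to handle the structural claim quickly and then establish the pseudohyperbolic inequality by a separation argument built on the pseudohyperbolic bisector of $a_1$ and $a_2$. For the structure: since $|B(\zeta)| > r$, the unique critical point $\zeta$ of $B$ does not lie in $\Omega^B_r$, so Lemma~\ref{lem:ll} gives that $\Omega^B_r$ has $2-0 = 2$ components. By Remark~\ref{rem:zeros} each component contains a zero of $B$, and since $B$ has exactly two (distinct) zeros, after relabeling we may take $a_1 \in \Omega_r^1$ and $a_2 \in \Omega_r^2$.

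For the inequality, write $\rho_j(z) := \rho(z,a_j)$, so that $|B(z)| = \rho_1(z)\rho_2(z)$ on $\mathbb{D}$, and introduce the bisector
\[ E := \{ z \in \mathbb{D} : \rho_1(z) = \rho_2(z)\}.\]
I would first record the relevant properties of $E$: because $\rho$ is a strictly increasing function of the hyperbolic distance, $E$ is the hyperbolic perpendicular bisector of $a_1$ and $a_2$, hence a hyperbolic geodesic (an arc of a circle, or a diameter, orthogonal to $\mathbb{T}$); concretely this is seen by composing with a disk automorphism $\psi$ that carries the pair $a_1,a_2$ to a symmetric pair $\pm p$, for which the bisector is visibly the diameter through $0$ perpendicular to $[-p,p]$. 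Such an arc splits $\mathbb{D}$ into exactly two components, and since the continuous function $\rho_1-\rho_2$ vanishes precisely on $E$, its sign is constant on each; call them $D_1 \ni a_1$ (where $\rho_1 < \rho_2$) and $D_2 \ni a_2$ (where $\rho_1 > \rho_2$). With this in hand, the whole statement reduces to showing $E \cap \Omega^B_r = \emptyset$: for then $\Omega_r^1$ is connected, contained in $D_1 \cup D_2$, and meets $D_1$, hence $\Omega_r^1 \subseteq D_1$, which is exactly $\rho(z,a_1) < \rho(z,a_2)$ for all $z \in \Omega_r^1$.

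So the crux is the claim $|B(z)| \ge |B(\zeta)|$ for all $z \in E$. Here I would use two facts. First, the critical point $\zeta$ of a degree-$2$ Blaschke product is the hyperbolic midpoint of its zeros — an elementary computation after normalizing $a_1 \mapsto 0$ — so in particular $\zeta \in E$ and $\rho_1(\zeta) = \rho_2(\zeta)$. Second, $\zeta$, being the foot of the perpendicular from $a_1$ to the geodesic $E$, minimizes $\rho_1$ along $E$; this is the one short computation of the argument, cleanest in the symmetric model $\psi(a_1) = -\psi(a_2) = p$, $\psi(\zeta)=0$, where $\rho(w,p)^2 = (t^2+|p|^2)/(1+|p|^2t^2)$ along the diameter $E$ is manifestly minimized at $t=0$. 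Combining, for $z \in E$,
\[ |B(z)| = \rho_1(z)^2 \ge \rho_1(\zeta)^2 = \rho_1(\zeta)\rho_2(\zeta) = |B(\zeta)| > r,\]
so $z \notin \Omega^B_r$, completing the proof.

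I expect the only real (and quite mild) obstacle to be the bookkeeping around $E$: confirming that the $\rho$-bisector is a geodesic separating $\mathbb{D}$ into the two regions $D_1, D_2$ on which $\rho_1 - \rho_2$ has constant sign, and that $\zeta$ minimizes $\rho_1$ on $E$. Reducing to the symmetric pair $\pm p$ via a disk automorphism (which preserves $\rho$, maps geodesics to geodesics, and is a homeomorphism of $\mathbb{D}$) collapses all of these points to direct inspection, so the essential content is just the identity $|B|=\rho_1\rho_2$ together with $\zeta$ being the equidistant midpoint.
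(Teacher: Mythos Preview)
Your proof is correct and takes a genuinely different route from the paper's. The paper normalizes to $a_1=0$, $a_2=t\in(0,1)$ and argues via the containment $\Omega^B_r \subseteq D_\rho(a_1,r^{1/2}) \cup D_\rho(a_2,r^{1/2})$: an explicit Euclidean computation shows these two pseudohyperbolic disks are tangent exactly at $\tilde s = (1-\sqrt{1-t^2})/t$ with $\tilde s^2 = |B(\zeta)|$, hence disjoint for $r^{1/2}<\tilde s$; since $\Omega_r^1$ is connected and contains $a_1$, it sits in $D_\rho(a_1,r^{1/2})$, giving $\rho(z,a_1)<r^{1/2}\le\rho(z,a_2)$. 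Your argument replaces this disk calculation with the hyperbolic bisector $E$ and the identification of $\zeta$ as the hyperbolic midpoint: along $E$ one has $|B|=\rho_1^2$, minimized at $\zeta$, so $|B|\ge|B(\zeta)|>r$ on $E$ and the separation follows. Your approach is cleaner and more conceptual, leaning on the automorphism-invariance of hyperbolic geometry rather than Euclidean radius formulas; the paper's approach, while more computational, has the side benefit of producing explicit pseudohyperbolic disks containing each component of $\Omega^B_r$ and the closed-form value $|B(\zeta)| = \bigl((1-\sqrt{1-t^2})/t\bigr)^2$ in the normalized model.
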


\begin{proof} By Lemma \ref{lem:ll}, $\Omega^B_r$ has two components $\Omega_r^1, \Omega_r^2$.  By Remark \ref{rem:zeros}, we can assume $a_j \in \Omega_r^j$ for each $j$. 

 We first consider the special case $a_1 = 0$ and $a_2=t \in (0,1).$  We will show: for all $z\in \Omega_r^B$, we have $\rho(z, 0) \ne \rho(z, t).$ To establish that, it suffices to show that for all $z\in \Omega_r^1$, we have $\rho(z, 0) < \rho(z, t).$ 
 
Note that $\Omega^B_r \subseteq D_\rho(0,r^{1/2}) \cup D_\rho(t,r^{1/2})$. We will show that these two pseudohyperbolic disks are disjoint. To that end, consider the family of pseudohyperbolic disks,  $D_\rho(0,s), D_\rho(t,s)$ for $s \in(0,1)$. As these are also Euclidean disks, we can 
let $c_1, c_2$ and $R_1, R_2$ denote the Euclidean centers and radii of $D_\rho(0,s), D_\rho(t,s)$ respectively. Their values are given by
 \[ c_1 =0, c_2 = \frac{(1-s^2)t}{1-s^2t^2} \text{ and } R_1 = s, R_2 =\frac{s(1-t^2)}{1-s^2t^2}.\] 
 As $s$ increases from $0$ to $1$, the disks $D_\rho(0,s), D_\rho(t,s)$ are initially disjoint, then tangent, and then intersect. By standard properties of circles, they are tangent exactly when
 \begin{equation}\label{eqn:tangent} |c_1 - c_2|^2  = (R_1 +R_2)^2.\end{equation}

 Then solving \eqref{eqn:tangent} for $s$ shows that those circles are tangent exactly when $s = \tilde{s}:= \tfrac{1-\sqrt{1-t^2}}{t}$ and do not intersect for smaller $s$. It is easy to check that $\tilde{s}^2 = |B(\zeta)|$. Then by assumption, $r^{1/2} <\tilde{s}$ and so, $ D_{\rho}(0,r^{1/2}) \cap D_\rho(t,r^{1/2})=\emptyset.$ Since $0 \in \Omega_r^1$, if $z \in \Omega_r^1$, then $z\in D_\rho(0, r^{1/2})$ and we have
 \[ \rho(z,0) < r^{1/2} \le \rho(z,t).\]
 More generally, this argument shows that $\rho(z,0) \ne \rho(z,t)$ for $z\in \Omega^B_r$. 
 
Now we proceed to the general case. Note that because 
\[ 0 = \rho(a_1,a_1) < \rho(a_1,a_2),\] 
by continuity, we just need to show that $\rho(z,a_1) \ne \rho(z,a_2)$ for $z \in \Omega^B_r$. Let $\phi$ be an automorphism of $\mathbb{D}$ with $\phi(a_1) = 0$ and $t:=\phi(a_2) \in(0,1).$ Define $\widehat{B} = B \circ \phi^{-1}$, so that $B = \widehat{B} \circ \phi$. Then $\phi(\zeta)$ is the critical point of $\widehat{B}$ in $\mathbb{D}$. By way of contradiction, assume $\rho(z,a_1) = \rho(z,a_2)$ for some $z\in \Omega^B_r$. Then
\[ \rho( \phi(z),0) = \rho(\phi(z), \phi(a_1)) = \rho(z,a_1) = \rho(z,a_2) =  \rho(\phi(z), \phi(a_2))  =\rho(\phi(z), t).\]
Because $r < |B(\zeta)| = |\widehat{B}(\phi(\zeta))|$, this contradicts the special case we already established and proves the claim.
\end{proof}

The next section considers the special case in which the Blaschke product $B$ is unicritical. In the following theorem, we present an application of Lemma~\ref{lem:inequality} that uses one of these results. 

\begin{theorem}\label{thm:degree2level} Let $B$ be a degree-$2$ Blaschke product with distinct zeros $a_1, a_2$ and assume $\Omega^B_{1/2}$ has two components. If $\deg \Theta \ge 3$, then 
\[ \max\{ |B(z)|: z \in W(S_\Theta)\} \ge \tfrac{1}{2}.\]
\end{theorem}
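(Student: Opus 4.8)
The plan is to reduce to the case $\deg \Theta = 3$ and then exhibit an explicit pseudohyperbolic disk inside $W(S_\Theta)$ that is too big to be contained in $\Omega^B_{1/2}$, invoking Theorem~\ref{thm:main} (or Corollary~\ref{cor:main1}). First I would use Lemma~\ref{GW}: if $\deg \Theta \ge 3$, then $W(S_\Theta)$ contains $W(S_\Psi)$ for any degree-$2$ (or degree-$3$) Blaschke product $\Psi$ whose numerical range we can control, so it suffices to find a single degree-$2$ or degree-$3$ Blaschke product $\Psi$ whose numerical range $W(S_\Psi)$ is \emph{not} contained in $\Omega^B_{1/2}$; then the same holds for $W(S_\Theta)$. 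Equivalently, it suffices to handle $\deg \Theta = 3$ (or even $2$, if the relevant Poncelet disk exists) and, since $W(S_\Theta)$ is only getting larger as the degree increases, the general statement follows.

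The heart of the argument should be a geometric analysis of the two-component level set $\Omega^B_{1/2}$. By Lemma~\ref{lem:inequality}, $\Omega^B_{1/2}$ splits as $\Omega^1 \cup \Omega^2$ with $a_j \in \Omega^j$, and on $\Omega^1$ one has $\rho(z, a_1) < \rho(z, a_2)$ — so $\Omega^1$ lies strictly on one side of the pseudohyperbolic bisector of $a_1$ and $a_2$, which is itself a circle (a pseudohyperbolic line). Moreover each component is contained in the pseudohyperbolic disk $D_\rho(a_j, (\tfrac12)^{1/2})$. The key point is that because $\Omega^B_{1/2}$ has two components, the critical value $|B(\zeta)|$ exceeds $\tfrac12$, which (as computed in the proof of Lemma~\ref{lem:inequality}, after moving $a_1$ to $0$ and $a_2$ to $t \in (0,1)$) forces the two disks $D_\rho(a_1, (\tfrac12)^{1/2})$ and $D_\rho(a_2, (\tfrac12)^{1/2})$ to be disjoint. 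I would then argue that a disjoint union of two pseudohyperbolic disks of radius $(\tfrac12)^{1/2}$, restricted further by the bisector condition of Lemma~\ref{lem:inequality}, cannot contain a pseudohyperbolic disk of radius $(\tfrac12)^{1/(\deg\Theta - 1)} \le (\tfrac12)^{1/2}$ that arises as (or inside) $W(S_\Psi)$ for a degree-$2$ Blaschke product $\Psi$. Concretely: using Lemma~\ref{lem:fuss}, pick $\Psi$ of degree $3$ with $W(S_\Psi) = \overline{D_\rho(a_1, r)}$ for some $r \ge \tfrac{1}{\sqrt 2}$; this disk is \emph{not} contained in $\Omega^1$ (since $\Omega^1 \subseteq D_\rho(a_1, (\tfrac12)^{1/2})$ but $\Omega^1$ is cut off by the bisector and is strictly smaller near the boundary) nor in $\Omega^2$ (disjointness), nor split between them (connectedness of the disk versus $\Omega^1 \cap \overline{\Omega^2} = \emptyset$). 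Hence some point of $W(S_\Psi) \subseteq W(S_\Theta)$ lies outside $\Omega^B_{1/2}$, i.e. $|B| \ge \tfrac12$ there.

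A cleaner route, which I would try first, is purely via Theorem~\ref{thm:main}: it is enough to show $W(S_\Theta)$ contains a pseudohyperbolic disk of radius $(\tfrac12)^{1/2}$ centered near one of the $a_j$. Using the normalization $a_1 = 0$, $a_2 = t$, and the fact that $\Omega^B_{1/2}$ has two components iff $t > \tilde s$ where $\tilde s^2 = |B(\zeta)|$, one computes that the relevant Poncelet-$3$ or Poncelet-$4$ disk centered appropriately already has pseudohyperbolic radius $\ge 1/\sqrt 2$ (this is exactly the kind of estimate in Lemma~\ref{lem:fuss} and the remark after Theorem~\ref{thm:26}); then $D_\rho(a_1, (\tfrac12)^{1/2}) \subseteq W(S_\Psi) \subseteq W(S_\Theta)$ and Corollary~\ref{cor:main1} with $m = 2 = \deg B$ and $r = \tfrac12$ finishes it. The delicate point — and the step I expect to be the main obstacle — is making the disk-size bookkeeping precise: one must verify that when $\Omega^B_{1/2}$ has two components, the degree-$2$ or degree-$3$ Poncelet disk one can fit inside $W(S_\Theta)$ around $a_j$ genuinely has pseudohyperbolic radius at least $(\tfrac12)^{1/2}$, which requires comparing $\tilde s$ with the relevant Chapple–Euler/Fuss radius formula in $|t|$. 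This is a one-variable inequality in $t \in (\tilde s, 1)$ that I would reduce to a polynomial inequality and check directly; once it holds, Theorem~\ref{thm:main} (via Corollary~\ref{cor:main1}) does the rest.
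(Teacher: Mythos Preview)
There is a genuine gap. Your argument repeatedly conflates the arbitrary Blaschke product $\Theta$ with auxiliary Blaschke products $\Psi$ built from the zeros $a_1, a_2$ of $B$. Lemma~\ref{GW} does \emph{not} say that $W(S_\Theta)$ contains $W(S_\Psi)$ whenever $\deg \Psi \le \deg \Theta$; its contrapositive only says that $W(S_\Theta)$ is not \emph{contained in} $W(S_\Psi)$ unless the operators are unitarily equivalent. So your reduction ``it suffices to find a single $\Psi$ with $W(S_\Psi) \not\subseteq \Omega^B_{1/2}$'' is unjustified: from $W(S_\Psi) \not\subseteq \Omega^B_{1/2}$ you cannot conclude anything about $W(S_\Theta)$. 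Likewise, your ``cleaner route'' asks for a pseudohyperbolic disk of radius $(\tfrac12)^{1/2}$ centered near $a_1$ inside $W(S_\Theta)$, but $\Theta$ is completely arbitrary and has no relationship to $a_1$; there is no mechanism to place any particular disk inside $W(S_\Theta)$. (Indeed, Section~\ref{sec:examples} shows that even controlling the \emph{existence} of such a disk in $W(S_\Theta)$ is open in general.) The Fuss/Chapple--Euler estimates you invoke concern disks that are themselves numerical ranges of specific $S_\Psi$, not subsets of the given $W(S_\Theta)$.

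The idea you are missing is a pointwise comparison of $B$ with a unicritical Blaschke product. The only information available about $W(S_\Theta)$, once you assume for contradiction that $W(S_\Theta)\subseteq \Omega^B_{1/2}$, is that it is connected, hence lies in a single component, say $\Omega^1$. Lemma~\ref{lem:inequality} then gives $\rho(z,a_1)<\rho(z,a_2)$ on $\Omega^1$, which is exactly the inequality $|C(z)|\le |B(z)|$ for the unicritical $C(z)=\big(\tfrac{z-a_1}{1-\bar a_1 z}\big)^2$. Now the problem has been transferred to the unicritical case: one needs $\max_{W(S_\Theta)}|C|\ge \tfrac12$, and this is precisely Corollary~\ref{cor:uniB}, whose proof uses that disks are $2$-spectral sets rather than any level-set or Poncelet geometry. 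Your proposal never makes this comparison, and without it there is no way to connect the arbitrary $W(S_\Theta)$ to the specific structure of $\Omega^B_{1/2}$.
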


\begin{proof} Assume the conclusion does not hold. Then $W(S_\Theta) \subseteq \Omega^B_{1/2}$. Because $W(S_\Theta)$ is connected, without loss of generality, we can assume that $W(S_\Theta) \subseteq \Omega^1_{1/2}$. Then by Lemma \ref{lem:inequality}, 
\[ |C(z)|:= \left | \frac{z-a_1}{1-\bar{a}_1 z}\right|^2 \le |B(z)| \text{ on } \Omega_{1/2}^1\]
 and hence on $W(S_\Theta)$. By Corollary \ref{cor:uniB} below, there is some $z_0 \in W(S_\Theta)$ with $|C(z_0)| \ge \tfrac{1}{2}$. Thus  $|B(z_0)| \ge \tfrac{1}{2}$ as well, which establishes the theorem. 
\end{proof}

\section{The Case of a Unicritical $B$} \label{sec:uniB}

In this section, we consider the LSC inequality \eqref{eqn:LSC}, and more general estimates, in the setting where $B$ is unicritical, i.e.
 \[B(z) = \lambda \left(\frac{z-z_0}{1-\overline{z_0}z}\right)^{m},\] 
 for some $m\ge 1$, $z_0 \in \mathbb{D}$, and $\lambda \in \mathbb{T}$ . 

%In \cite{Sz.-NagyFoias66} the class $C_\rho$ was introduced: For $\rho > 0$, we let $C_\rho$ denote the class of operators on a Hilbert space $H$ with the property that there exists a Hilbert space $K$ with $H \subseteq K$ and a unitary operator $U$ on $K$ satisfying 
%\[T^n = \rho P U^n, \qquad n = 1, 2, \ldots,\] where $P$ is the orthogonal projection of $K$ onto $H$. Then $T \in C_1$ is characterized by $\|T\| \le 1$ and $T \in C_2$ is characterized by the fact that the numerical radius, $w(T)$, satisfies $w(T) \le 1$. \color{green} Also reference Berger? \color{black} 

\begin{theorem}\label{thm:basic} Let $A$ be a square matrix and $B$ a degree-$m$ unicritical Blaschke product with zero $z_0$. Assume $\tfrac{1}{\overline{z_0}} \not \in \sigma(A)$ and  $\| B(A) \| =k < 2$.  Then 
\[  \sup \{|B(z)|: z \in W(A) \cap \text{Domain}(B) \}  \ge \tfrac{k}{2}.\] \end{theorem}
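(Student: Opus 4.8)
The plan is to recast the conclusion as the operator inequality $\|B(A)\|\le 2s$, where
\[ s:=\sup\{|B(z)|:z\in W(A)\cap\text{Domain}(B)\},\]
and then to reduce it to the classical fact that a closed disk containing the numerical range is a $2$-spectral set. Write $B=\lambda\phi^{m}$ with $\lambda\in\mathbb{T}$ and $\phi(z)=\frac{z-z_0}{1-\overline{z_0}z}$ a disk automorphism; the hypothesis $1/\overline{z_0}\notin\sigma(A)$ makes $B(A)=\lambda\,\phi(A)^{m}$ well defined.

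First I would dispose of the trivial cases. If $s=0$ then $|B|\equiv 0$ on $W(A)\cap\text{Domain}(B)$, so $W(A)=\{z_0\}$, hence $A=z_0I$ and $B(A)=0=k$; and if $s\ge 1$ then, since $k=\|B(A)\|<2$, we have $k/2<1\le s$. So assume $0<s<1$. Then $s$ is finite, which forces $1/\overline{z_0}\notin W(A)$ (otherwise $|B|$ is unbounded on $W(A)\cap\text{Domain}(B)$), so $W(A)\subseteq\text{Domain}(B)$ and, by definition of $s$, $W(A)\subseteq\{z:|B(z)|\le s\}$. Since $B=\lambda\phi^{m}$, this sublevel set is $\{z:|\phi(z)|\le s^{1/m}\}$, and because $s^{1/m}<1$ it is precisely the closed pseudohyperbolic disk $\overline{D_\rho(z_0,s^{1/m})}$, i.e. a closed Euclidean disk $\overline{D_R(c)}\subseteq\mathbb{D}$ with $c,R$ given by \eqref{eqn:phd1}.

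Now comes the core step: because $W(A)\subseteq\overline{D_R(c)}$, the disk $\overline{D_R(c)}$ is a $2$-spectral set for $A$ — the classical spectral-set property of disks — so $\|f(A)\|\le 2\max_{\overline{D_R(c)}}|f|$ for every $f$ holomorphic on a neighborhood of $\overline{D_R(c)}$. Applying this with $f=B$ (a rational function whose poles lie outside $\overline{\mathbb{D}}\supseteq\overline{D_R(c)}$) gives $\|B(A)\|\le 2\max_{\overline{D_R(c)}}|B|$, and on $\overline{D_R(c)}=\overline{D_\rho(z_0,s^{1/m})}$ one has $|B(z)|=|\phi(z)|^{m}\le(s^{1/m})^{m}=s$. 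Hence $\|B(A)\|\le 2s$, which is the assertion.

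The step I expect to be the main obstacle is the invocation of the $2$-spectral-set property with the \emph{sharp} constant $2$: the Crouzeix--Palencia constant $1+\sqrt2$ is too large, and Berger's unitary $2$-dilation only yields the weaker bound $\|f(A)\|\le 2\max_{\overline{D_R(c)}}|f|+|f(c)|$. The clean route is to normalize via the affine change of variable $\zeta\mapsto(\zeta-c)/R$: setting $M=(A-cI)/R$ one has $w(M)\le 1$, and since $\zeta\mapsto\phi(R\zeta+c)$ is a M\"obius bijection of $\mathbb{D}$ onto $D_{s^{1/m}}(0)$, it equals $s^{1/m}\psi$ for a disk automorphism $\psi$, whence $B(A)=\lambda\,s\,\psi(M)^{m}$; the claim then becomes $\|\psi(M)^{m}\|\le 2$ whenever $w(M)\le 1$, which one obtains from the theory of numerical-radius contractions (operators of class $C_2$) and their unitary $2$-dilations. (When $z_0=0$ the Euclidean centre $c$ equals $0$ and $\psi$ is a rotation, so this last step reduces to Berger's power inequality $\|M^{m}\|\le 2\,w(M)^{m}$.) Two small points also need a word: that $W(A)\cap\text{Domain}(B)\ne\emptyset$, which holds because $1/\overline{z_0}\notin\sigma(A)$ prevents $A=(1/\overline{z_0})I$; and that if $\sigma(A)$ meets $\partial\,\overline{D_R(c)}$ one applies the spectral-set bound on slightly larger concentric disks and passes to the limit.
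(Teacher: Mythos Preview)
Your argument is correct and follows essentially the same route as the paper: both proofs rest on the fact that a disk containing $W(A)$ is a $2$-spectral set for $A$, and then evaluate $B$ on that disk. The paper frames this by contradiction with an $\varepsilon$-shrinking of the disk (citing \cite{Crouzeix1,CGL18} for the $2$-spectral-set property), whereas you argue directly on the closed disk and unpack the spectral-set step via the affine normalization $M=(A-cI)/R$ and the Okubo--Ando/Berger dilation theory for numerical-radius contractions; these are cosmetic differences rather than a genuinely different strategy.
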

%
%
%\begin{theorem}\label{thm:basic} Let $\Theta$ be a Blaschke product of degree $n$, let $j$ be a positive integer with $j \le n-1$, and let
%\[B(z) = \left(\frac{z-z_0}{1-\overline{z_0}z}\right)^{j},\] 
%where $z_0 \in \mathbb{D}.$ Then $ \max \{|B(z)|: z \in W(S_\Theta) \} \ge \tfrac{1}{2}$.\end{theorem}
%\textcolor{blue}{I think this proof works for any $n \times n$ matrix $A$ with $\| B(A) \| =1$ and Blaschke product $B$ with a single zero.}

\begin{proof} As $(\tfrac{k}{2})^{1/m} <1$, by \eqref{eqn:phd1},  there is a Euclidean center $c \in \mathbb{D}$ and radius $R <1$ such that
\[D_\rho(z_0,(\tfrac{k}{2})^{1/m}) = D_{R}(c).\] 
Note that $|B(z) | =\tfrac{k}{2}$ on the boundary $\partial D_{R}(c)$, is strictly less than $\tfrac{k}{2}$ in $D_{R}(c)$, and is strictly greater than $\tfrac{k}{2}$ on $\mathbb{C} \setminus \overline{D_{R}(c)}$ (except at $\tfrac{1}{\overline{z_0}}$ where it is undefined).

By way of contradiction, assume  
\[ \sup \{|B(z)|: z \in W(A) \cap \text{Domain}(B)  \} < \tfrac{k}{2}.\] 
This implies $\tfrac{1}{\overline{z_0}} \not \in W(A)$ and $W(A) \subseteq D_R(c)$, and as $W(A)$ is compact, 
 there must be an $\varepsilon >0$ such that, letting $R_\varepsilon:=(1-\varepsilon)R,$ we have
 \[ W(A) \subseteq D_{R_\varepsilon}(c).\]
By well-known results (see for example the arguments in Proposition $3.4$ in \cite{Crouzeix1} or Section $6$ in \cite{CGL18}), this implies that $ D_{R_\varepsilon}(c)$ is a 
two-spectral set for $A$; that is, for all polynomials $p$,
\[\|p(A)\| \le 2 \sup \{|p(z)|:z\in D_{R_\varepsilon}(c)\}.\]
Since this holds for all polynomials, it immediately extends to all functions in the disk algebra $\mathcal{A}(\mathbb{D})$ and in particular, it holds for $B$.  
Since  $\overline{D_{R_\varepsilon}(c)}$ is strictly contained in $ D_{R}(c)$, it also follows that 
 \[  \sup \{|B(z)|: z \in D_{R_\varepsilon}(c)\} < \tfrac{k}{2}\]
 which, by assumption, gives 
\[k = \|B(A)\| \le 2 \sup \{|B(z)|: z \in D_{R_\varepsilon}(c)\} < 2 \cdot \tfrac{k}{2} = k.\]
This yields the contradiction and establishes the result. \end{proof}

As corollaries, we immediately get the following results for unicritical Blaschke products and  automorphisms  applied to compressions of shifts:

\begin{corollary} \label{cor:uniB}  Let $\Theta$ and $B$ be finite Blaschke products with $\deg B <\deg \Theta$ and  $B$ unicritical with zero $z_0$. Then \[ \max \{|B(z)|: z \in W(S_\Theta) \} \ge \tfrac{1}{2}.\] \end{corollary}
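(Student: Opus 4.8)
The plan is to derive Corollary \ref{cor:uniB} directly from Theorem \ref{thm:basic} applied to the matrix $A = S_\Theta$. The key structural fact I need is that $\| B(S_\Theta)\| = 1$ whenever $B$ is a finite Blaschke product with $\deg B < \deg \Theta$; this is exactly the Sarason/Garcia--Ross result quoted in the introduction (Corollary 4 in \cite{GarciaRoss}). So with $k = \| B(S_\Theta)\| = 1 < 2$, Theorem \ref{thm:basic} immediately yields $\sup\{ |B(z)| : z \in W(S_\Theta) \cap \text{Domain}(B)\} \ge \tfrac{1}{2}$.

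The remaining points are minor bookkeeping. First, I must check the hypothesis $\tfrac{1}{\overline{z_0}} \notin \sigma(S_\Theta)$: the spectrum of $S_\Theta$ consists of the zeros of $\Theta$ (together with the boundary singularities, but for a finite Blaschke product $\Theta$ there are none), all of which lie in $\mathbb{D}$, whereas $\tfrac{1}{\overline{z_0}}$ lies in $\mathbb{C}\setminus\overline{\mathbb{D}}$ since $z_0 \in \mathbb{D}$. Hence the hypothesis holds automatically. Second, I should upgrade the $\sup$ over $W(S_\Theta) \cap \text{Domain}(B)$ to a $\max$ over $W(S_\Theta)$: since $W(S_\Theta)$ is a compact subset of $\overline{\mathbb{D}}$ and the only point of $\mathbb{C}$ where $B$ is undefined is $\tfrac{1}{\overline{z_0}} \notin \overline{\mathbb{D}}$, the function $|B|$ is continuous on all of $W(S_\Theta)$, so the supremum is attained and equals a maximum over $W(S_\Theta)$.

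There is essentially no serious obstacle here, since all the heavy lifting is done in Theorem \ref{thm:basic} (whose proof uses the two-spectral-set property of Euclidean disks). The only thing to be careful about is making sure the quoted norm identity $\| B(S_\Theta)\| = 1$ is invoked with the correct degree hypothesis $\deg B < \deg \Theta$ — which is precisely what is assumed — and that one records why $\tfrac{1}{\overline{z_0}}$ causes no trouble. I would write the proof in three or four sentences: invoke the norm identity to get $k=1$, note the spectral hypothesis is satisfied, apply Theorem \ref{thm:basic}, and observe the sup is a max. Below is the proof I would record.

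\begin{proof} Write $B(z) = \lambda\bigl(\tfrac{z-z_0}{1-\overline{z_0}z}\bigr)^m$ with $m = \deg B < \deg \Theta$. Since $z_0 \in \mathbb{D}$, the point $\tfrac{1}{\overline{z_0}}$ lies outside $\overline{\mathbb{D}}$, while $\sigma(S_\Theta) \subseteq \mathbb{D}$; hence $\tfrac{1}{\overline{z_0}} \notin \sigma(S_\Theta)$. Moreover, by the result of Sarason (see Corollary $4$ in \cite[p.~512]{GarciaRoss}), $\deg B < \deg \Theta$ forces $\|B(S_\Theta)\| = 1 < 2$. Applying Theorem \ref{thm:basic} with $A = S_\Theta$ and $k = 1$ gives
\[ \sup\{ |B(z)| : z \in W(S_\Theta) \cap \text{Domain}(B)\} \ge \tfrac{1}{2}. \]
Finally, $W(S_\Theta)$ is a compact subset of $\overline{\mathbb{D}}$ and $B$ is defined and continuous at every point of $\overline{\mathbb{D}}$, so the supremum above is attained and equals $\max\{ |B(z)| : z \in W(S_\Theta)\}$, which completes the proof. \end{proof}
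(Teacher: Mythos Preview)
Your proof is correct and follows essentially the same approach as the paper: invoke the Sarason/Garcia--Ross norm identity $\|B(S_\Theta)\|=1$, verify that the pole $\tfrac{1}{\overline{z_0}}$ causes no trouble, and apply Theorem~\ref{thm:basic} with $k=1$. You are slightly more explicit than the paper in justifying why the supremum is attained as a maximum, which is a welcome clarification.
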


\begin{proof}  As $W(S_\Theta) \subseteq \mathbb{D}$, we know $\tfrac{1}{\overline{z_0}} \not \in W(S_\Theta).$ As discussed earlier, Corollary $4$ in \cite[p. 512]{GarciaRoss} implies $\| B(S_\Theta)\| =1$.  By Theorem~\ref{thm:basic} with $k = 1$, we find that \[  \max \{|B(z)|: z \in W(S_\Theta) \}  \ge \tfrac{1}{2},\]
which is what we needed to show.
 \end{proof}

 \begin{corollary} Let $\Theta$ be a finite Blaschke product with $\deg \Theta=n >1$ and let 
 $\varphi$ be an automorphism of the unit disk. Then 
\[\max\{\left|\varphi(z)\right|: z \in W(S_\Theta)\} \ge \left(\tfrac{1}{2}\right)^{1/(n-1)}.\] \end{corollary}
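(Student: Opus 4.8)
The plan is to reduce directly to Corollary \ref{cor:uniB} by passing to an appropriate power of $\varphi$. Since $\varphi$ is an automorphism of $\mathbb{D}$, write $\varphi(z) = \lambda \tfrac{z-a}{1-\bar a z}$ with $\lambda \in \mathbb{T}$ and $a \in \mathbb{D}$, and set $B := \varphi^{\,n-1}$. Then $B$ is a unicritical finite Blaschke product of degree $n-1$ with repeated zero at $a \in \mathbb{D}$, and because $n > 1$ we have $\deg B = n-1 < n = \deg \Theta$, so the hypotheses of Corollary \ref{cor:uniB} are satisfied.

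First I would apply Corollary \ref{cor:uniB} to the pair $(B,\Theta)$ to conclude that $\max\{|B(z)| : z \in W(S_\Theta)\} \ge \tfrac{1}{2}$. Next, since $W(S_\Theta) \subseteq \mathbb{D}$ (as already used in the proof of Corollary \ref{cor:uniB}), $\varphi$ is defined on all of $W(S_\Theta)$ and $|B(z)| = |\varphi(z)|^{\,n-1}$ there; because $t \mapsto t^{\,n-1}$ is increasing on $[0,\infty)$ and $W(S_\Theta)$ is compact, the maximum of $|B|$ over $W(S_\Theta)$ equals $\bigl(\max\{|\varphi(z)| : z \in W(S_\Theta)\}\bigr)^{n-1}$. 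Combining these two facts gives $\bigl(\max\{|\varphi(z)| : z \in W(S_\Theta)\}\bigr)^{n-1} \ge \tfrac{1}{2}$, and taking $(n-1)$-th roots yields the claimed inequality $\max\{|\varphi(z)| : z \in W(S_\Theta)\} \ge \bigl(\tfrac{1}{2}\bigr)^{1/(n-1)}$.

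There is essentially no obstacle here: the only points to verify are that an automorphism raised to the power $n-1$ is a genuine unicritical finite Blaschke product of degree $n-1$ with zero in $\mathbb{D}$ — immediate from the explicit form of $\varphi$ — and that $W(S_\Theta)$ lies in the open disk so that $\varphi$ is defined on it, which is standard for compressions of the shift with finite Blaschke product symbol and is invoked in the proof of Corollary \ref{cor:uniB} itself.
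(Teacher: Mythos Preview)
Your proof is correct and follows essentially the same approach as the paper: define the unicritical Blaschke product $B=\varphi^{\,n-1}$ (the paper drops the unimodular factor, which is immaterial), apply Corollary~\ref{cor:uniB} to get $\max_{W(S_\Theta)}|B|\ge \tfrac12$, and then take $(n-1)$-th roots using $|B|=|\varphi|^{\,n-1}$.
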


\begin{proof} By definition, we can write $\varphi(z) :=\lambda \frac{z-z_0}{1-\overline{z}_0 z}$ for some $z_0 \in \mathbb{D}$ and $\lambda \in \mathbb{T}$. Set $B(z) =  (\frac{z-z_0}{1-\overline{z}_0z})^{n-1}$. Then $B$ is unicritical with $\deg B < \deg \Theta$, so by Corollary \ref{cor:uniB}, 
\[  \max \{|B(z)|: z \in W(S_\Theta) \}  \ge \tfrac{1}{2}.\] Therefore, there exists $a \in W(S_\Theta)$ such that $\left|\varphi(a)\right| \ge \left(\tfrac{1}{2}\right)^{1/(n-1)}.$
\end{proof} 

\section{The Case of a Unicritical $\Theta$} \label{sec:uniTheta}
In this section, we consider the LSC inequality \eqref{eqn:LSC} when $\Theta$ is unicritical, i.e.
 \[\Theta(z) =  \lambda \left(\frac{z-z_0}{1-\overline{z_0}z}\right)^{n},\] 
 for some $n\ge 1$, $z_0 \in \mathbb{D}$, and $\lambda \in \mathbb{T}.$ As $\lambda$ does not affect the operator $S_\Theta$, we will typically assume $\lambda=1$.  We will often use the notation $\Theta_{z_0}$ or $\Theta_{z_0}^n$ when we need to keep track of the zero $z_0$ or power $n$. For $\Theta$, establishing \eqref{eqn:LSC} is really a question about the numerical range $W(S_\Theta)$ and thus our initial discussion here focusses on its structure.

A lot is known about the numerical ranges $W(S_{\Theta})$ associated to unicritical $\Theta$. For example in \cite{Gaaya12}, Gaaya characterized their numerical radii and established a number of intermediate results, including the following useful equality in his Proposition 2.6:
\[ W(S_{\Theta_{z_0}})= e^{i \text{arg }( z_0)} W(S_{\Theta_{|z_0|}}).\]
Thus to study $W(S_{\Theta_{z_0}})$, we can generally assume that $z_0 = t\in [0,1)$. 
For $t\in(-1,1)$, let $M_t$ be the matrix representation of $S_{\Theta_t}$ with respect to the 
Takenaka-Malmquist-Walsh basis of $K_{\Theta_t}$ (see pages $114-117$ for a discussion of both this basis and $M_t$ \cite{DGSV}). Then $M_t$
is an upper triangular matrix given by $M_t = t I + (1-t^2) A_t$, where  $I$ is the $n\times n$ identity matrix and $A_t$ is the upper triangular nilpotent matrix
\begin{equation} \label{eqn:At} A_t = \begin{bmatrix} 0 &1 & -t & \dots & (-t)^{n-2} \\
  &  0 & 1 & \ddots & \vdots \\
 & &   & \ddots& -t \\
 &  & &   & 1\\ 
0 & &  &   & 0
\end{bmatrix}. \end{equation}
The matrix $A_t$ is sometimes called a KMS matrix and the numerical ranges of these matrices have been studied by Gau and Wu in \cite{GauWu13, GauWu14}.
In the $3 \times 3$ case, Crouzeix's results from \cite{C13} can be applied to obtain the boundary of $W(A_t)$ and hence, of $W(M_t)$. In particular, following  \cite[p.39]{C13} set
\[ m_t(s) = -\tfrac{2}{\sqrt{3}} \sin \left(\frac{\pi + \text{arcsin}\left(3\sqrt{3} \tfrac{-2t}{2(2+t^2)^{3/2}} \cos(s)\right)}{3}\right),\]
for $t \in (-1,0]$ and $s \in [0, 2\pi]$. Then the formula for the boundary representation from \cite[p.31]{C13}  implies that
the boundary of $W(M_t)$ is parameterized by $(\hat{x}_t(s), \hat{y}_t(s))$, where
\[ \begin{aligned}
\hat{x}_t(s) &=t +\tfrac{(1-t^2)}{2} \sqrt{2+t^2} \left( -\cos(s) m_t(s) + \sin(s)m_t'(s) \right) \\
\hat{y}_t(s) &=\tfrac{(1-t^2)}{2} \sqrt{2+t^2} \left( -\sin(s) m_t(s) - \cos(s)m_t'(s) \right),
\end{aligned}
\]
and a similar formula holds if $t \in (0,1)$. This boundary formula illustrates the fact that even though $A_t$ and $M_t$ appear simple, their numerical ranges are quite complicated. Indeed, it is quite difficult to use this boundary formula to deduce quantitative results about $W(S_{\Theta_t})$. Instead, in the following subsection, we find a useful curve $\mathcal{C}_t$ inside $W(A_t)$.

\subsection{A curve in $W(A_t)$} 

The key result in this section is the following:

\begin{proposition} \label{prop:Ct} Fix $t\in(-1,1)$ and let $A_t$ be the $n\times n$ matrix from \eqref{eqn:At}. Then $W(A_t)$ contains the curve $C_t:=C^n_t$ parameterized by 
\begin{equation} \label{eqn:Ct1} \sum_{k=1}^{n-1} a_{n,k} (-t)^{k-1}  e^{isk}, \qquad  s \in [0, 2\pi),\end{equation}
where $a_{n,1} =  \cos\left( \frac{\pi}{n+1} \right)$ and more generally,
\begin{equation} \label{eqn:ank} a_{n,k} = \frac{1}{(n+1)\sin\left(\frac{\pi}{n+1}\right)}\left( (n-k) \cos\left( \tfrac{k \pi}{n+1}\right)\sin\left(\tfrac{\pi}{n+1}\right) +  \sin\left( \tfrac{\pi(n-k)}{n+1}\right)\right).\end{equation}
%
%\cos\left( \frac{\pi}{n+1} \right) e^{is} +\frac{1}{n+1} \sum_{k=2}^{n-1} (-t)^{k-1} 
%\left( (n-k) \cos\left( \frac{k \pi}{n+1}\right) + \frac{ \sin\left( \frac{\pi(n-k)}{n+1}\right)}{\sin\left(\frac{\pi}{n+1}\right)}\right)
% e^{i(ks)},\end{equation}
%x_t(s) &= \cos\left( \frac{\pi}{n+1} \right)  \cos(s) -\frac{2t}{n+1} \sum_{k=2}^{n-1} (-t)^{k-2} \sum_{\ell=1}^{n-k} \sin\left(\frac{\ell \pi}{n+1}\right) \sin\left( \frac{(k+\ell)\pi}{n+1}\right) \cos(ks) \\
%y_t(s) &= \cos\left( \frac{\pi}{n+1} \right)  \sin(s) -\frac{2t}{n+1} \sum_{k=2}^{n-1} (-t)^{k-2} \sum_{\ell=1}^{n-k} \sin\left(\frac{\ell \pi}{n+1}\right) \sin\left( \frac{(k+\ell)\pi}{n+1}\right) \sin(ks),
\end{proposition}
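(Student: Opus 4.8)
The plan is to realize every point of the curve $C_t$ as a value $\langle A_t x, x\rangle$ for an explicit unit vector $x = x(s)\in\mathbb{C}^n$; since $W(A_t)$ contains all such values by definition, this gives $C_t\subseteq W(A_t)$ at once. The guiding idea is the base case $t=0$, where $A_0$ is the nilpotent Jordan block, $W(A_0)$ is the closed disk of radius $\cos\left(\tfrac{\pi}{n+1}\right)$, and its boundary circle is swept out by rotated copies of the classical norming vector for $A_0$. Accordingly, for $s\in[0,2\pi)$ I would take $x(s)$ to have $j$-th coordinate
\[ x(s)_j = \sqrt{\tfrac{2}{n+1}}\,\sin\left(\tfrac{j\pi}{n+1}\right) e^{ijs}, \qquad j=1,\dots,n. \]
The first step is to check $\|x(s)\| = 1$, which reduces to the identity $\sum_{j=1}^n \sin^2\left(\tfrac{j\pi}{n+1}\right) = \tfrac{n+1}{2}$; this follows from the half-angle formula together with $\sum_{j=1}^n \cos\left(\tfrac{2j\pi}{n+1}\right) = -1$ (the nontrivial $(n+1)$-st roots of unity sum to $-1$).

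The second step is to compute the quadratic form. Using that the $(i,j)$ entry of $A_t$ is $(-t)^{j-i-1}$ for $i<j$ and $0$ otherwise, I would expand $\langle A_t x(s), x(s)\rangle = \sum_{1\le i<j\le n}(-t)^{j-i-1}\,x(s)_j\,\overline{x(s)_i}$ and regroup according to $k = j-i$ (so that for fixed $k\in\{1,\dots,n-1\}$ the index $i$ runs from $1$ to $n-k$), obtaining
\[ \langle A_t x(s), x(s)\rangle = \frac{2}{n+1}\sum_{k=1}^{n-1}(-t)^{k-1}e^{isk}\sum_{i=1}^{n-k}\sin\left(\tfrac{i\pi}{n+1}\right)\sin\left(\tfrac{(i+k)\pi}{n+1}\right). \]
Comparing with \eqref{eqn:Ct1}, it then suffices to show the inner sum equals $\tfrac{n+1}{2}\,a_{n,k}$. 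For this I would apply the product-to-sum identity, writing the inner sum as $\tfrac12(n-k)\cos\left(\tfrac{k\pi}{n+1}\right) - \tfrac12\sum_{i=1}^{n-k}\cos\left(\tfrac{(2i+k)\pi}{n+1}\right)$, where the second term is the real part of a geometric series in the $(n+1)$-st roots of unity that collapses to $-\tfrac12\,\frac{\sin\left(\tfrac{(n-k)\pi}{n+1}\right)}{\sin\left(\tfrac{\pi}{n+1}\right)}$ after using $\sin\left(\tfrac{(k+1)\pi}{n+1}\right) = \sin\left(\tfrac{(n-k)\pi}{n+1}\right)$. Collecting the two pieces reproduces exactly formula \eqref{eqn:ank}, and for $k=1$ this simplifies, via $\sin\left(\tfrac{(n-1)\pi}{n+1}\right) = 2\sin\left(\tfrac{\pi}{n+1}\right)\cos\left(\tfrac{\pi}{n+1}\right)$, to $a_{n,1} = \cos\left(\tfrac{\pi}{n+1}\right)$, as required.

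The conceptual content is the choice of the norming-vector family $x(s)$, which is short once one recognizes the $t=0$ model. The main effort — and the step most prone to sign errors — is the clean evaluation of the geometric sum $\sum_{i=1}^{n-k}\omega^i$ with $\omega = e^{2\pi i/(n+1)}$ (where the relation $\omega^{n+1}=1$ must be used to wrap the exponents around), together with the bookkeeping of the auxiliary half-integer phase $e^{ik\pi/(n+1)}$, so that the real part emerges as a single sine ratio with the correct sign and denominator. Once that computation is pinned down, matching the outcome to \eqref{eqn:ank} and \eqref{eqn:Ct1} is immediate and the proof is complete.
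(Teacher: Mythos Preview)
Your proposal is correct and follows essentially the same approach as the paper: both choose the unit vectors $x(s)_j = \sqrt{\tfrac{2}{n+1}}\sin\bigl(\tfrac{j\pi}{n+1}\bigr)e^{ijs}$ (up to an irrelevant global phase), expand $\langle A_t x(s), x(s)\rangle$ by grouping according to $k=j-i$, and evaluate the resulting cosine sum over an arithmetic progression. The only cosmetic differences are that the paper cites \cite{HaagerupdelaHarpe} for the norm and the $k=1$ case and \cite{knapp} for the arithmetic-progression identity, whereas you compute both directly.
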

\begin{proof} First note that for all $\vec{x} \in \mathbb{C}^n$, a straightforward computation gives
\begin{equation}\label{eqn:At1} \langle A_t \vec{x}, \vec{x} \rangle = \sum_{k=1}^{n-1} (-t)^{k-1}\sum_{\ell=1}^{n-k} \bar{x}_{\ell} x_{\ell +k}.\end{equation}
Fix $s \in [0, 2\pi)$ and define $\vec{x} \in \mathbb{C}^n$ by 
\[ x_\ell = \sqrt{\tfrac{2}{n+1}} \sin\left( \tfrac{ \ell \pi}{n+1} \right) e^{i(\ell-1)s}, \quad \text{ for } 1 \le \ell \le n.\] 
Then as observed in \cite[Proposition 1]{HaagerupdelaHarpe}, $\| \vec{x}\| =1$ and 
\[ \sum_{\ell=1}^{n-1} \bar{x}_{\ell} x_{\ell +1} = \cos\left( \tfrac{\pi}{n+1} \right) e^{is}.\] 
Substituting that into \eqref{eqn:At1} and factoring out a common $\frac{2}{n+1}$ yields the following point corresponding to $s$:
\begin{equation} \label{eqn:Ct} \cos\left( \tfrac{\pi}{n+1} \right) e^{is} +\tfrac{2}{n+1} \sum_{k=2}^{n-1} (-t)^{k-1} \sum_{\ell=1}^{n-k} \sin\left(\tfrac{\ell \pi}{n+1}\right) \sin\left( \tfrac{(k+\ell)\pi}{n+1}\right) e^{i(ks)}.\end{equation}
Fix $k \ge 2$. Then standard trigonometric identities imply that 
\begin{equation} \label{eqn:Ct2} \sum_{\ell=1}^{n-k} \sin\left(\tfrac{\ell \pi}{n+1}\right) \sin\left( \tfrac{(k+\ell)\pi}{n+1}\right)  = \tfrac{1}{2} (n-k) \cos\left( \tfrac{k\pi}{n+1}\right) -  \tfrac{1}{2} \sum_{\ell=1}^{n-k}  \cos\left(\tfrac{(k+2\ell)\pi}{n+1} \right).\end{equation}
Observe that we can write the second term on the right-hand-side of that equation as 
\[  -\tfrac{1}{2} \sum_{\ell=1}^{n-k}  \cos\left(\tfrac{(k+2\ell)\pi}{n+1} \right)  =  -\tfrac{1}{2} \sum_{\ell=0}^{N-1} \cos \left( \alpha + \ell \beta\right),\]
where
\[ \alpha = \tfrac{(k+2) \pi}{n+1}, \ \ \ \beta = \tfrac{2 \pi}{n+1}, \ \ \ \text{ and }  \ \ N = n-k.\]
Then well-known identities for arithmetic progressions of angles in trigonometric functions, see \cite[p. 371]{knapp}, imply
\[  \sum_{\ell=1}^{n-k}  \cos\left(\tfrac{(k+2\ell)\pi}{n+1} \right) =  \frac{ \sin\left (\tfrac{N\beta}{2}\right)}{\sin\left(\tfrac{\beta}{2}\right)} \cos\left( \alpha + \tfrac{(N-1)\beta}{2} \right) =-\frac{ \sin\left( \tfrac{(n-k)\pi}{n+1}\right)}{\sin\left(\tfrac{\pi}{n+1}\right)}.\]
Substituting that into \eqref{eqn:Ct2} and then \eqref{eqn:Ct} yields the point from \eqref{eqn:Ct1}.
Since each such point is in $W(A_t)$, the curve $C_t$ is also in $W(A_t)$. 
\end{proof}

\begin{remark} \label{rem:ct} While the formula for $C_t$ in Proposition \ref{prop:Ct} appears complicated, it simplifies quite dramatically for  small values of $n$. For example, 
\begin{itemize}
%\item If $n=2$, $C_t$ is given by $\tfrac{1}{2} e^{is}$.
\item If $n=3$, $C_t$ is given by $\tfrac{1}{\sqrt{2}} e^{is} - \tfrac{t}{4} e^{i(2s)}.$
\item If $n=4$, $C_t$ is given by $\tfrac{1}{4}(1 + \sqrt5) e^{is} -\tfrac{t}{\sqrt{5}} e^{i(2s)} + \tfrac{t^2}{4}\left(1-\tfrac{1}{\sqrt{5}}\right) e^{i(3s)}$. 
\item If $n=5$, $C_t$ is given by $\tfrac{\sqrt{3}}{2} e^{is} -\tfrac{7t}{12} e^{i(2s)} +\tfrac{t^2\sqrt{3}}{6} e^{i(3s)} -\tfrac{t^3}{12} e^{i(4s)}.$
\end{itemize}
Figure \ref{fig:Ct} illustrates these curves $C_t \subseteq W(A_t)$  and $t +(1-t^2)C_t \subseteq W(M_t)$ for $n=3, \dots, 9$ and $t=0.8.$ These curves do not (in general) appear to be convex, but 
they do appear to grow as $n$ increases.
\begin{figure}[h!]
    \subfigure[The curves $C_t$ for $t=0.8$ and $n=3, \dots, 9$.]
      {\includegraphics[width=0.3 \textwidth]{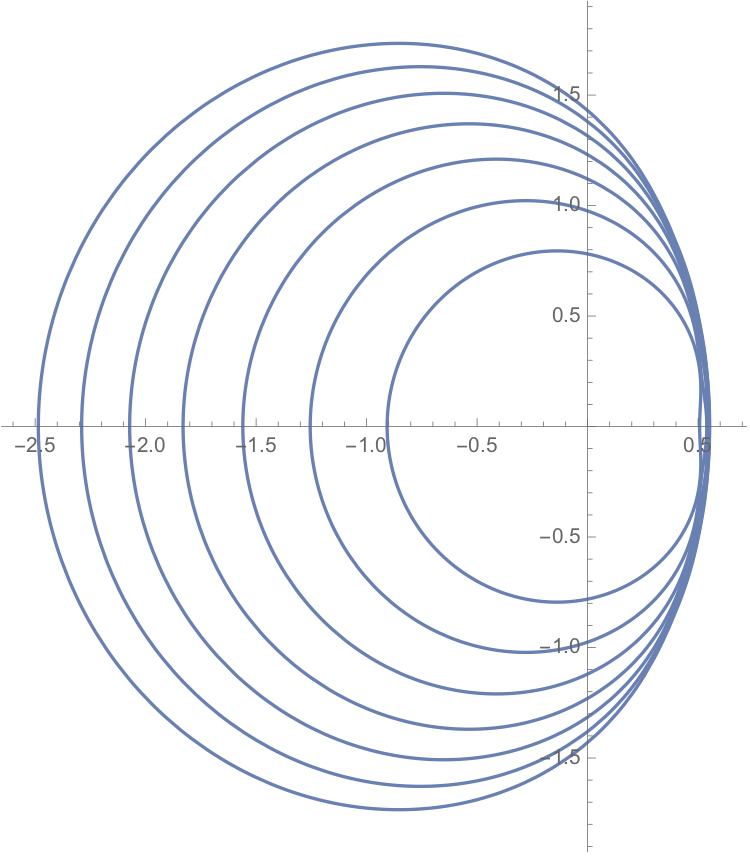}}
    \quad \quad
    \subfigure[The curves $t +(1-t^2)C_t$ for $t=0.8$ and $n=3, \dots, 9$ inside $\mathbb{D}$.]
      {\includegraphics[width=0.35 \textwidth]{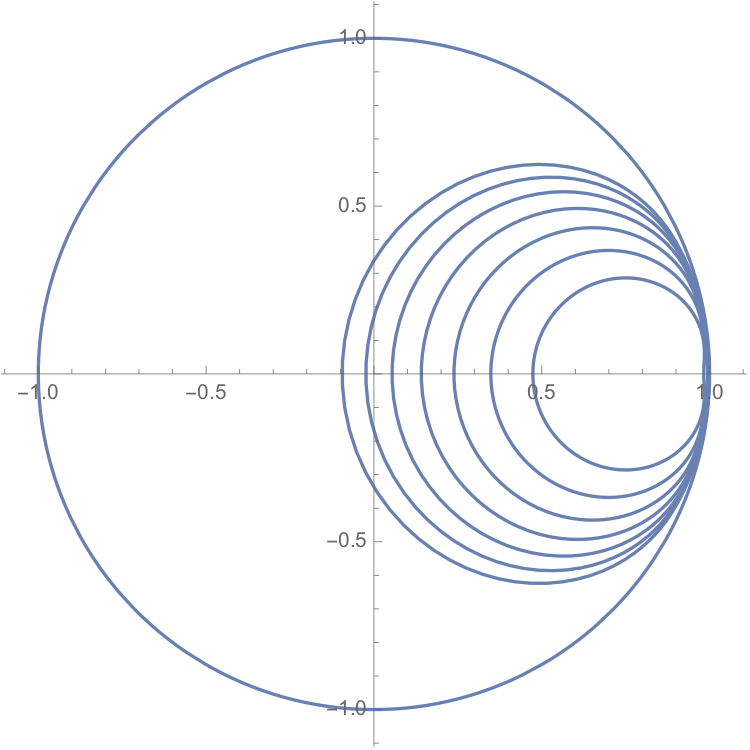}}
  \caption{\textsl{A selection of the curves $C_t$ in $W(A_t)$ and $t +(1-t^2)C_t$ in $W(S_{\Theta_t})$.}}
  \label{fig:Ct}
\end{figure}
The formula also implies that $C_t$ is a closed curve, symmetric across the $x$-axis. Setting $s=0, \pi$ gives two points in $W(A_t)$ and taking their average gives the point
 \begin{equation} \label{eqn:center} \hat{c}_t:= -\sum_{\substack{2 \le k \le n-1 \\ k \text{ even } } } t^{k-1} 
\Bigg( (n-k) \cos\left( \tfrac{k \pi}{n+1}\right) + \frac{ \sin\left( \frac{\pi(n-k)}{n+1}\right)}{\sin\left(\frac{\pi}{n+1}\right)}\Bigg),\end{equation}
which must be in $W(A_t)$ by convexity.

As mentioned earlier, when $n=3$, Crouzeix's work in \cite{C13} provides the exact boundary of $W(A_t)$.  In this $3\times 3$ case,  $C_t$ appears to closely approximate $\partial W(A_t)$, especially for small values of $t$. This phenomenon is illustrated in Figure \ref{fig:3x3}.
\begin{figure}[h!]
    \subfigure[$t =0.55.$]
      {\includegraphics[width=0.3 \textwidth]{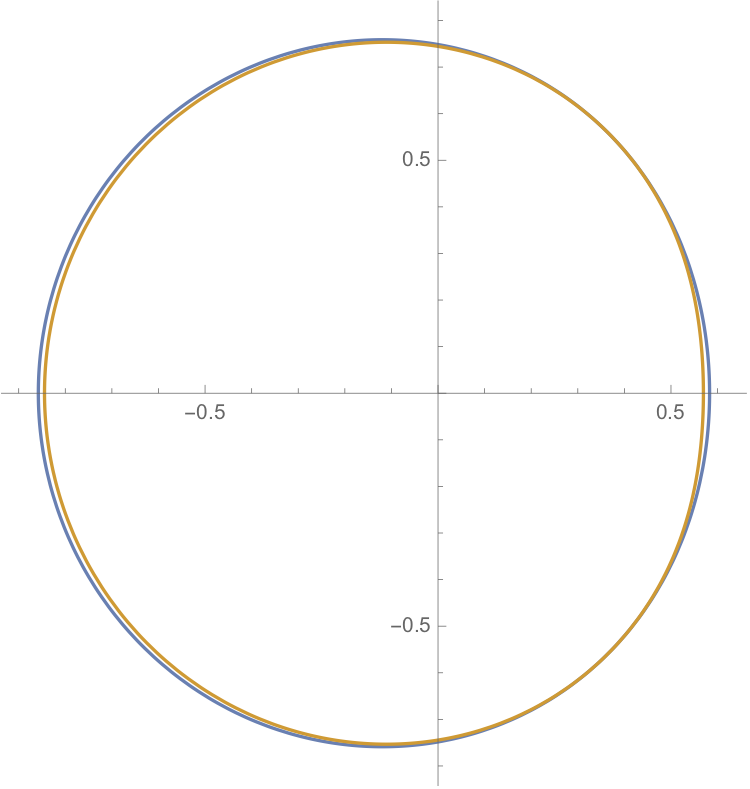}}
    \quad 
    \subfigure[$t =0.75$.]
      {\includegraphics[width=0.3 \textwidth]{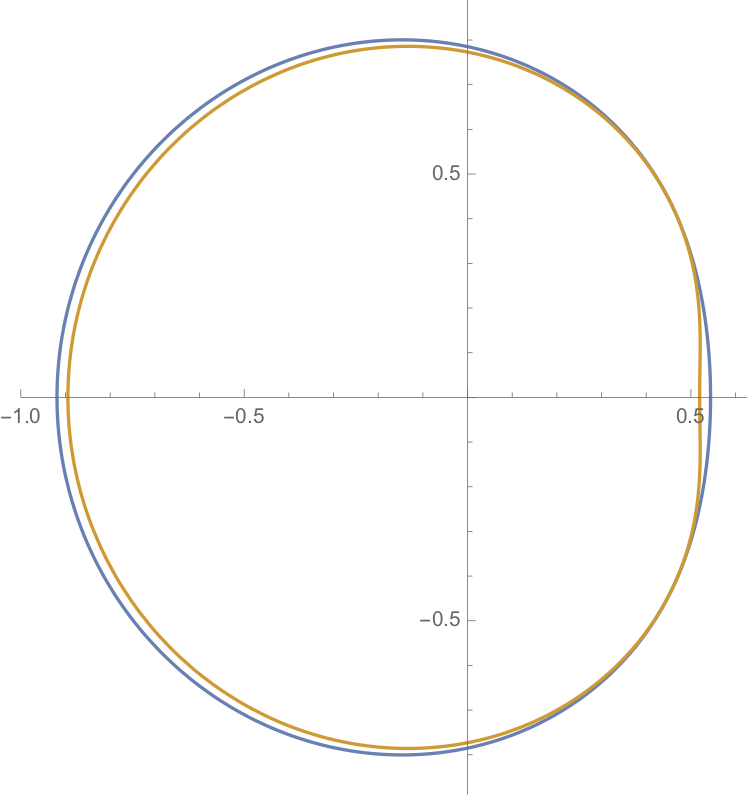}}
      \quad
          \subfigure[$t =0.95$.]
      {\includegraphics[width=0.28 \textwidth]{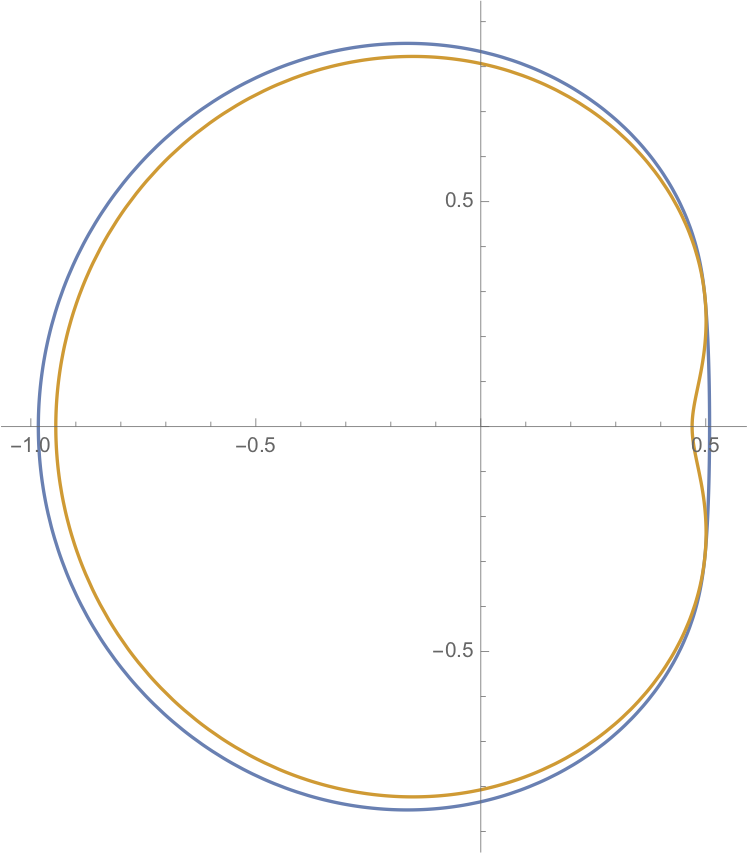}}
  \caption{\textsl{For $n=3$, a selection of the curves $C_t$ and boundaries $\partial W(A_t)$.}}
  \label{fig:3x3}
\end{figure}

\end{remark}

\subsection{Applications of $C_t$.} We now use these curves to study $W(S_{\Theta})$, for $\Theta$ unicritical. First, we can use them to identify large circles in $W(S_{\Theta})$ for small values of $n$.

\begin{theorem}\label{prop: phd} Let $\Theta$ be unicritical with  $\deg \Theta =n$. Then:
\begin{itemize}
\item[i.] If $n \ge 3$, $W(S_{\Theta})$ always contains a pseudohyperbolic disk of radius $\cos(\tfrac{\pi}{4}) =\tfrac{1}{\sqrt{2}}$.
\item[ii.]  If $n \ge 4$, $W(S_{\Theta})$ always contains a pseudohyperbolic disk of radius  $\cos(\tfrac{\pi}{5})=\frac{1}{4} (1 + \sqrt{5})$.  
\end{itemize}
\end{theorem}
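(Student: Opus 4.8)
The plan is to make three reductions and then carry out an explicit planar estimate. First, by the identity $W(S_{\Theta_{z_0}}) = e^{i\arg z_0}\,W(S_{\Theta_{|z_0|}})$ recorded above, together with the fact that multiplication by a unimodular constant is a disk automorphism and hence carries pseudohyperbolic disks to pseudohyperbolic disks of the same radius, we may assume $\Theta = \Theta_t$ with $t \in [0,1)$. Second, for $m \le n$ the Blaschke product $\Theta_t^m = \big(\tfrac{z-t}{1-tz}\big)^m$ divides $\Theta_t^n$, so $K_{\Theta_t^m} \subseteq K_{\Theta_t^n}$ and one checks directly that $S_{\Theta_t^m}$ is the compression of $S_{\Theta_t^n}$ to $K_{\Theta_t^m}$; hence $W(S_{\Theta_t^m}) \subseteq W(S_{\Theta_t^n})$. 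Thus for part (i) it suffices to handle $n = 3$ and for part (ii) it suffices to handle $n = 4$. Third, since $W(S_{\Theta_t}) = W(M_t)$ with $M_t = tI + (1-t^2)A_t$, Proposition \ref{prop:Ct} gives that $W(S_{\Theta_t})$ contains the convex set $K_t := \operatorname{conv}\big(t + (1-t^2)C^n_t\big)$, where for $n = 3,4$ the curve $C^n_t$ has the fully explicit form in Remark \ref{rem:ct}. So everything reduces to: for each $t \in [0,1)$ exhibit a pseudohyperbolic disk of radius $\cos\big(\tfrac{\pi}{n+1}\big)$ inside $K_t$.

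Since $K_t$ is symmetric about the real axis, I would look for an inscribed Euclidean disk $D_{R(t)}(c(t))$ with real center and then convert it to pseudohyperbolic form. The containment $D_R(c) \subseteq K_t$ is exactly the support-function inequality
\[ c\cos\phi + R \ \le\ \max_{s \in [0,2\pi)} \operatorname{Re}\!\left(e^{-i\phi}\big(t + (1-t^2)C^n_t(s)\big)\right), \qquad \phi \in [0,\pi], \]
and by \eqref{eqn:z0}--\eqref{eqn:r} the pseudohyperbolic radius $r$ of $D_R(c)$ satisfies $r + \tfrac1r = \tfrac{R^2 - c^2 + 1}{R}$, so (as $x \mapsto x + \tfrac1x$ is decreasing on $(0,1)$) the requirement $r \ge \cos\big(\tfrac{\pi}{n+1}\big)$ becomes
\[ \frac{R(t)^2 - c(t)^2 + 1}{R(t)} \ \le\ \cos\!\big(\tfrac{\pi}{n+1}\big) + \sec\!\big(\tfrac{\pi}{n+1}\big). \]
At $t = 0$ one has $C^n_0(s) = \cos\big(\tfrac{\pi}{n+1}\big)e^{is}$, so $K_0 = \overline{D\big(0,\cos(\tfrac{\pi}{n+1})\big)}$, the choice $(c,R) = \big(0,\cos(\tfrac{\pi}{n+1})\big)$ is optimal, and both displayed inequalities hold with equality; the whole point of the theorem is that the slack stays nonnegative for $t \in (0,1)$. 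A cautionary observation that guides the choice of $c(t)$: the naive candidate $D_\rho(t,\cos(\tfrac{\pi}{n+1}))$ already fails for $n=3$ (its Euclidean realization protrudes from $K_t$ on the left), so $c(t)$ must be taken strictly to the left of the base point $t$ --- roughly at the horizontal midsection of $K_t$, which is close to the curve's natural center $\hat c_t$ from \eqref{eqn:center}.

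For $n = 3$, with $C^3_t(s) = \tfrac{1}{\sqrt2}e^{is} - \tfrac{t}{4}e^{2is}$, the stationarity condition $\tfrac{d}{ds}\operatorname{Re}(e^{-i\phi}C^3_t(s)) = 0$ is, after setting $w = e^{is}$, only a quadratic in $w$; so one can solve for the maximizing branch in closed form, compute $\max_s \operatorname{Re}(e^{-i\phi}C^3_t(s))$ explicitly, pick a convenient $(c(t),R(t))$ matching the horizontal and vertical extremes of $K_t$, and reduce to a single inequality in $t \in [0,1)$ --- a careful but routine calculus check. The real work is the case $n = 4$, where $C^4_t$ carries an extra $e^{3is}$ term: the stationarity condition is now a cubic-type trigonometric equation and the support function has no clean closed form. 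I would handle this by bounding $\max_s \operatorname{Re}(e^{-i\phi}C^4_t(s))$ below by a tractable trigonometric polynomial (or by restricting to the finitely many directions $\phi$ that actually bind against the incircle), again reducing to one inequality in $t$, and verifying it --- by endpoint/monotonicity analysis or, if needed, with computer algebra --- with special care at $t \to 1^-$, where $K_t$ and the candidate disk both collapse to the point $1$ while their pseudohyperbolic radii must stay $\ge \cos\big(\tfrac{\pi}{n+1}\big)$. The main obstacle is precisely this $n=4$ support-function estimate and the uniform-in-$t$ verification of the resulting inequality; the reductions and the conversion machinery are structural.
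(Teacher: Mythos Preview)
Your reductions (rotate to $t\in[0,1)$; nest down to $n=3,4$ via $K_{\Theta_t^m}\subseteq K_{\Theta_t^n}$; work inside the convex hull of $t+(1-t^2)C^n_t$) and your endgame (compute the pseudohyperbolic radius of the exhibited Euclidean disk via \eqref{eqn:r} and verify it stays $\ge\cos\tfrac{\pi}{n+1}$ on $[0,1)$) match the paper exactly. The divergence is in how you certify that a candidate disk sits inside $\operatorname{conv}(C^n_t)$, and the paper's choice here dissolves precisely the $n=4$ obstacle you flag.

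Rather than the support-function inequality --- which, as you say, requires for each direction $\phi$ a maximization of $\operatorname{Re}\big(e^{-i\phi}C^n_t(s)\big)$ over $s$, with no clean closed form when $n=4$ --- the paper fixes the center at $\hat c_t$ from \eqref{eqn:center} (namely $-t/4$ for $n=3$ and $-t/\sqrt5$ for $n=4$) and bounds the single \emph{radial} quantity $|f(s)-\hat c_t|^2$ from below. For $n=3$ this is the one-line identity $|f(s)+t/4|^2=\tfrac12+\tfrac{t^2}{4}\sin^2 s\ge\tfrac12$; for $n=4$ the substitution $w=\cos s$ collapses $|f(s)+t/\sqrt5|^2$ to $\tfrac{3+\sqrt5}{8}+\tfrac{t^2}{2\sqrt5}+g(t,w)$, where $g\ge0$ on $[0,1]\times[-1,1]$ is a routine two-variable calculus check. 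The passage from ``$C_t$ lies outside $\overline{D_R(\hat c_t)}$'' to ``$D_R(\hat c_t)\subseteq\operatorname{conv}(C_t)$'' is then topological rather than analytic: $C_t$ is a closed curve avoiding the disk, symmetric across $\mathbb R$, with $f(0)$ and $f(\pi)$ real and on opposite sides of $\hat c_t$, hence of winding number $1$ about every point of the disk --- and any point about which a closed curve has nonzero winding number lies in the curve's convex hull. So the two-parameter support-function problem you set up becomes a one-parameter distance bound, and your cubic-type trigonometric obstruction never arises.
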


\begin{proof} Without loss of generality, we can assume the unicritical $\Theta$ has its zero $t\in [0,1)$ and denote the function by $\Theta_t$. To prove (i), by the nested property of these numerical ranges, we can assume $n=3$. Then, by Remark~\ref{rem:ct}, the points on  $C_t$ are given by $f(s):=\tfrac{1}{\sqrt{2}} e^{is} - \tfrac{t}{4} e^{i(2s)}$ for $s \in [0, 2\pi)$. A simple computation gives
\begin{equation} \label{eqn:Ct3} |f(s) + \tfrac{t}{4} |^2 = | \tfrac{1}{\sqrt{2}} - \tfrac{t}{4} e^{is} + \tfrac{t}{4} e^{-is}|^2 = \tfrac{1}{2} + \tfrac{t^2}{4}\sin^2(s) \ge \tfrac{1}{2}.\end{equation}
Thus, $C_t \subseteq \mathbb{C} \setminus D_{1/\sqrt{2}} (-\tfrac{t}{4})$ and looking at $s=0, \pi, 2\pi$, the curve $C_t$ begins at $\tfrac{1}{\sqrt{2}} - \tfrac{t}{4}$, goes through $-(\tfrac{1}{\sqrt{2}} + \tfrac{t}{4})$, and ends back at  $\tfrac{1}{\sqrt{2}} - \tfrac{t}{4}$. These facts combined with the $x$-axis symmetry of $C_t$ implies that the convex hull of $C_t$ (and hence $W(A_t)$) contains $D_{1/\sqrt{2}} (-\tfrac{t}{4}).$ Thus, 
\begin{equation} \label{eqn:psd1} D_{1/\sqrt{2}(1-t^2)} (t-(1-t^2)\tfrac{t}{4}) \subseteq W(S_{\Theta_t}).\end{equation}
 This disk is also a pseudohyperbolic disk. To determine its radius $r(t)$, one can solve \eqref{eqn:r} with $c = t-(1-t^2)\tfrac{t}{4}$ and $R = \tfrac{1}{\sqrt{2}}(1-t^2)$
to conclude that
\[ r(t) = \tfrac{1}{32} \left( 24\sqrt{2} -t^2 \sqrt{2} +t^4 \sqrt{2} -\sqrt{128 -96t^2+98t^4-4t^6 +2t^8}\right).\]
 Solving $r(t) = \tfrac{1}{\sqrt{2}}$ yields only $t=0,1$ on $[0,1]$. As $r(\tfrac{1}{2}) > \tfrac{1}{\sqrt{2}}$, continuity implies that $r(t) \ge \tfrac{1}{\sqrt{2}}$ on $[0,1)$, which shows that the disk in \eqref{eqn:psd1} has pseudohyperbolic radius at least $\tfrac{1}{\sqrt{2}}$ and completes the proof of (i).
 
To prove (ii), we can assume that $n=4$. Then since $\cos(\pi/5) =\frac{1}{4} (1 + \sqrt{5})$, the formula for $C_t$ in Remark~\ref{rem:ct}  shows that the points on  $C_t$ are given by $f(s):=\cos(\tfrac{\pi}{5}) e^{is} -\tfrac{t}{\sqrt{5}} e^{i(2s)} + \tfrac{t^2}{4}(1-\tfrac{1}{\sqrt{5}}) e^{i(3s)}$ for $s \in [0, 2\pi]$.  We will examine disks in $W(A_t)$ centered at $\tfrac{-t}{\sqrt{5}}$ and thus, must analyze the quantity
 \begin{equation} \label{eqn:Ct4}  |f(s) + \tfrac{t}{\sqrt{5}}|^2 = \left |\tfrac{1}{4} (1 + \sqrt{5})  +\tfrac{t^2}{4}(1-\tfrac{1}{\sqrt{5}}) \cos(2s) \right|^2 + \left| -\tfrac{2t}{\sqrt{5}} \sin (s) +\tfrac{t^2}{4}(1-\tfrac{1}{\sqrt{5}}) \sin(2s) 
 \right|^2. 
 \end{equation}
 Setting $w = \cos(s)$ and simplifying \eqref{eqn:Ct4}, we can conclude that the right-hand side of \eqref{eqn:Ct4} is equal to 
 \[
 \begin{aligned}  \tfrac{3+\sqrt{5}}{8} + \tfrac{t^2}{2 \sqrt{5}}  
 + \tfrac{3-\sqrt{5}}{40}t^4 + \left( \tfrac{4-\sqrt{5}}{5} t^2 + \tfrac{2-2\sqrt{5}}{5} t^3 \cos(s)\right) \sin(s)^2 \\
 =\tfrac{3+\sqrt{5}}{8} + \tfrac{t^2}{2 \sqrt{5}}  
 + \tfrac{3-\sqrt{5}}{40}t^4 +\left( \tfrac{4-\sqrt{5}}{5} t^2 + \tfrac{2-2\sqrt{5}}{5} t^3 w \right)(1-w^2)\\
 =  \tfrac{3+\sqrt{5}}{8} + \tfrac{t^2}{2 \sqrt{5}} + g(t,w). 
 \end{aligned}
\]
 A straightforward, though somewhat tedious, calculus computation shows that $g(t,w) \ge 0$ on $[0,1] \times [-1,1]$. Then the same arguments used in the proof of (i) imply that the Euclidean disk with center 
 \[  c= t - (1-t^2) \tfrac{t}{\sqrt{5}} \text{ and radius } R = (1-t^2) \sqrt{ \tfrac{3+\sqrt{5}}{8} + \tfrac{t^2}{2 \sqrt{5}} }\] 
 is in  $W(S_{\Theta_t}).$ As before, we can then solve \eqref{eqn:r} to recover a formula for the associated pseudohyperbolic radius:
 \[ r(t) =\frac{g_1(t) - \sqrt{g_2(t)}}{\sqrt{g_3(t)}},\]
 where 
 \[
 \begin{aligned}
g_1(t) &=  
  25 \sqrt{2} + 55 \sqrt{10} + 75 \sqrt{2} t^2 - 23 \sqrt{10} t^2 - 20 \sqrt{2} t^4 + 8 \sqrt{10}  t^4 \\
g_2(t) &= 100\Big( 75 - 25 \sqrt{5} - 178 t^2 + 
      78 \sqrt{5} t^2 + 233.4 t^4 - 105 \sqrt{5} t^4 \\
      & \qquad \qquad - 96.8 t^6 + 
      42.4 \sqrt{5} t^6 + 14.4 t^8 - 6.4 \sqrt{5} t^8\Big)  \\
      g_3(t) &= 40^2( 15 + 5 \sqrt{5} + 4 \sqrt{5} t^2). \\
\end{aligned} 
\]
If $r(t) = \cos( \tfrac{\pi}{5})$, algebraic manipulations imply that $t$ is also a zero of 
\[  (g_1(t)^2+g_2(t) -\cos(\tfrac{\pi}{5})^2g_3(t))^2-4g_2(t) g_1(t)^2,\]
which is a degree $10$ polynomial with a factor of $t^2$, so it has a double zero at $t=0$. One can use numerical software to see that the other $8$ zeros of this polynomial lie far outside of the interval $[0,1]$.   As $r(\tfrac{1}{2}) >  \cos( \tfrac{\pi}{5})$, we can thus deduce that $r(t) \ge \cos( \tfrac{\pi}{5})$ for all $t \in (0,1)$ and so, $W(S_{\Theta_t})$ contains a disk with pseudohyperbolic radius at least $\cos(\tfrac{\pi}{5})$.  \end{proof}

 \begin{remark} \label{rem:n5} If $n=5$, one can similarly parameterize $C_t$ with $f(s)=\tfrac{\sqrt{3}}{2} e^{is} -\tfrac{7t}{12} e^{i(2s)} +\tfrac{t^2\sqrt{3}}{6} e^{i(3s)} -\tfrac{t^3}{12} e^{i(4s)}$ and recall that $\hat{c}_t :=- \tfrac{7t}{12}-\tfrac{t^3}{12}$ from \eqref{eqn:center} is in $W(A_t)$. Unfortunately,
$ | f(s) - \hat{c}_t|^2$
 does not simplify as much as in the $n=3$ and $n=4$ cases and so, we cannot proceed as in the proof of Theorem \ref{prop: phd}. Instead, we can rephrase the investigation as: ``Is the disk with pseudohyperbolic radius $\cos(\tfrac{\pi}{6})$ and Euclidean center
 \[ c(t) = t-(1-t^2)(\tfrac{7t}{12} + \tfrac{t^3}{12})\]
 inside the convex hull of $t + (1-t^2)C_t$?''  To prove this, one can use \eqref{eqn:r} to solve for the Euclidean radius $R(t)$ of that disk to get
 \[ R(t) = \tfrac{1}{12} (7 \sqrt{3} - \sqrt{(1 + 5 t^2 + t^4) (3 + 10 t^2 + 7 t^4 + t^6)}).\]
 Then to deduce the desired disk is inside $t + (1-t^2)C_t$,  one just needs to show that the Euclidean disk with center $\hat{c}_t $ and radius $R(t)/(1-t^2)$ is inside the convex hull of $C_t$. This will follow if one can establish
 \[  | f(s) - \hat{c}_t|^2 \ge \tfrac{R(t)^2}{(1-t^2)^2}.\]
This inequality can be checked in Mathematica, which indicates that for $0.01 \le t \le 0.99$ the inequality holds. It seems very likely that the inequality holds for all $t\in [0,1]$, but the Mathematica minimize command appears less stable near the endpoints $t=0,1$. This indicates that, when $n=5$, there should generally be a pseudohyperbolic disk of radius $\cos(\tfrac{\pi}{6})$ inside $W(S_{\Theta_t})$.
 \end{remark}
 
 The following corollary is an immediate application of Theorem~\ref{prop: phd} and Corollary \ref{cor:main}. The $n=3$ case also follows from results in \cite{C13}. The $n=4$ case appears to be new.
 
 \begin{corollary}  Let $\Theta, B$ be finite Blaschke products with $\deg B < \deg \Theta$. Let $\Theta$ be unicritical with $\deg \Theta$ equaling $3$ or $4$. Then
 \[  \max \{|B(z)|: z \in W(S_\Theta) \} \ge \tfrac{1}{2}.\]  \end{corollary}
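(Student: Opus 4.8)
The plan is to simply combine Theorem~\ref{prop: phd} with Corollary~\ref{cor:main}, checking only that the pseudohyperbolic disks produced by the former are large enough to feed into the latter. Recall that Corollary~\ref{cor:main} states that for finite Blaschke products $B,\Theta$ with $\deg B < \deg\Theta =: n$, if $W(S_\Theta)$ contains a pseudohyperbolic disk $D_\rho(z_0, (\tfrac12)^{1/(n-1)})$, then $\max\{|B(z)|: z\in W(S_\Theta)\} \ge \tfrac12$. Since $W(S_\Theta)\subseteq \mathbb{D}$, there is no issue with the domain of $B$. So the proof splits into the two degree cases.

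First I would treat $\deg\Theta = 3$. Here $n=3$, so the radius required in Corollary~\ref{cor:main} is $(\tfrac12)^{1/(n-1)} = (\tfrac12)^{1/2} = \tfrac{1}{\sqrt2}$. By Theorem~\ref{prop: phd}(i), $W(S_\Theta)$ always contains a pseudohyperbolic disk of radius exactly $\cos(\tfrac{\pi}{4}) = \tfrac{1}{\sqrt2}$, so Corollary~\ref{cor:main} applies verbatim and gives the conclusion.

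Next I would treat $\deg\Theta = 4$. Here $n=4$, so the required radius is $(\tfrac12)^{1/3}$. By Theorem~\ref{prop: phd}(ii), $W(S_\Theta)$ contains a pseudohyperbolic disk of radius $\cos(\tfrac{\pi}{5}) = \tfrac14(1+\sqrt5)$. I would then invoke the numerical comparison $\cos(\tfrac{\pi}{5}) = \tfrac14(1+\sqrt5) \approx 0.809 > 0.794 \approx (\tfrac12)^{1/3}$, together with the elementary fact that pseudohyperbolic disks sharing a common pseudohyperbolic center are nested in their pseudohyperbolic radii: if $r' < r$ then $D_\rho(z_0,r') \subseteq D_\rho(z_0,r)$. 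Consequently $W(S_\Theta)$ also contains a pseudohyperbolic disk $D_\rho(z_0,(\tfrac12)^{1/3})$, and Corollary~\ref{cor:main} again yields $\max\{|B(z)|: z\in W(S_\Theta)\} \ge \tfrac12$.

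There is essentially no real obstacle in this step: all of the substantive work is already packaged into Theorem~\ref{prop: phd} (constructing the curve $\mathcal{C}_t$ and showing the relevant Euclidean, hence pseudohyperbolic, disks lie in its convex hull) and into Corollary~\ref{cor:main}. The only small point that must be handled carefully is the inequality $\cos(\tfrac{\pi}{5}) > (\tfrac12)^{1/3}$ in the degree-$4$ case, which ensures the slightly weaker hypothesis of Corollary~\ref{cor:main} is met even though Theorem~\ref{prop: phd}(ii) is phrased in terms of the value $\cos(\tfrac{\pi}{5})$ rather than $(\tfrac12)^{1/3}$.
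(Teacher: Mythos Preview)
Your proposal is correct and follows exactly the same approach as the paper, which simply states that the corollary is an immediate application of Theorem~\ref{prop: phd} and Corollary~\ref{cor:main}. Your added care in verifying the numerical inequality $\cos(\tfrac{\pi}{5}) > (\tfrac12)^{1/3}$ in the $n=4$ case is a detail the paper leaves implicit.
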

 
These results motivate questions about when numerical ranges of compressions of shifts contain large pseudohyperbolic disks. These questions are explored more in Section \ref{sec:examples}. 

For now, recall that Crouzeix's conjecture states: given a square matrix $A$, the best constant $C$ for which 
 \begin{equation}
 \label{eq:cc}
\|p(A)\| \le C \max_{z \in W(A)} |p(z)| 
\end{equation} for all polynomials $p$ is $C = 2$. Using Proposition~\ref{prop:Ct}, we can study Crouzeix's conjecture for compressed shifts associated to unicritical $\Theta$ with degree $3,4,5$. We first obtain the following:
 
 \begin{proposition}\label{prop:C} Let $\Theta$ be unicritical with $\deg \Theta=3$. Then for every polynomial $p \in \mathbb{C}[z]$,
  \[\|p(S_{\Theta})\| \le \frac{ \sqrt{13 + \sqrt{41}}}{2} \max_{z \in W(S_{\Theta})}|p(z)|.\]
 \end{proposition}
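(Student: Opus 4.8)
The plan is to show that $W(S_\Theta)$ is a $K$-spectral set for $S_\Theta$ with $K=\tfrac12\sqrt{13+\sqrt{41}}$, by conjugating $S_\Theta$ onto a matrix for which a Euclidean disk sitting inside $W(S_\Theta)$ is a genuine (one-)spectral set. First I would normalize: by Gaaya's identity $W(S_{\Theta_{z_0}})=e^{i\arg z_0}W(S_{\Theta_{|z_0|}})$ together with the fact that the unimodular constant in $\Theta$ and the phase of $z_0$ rotate $S_\Theta$ and $W(S_\Theta)$ simultaneously, the asserted inequality is rotation invariant, so we may assume $\Theta=\Theta_t$ with $t\in[0,1)$. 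Then $S_{\Theta_t}$ is unitarily equivalent to $M_t=tI+(1-t^2)A_t$, with $A_t$ the $3\times3$ KMS matrix of \eqref{eqn:At}, and $W(S_{\Theta_t})=t+(1-t^2)W(A_t)$.

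Next I would pin down a concrete disk $\overline{D_t}\subseteq W(S_{\Theta_t})$. The $n=3$ curve of Proposition \ref{prop:Ct} is $\tfrac{1}{\sqrt2}e^{is}-\tfrac{t}{4}e^{2is}$ (Remark \ref{rem:ct}), and the computation \eqref{eqn:Ct3} carried out in the proof of Theorem \ref{prop: phd}(i) already gives $\overline{D_{1/\sqrt2}(-t/4)}\subseteq\overline{\mathrm{conv}(C_t)}\subseteq W(A_t)$; applying $t+(1-t^2)(\cdot)$ yields $\overline{D_t}:=\overline{D_{(1-t^2)/\sqrt2}\big(t-(1-t^2)\tfrac{t}{4}\big)}\subseteq W(S_{\Theta_t})$.

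The heart of the argument is to produce an invertible $3\times3$ matrix $X_t$ (the one recorded in \eqref{eqn:Xt3}) with $\big\|X_t^{-1}\big(A_t+\tfrac{t}{4}I\big)X_t\big\|\le\tfrac{1}{\sqrt2}$; equivalently, writing $N_t:=X_t^{-1}M_tX_t$ and letting $c_t,R_t$ be the center and radius of $\overline{D_t}$, one has $\big\|(N_t-c_tI)/R_t\big\|\le1$. Granting this, von Neumann's inequality applied to the contraction $(N_t-c_tI)/R_t$ and the polynomial $w\mapsto p(c_t+R_tw)$ shows $\overline{D_t}$ is a one-spectral set for $N_t$, so for every polynomial $p$,
\[ \|p(S_{\Theta_t})\|=\|X_t\,p(N_t)\,X_t^{-1}\|\le\|X_t\|\,\|X_t^{-1}\|\max_{z\in\overline{D_t}}|p(z)|\le\|X_t\|\,\|X_t^{-1}\|\max_{z\in W(S_{\Theta_t})}|p(z)|, \]
so $W(S_{\Theta_t})$ is a $\|X_t\|\|X_t^{-1}\|$-spectral set for $S_{\Theta_t}$. (That such an $X_t$ exists with small $\|X_t\|\|X_t^{-1}\|$ is plausible a priori: at $t=0$ one has $A_0=J_3$ with numerical radius $\cos\tfrac{\pi}{4}=\tfrac{1}{\sqrt2}$, so $\overline{D(0,1/\sqrt2)}=W(J_3)$ is a $2$-spectral set for $J_3$, and Paulsen's similarity theorem then produces an $X_0$ of condition number $2$ conjugating $J_3$ to a $\tfrac{1}{\sqrt2}$-contraction; for $t>0$ one guesses $X_t$ from the structure $A_t=J_3-tJ_3^2$ and the Haagerup--de la Harpe vectors underlying $C_t$, accepting a mild degradation of the condition number.) Finally I would run the singular-value computation on the explicit $X_t$ to get $\|X_t\|\|X_t^{-1}\|=\tfrac12\sqrt{12+t^2+\sqrt{16+24t^2+t^4}}$ and bound it on $[0,1)$ by $\tfrac12\sqrt{13+\sqrt{41}}$ using $12+t^2\le13$ and $16+24t^2+t^4\le41$, which is exactly the claimed constant.

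The main obstacle is the construction and verification in the third step: finding the right $X_t$ so that conjugation drags $A_t+\tfrac{t}{4}I$ inside $\overline{D(0,1/\sqrt2)}$ in operator norm while keeping $\mathrm{cond}(X_t)$ as small as the stated formula is the one genuinely nontrivial point. Once $X_t$ is in hand, the norm bound $\big\|X_t^{-1}(A_t+\tfrac{t}{4}I)X_t\big\|\le\tfrac{1}{\sqrt2}$ reduces to checking positive semidefiniteness of a single $3\times3$ Hermitian matrix depending on $t$, and everything else---the rotation reduction, the disk containment, the transfer via von Neumann's inequality, and the final elementary estimate---is routine.
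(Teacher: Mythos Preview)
Your outline diverges from the paper at the crucial step, and the divergence is fatal. The paper does \emph{not} try to make $X_t^{-1}(A_t+\tfrac{t}{4}I)X_t$ a $\tfrac{1}{\sqrt2}$-contraction; instead it uses the functional-calculus identity $A_t=g(B_t)$ with $g(z)=\tfrac{1}{\sqrt2}z-\tfrac{t}{4}z^2$ and $B_t=X_tJ_3X_t^{-1}$, so that $p(A_t)=(p\circ g)(B_t)=X_t\,(p\circ g)(J_3)\,X_t^{-1}$, and then applies von Neumann to the \emph{genuine} contraction $J_3$ and the composed polynomial $p\circ g$. The relevant subset of $W(A_t)$ is the full image $g(\mathbb{D})$ (shown to lie in $W(A_t)$ via a Krein--Milman argument), not the inscribed disk $\overline{D_{1/\sqrt2}(-t/4)}$.

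Your replacement of $g(\mathbb{D})$ by the smaller disk forces you to demand $\bigl\|X_t^{-1}(A_t+\tfrac{t}{4}I)X_t\bigr\|\le\tfrac{1}{\sqrt2}$, and with the $X_t$ of \eqref{eqn:Xt3} this is simply false for every $t\in(0,1)$. Indeed $X_t^{-1}A_tX_t=g(J_3)$, so the matrix in question is
\[
g(J_3)+\tfrac{t}{4}I=\begin{pmatrix} t/4 & 1/\sqrt2 & -t/4\\ 0 & t/4 & 1/\sqrt2\\ 0 & 0 & t/4\end{pmatrix},
\]
whose third column has Euclidean norm $\sqrt{\tfrac12+\tfrac{t^2}{8}}>\tfrac{1}{\sqrt2}$ whenever $t>0$; hence the operator norm exceeds $\tfrac{1}{\sqrt2}$. (Equality holds only at $t=0$, where $g(J_3)=\tfrac{1}{\sqrt2}J_3$.) So the ``positive semidefiniteness check'' you defer would fail, and the disk $\overline{D_{1/\sqrt2}(-t/4)}$ is \emph{not} a $1$-spectral set for $X_t^{-1}A_tX_t$. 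A different similarity $X$ might make $A_t+\tfrac{t}{4}I$ a $\tfrac{1}{\sqrt2}$-contraction, but there is no reason its condition number would equal $\tfrac12\sqrt{12+t^2+\sqrt{16+24t^2+t^4}}$, so you lose the stated constant. The missing idea is precisely the composition trick: keep $g(\mathbb{D})$ rather than the disk, and apply von Neumann to $J_3$ with $p\circ g$.
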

 
 Before proceeding to the proof, a few comments are in order. First in \cite{C13}, Crouzeix proved that the numerical range of a $3 \times 3$ nilpotent matrix is a $2$-spectral set; that is, in this case the constant $C$ in \eqref{eq:cc} can be taken to be $2$. Because $A_t$ from \eqref{eqn:At} is nilpotent, that establishes Proposition \ref{prop:C} but with constant $2$.
 
 Our proof here is simpler but gives the weaker constant  $\frac{ \sqrt{13 + \sqrt{41}}}{2} \approx 2.20245$. However, with some reliance on Mathematica, our arguments do extend to the $n=4$ and $n=5$ cases. In those situations, there is a range of $t$-values (i.e. a range for the modulus of the zero of the unicritical $\Theta$) where the constant in \eqref{eq:cc} with $A = S_{\Theta}$ is less than $2$. For the proofs, we require the following remark.
 
 \begin{remark}\label{rem:KM} Fix  $t\in [0,1),$ recall the curve $C_t \subseteq W(A_t)$ from Proposition \ref{prop:Ct} where $A_t$ is defined in \eqref{eqn:At}, and let $g$ be a polynomial with $g(\mathbb{T}) = C_t$. Here we claim that $g(\mathbb{D})$ is also contained in $W(A_t)$. To see this, note that the boundary of $g(\mathbb{D})$ is contained in $C_t$ by the open mapping theorem. Let $K$ denote the convex hull of $\overline{g(\mathbb{D})}$. Since $K$ is compact and convex, the Krein-Milman theorem implies that $K$ is the convex hull of its extreme points. If $z$ is an extreme point of $K$, then $z$ is in the boundary of $g(\mathbb{D}) \subseteq C_t$. Therefore $g(\mathbb{D})$ is in the convex hull of $C_t$ and, by the convexity of $W(A_t)$, we have $g(\mathbb{D}) \subseteq W(A_t)$.\end{remark}
 
 \begin{proof}[Proof of Proposition~\ref{prop:C}] Without loss of generality, we can assume the unicritical $\Theta$ has its only zero $t\in [0,1)$ and will establish the inequality for $M_t$, the previously-discussed matrix representation of $S_\Theta$.  For constants $\alpha, \beta$ and a matrix $A$, we have $W(\alpha I + \beta A) = \alpha + \beta W(A)$. Therefore, the matrix $A_t$ from \eqref{eqn:At} satisfies equation \eqref{eq:cc} with constant $C$ if and only if $M_t$ satisfies the equation with the same constant. Thus, we work with the matrix $A_t$. 
 
By Remark~\ref{rem:ct}, $C_t$ is parameterized by $\tfrac{1}{\sqrt{2}} e^{is} - \tfrac{t}{4} e^{i(2s)}$, for $s \in [0, 2\pi)$. Then if we define $g$ by $g(z) = \frac{1}{\sqrt{2}} z - \frac{t}{4} z^2$, we have $g(\mathbb{T}) = C_t$.  By Remark \ref{rem:KM}, we immediately have $g(\mathbb{D}) \subseteq W(A_t)$. 
 Let $J_3$ denote the  $3 \times 3$ Jordan block with zeros on the diagonal and define the matrices
  \begin{equation} \label{eqn:Xt3} B_t =\begin{pmatrix}
 0 & \sqrt{2} & -\tfrac{t}{\sqrt{2}}\\[.3 em]
 0 & 0 & \sqrt{2}\\
 0 & 0 & 0
 \end{pmatrix} ~\mbox{and}~ X_t = \begin{pmatrix}
1& 0 & 0\\
 0 & \tfrac{1}{\sqrt{2}} & \tfrac{t}{4}\\[.2 em]
 0 & 0 & \tfrac{1}{2}
 \end{pmatrix}.\end{equation}
 Then it can be checked that  $g(B_t) = A_t$ and  $B_t = X_t \cdot J_3 \cdot X_t^{-1}$, and for $p \in \mathbb{C}[z]$ we have 
 \begin{align}
& \|p(A_t)\| =  \|(p \circ g)(B_t)\| = \|X_t (p \circ g)(J_3) X_t^{-1}\| &\notag\\
 & \le   \|X_t\| \cdot \|X_t^{-1}\| \cdot \|(p \circ g)(J_3)\| \le \|X_t\| \cdot \|X_t^{-1}\| \sup_{z \in \mathbb{D}} |(p \circ g)(z)|,
 \end{align} where we used von Neumann's inequality in the last line. 
We know that $z \in \mathbb{D}$ implies that $g(z) \in W(A_t)$. Thus, 
 \[\|p(A_t)\| \le  \|X_t\| \cdot \|X_t^{-1}\| \sup_{z \in W(A_t)} |p(z)|.\]
 A Mathematica computation shows that 
 \[ \|X_t\| \cdot \|X_t^{-1}\| = \tfrac{1}{2} \sqrt{12 + t^2 + \sqrt{16 + 24 t^2 + t^4}},\]
 which  is increasing in $t$ and satisfies
 \[2 \le \|X_t\| \cdot \|X_t^{-1}\| \le  \frac{ \sqrt{13 + \sqrt{41}}}{2},\]
 which completes the proof.
 \end{proof}

  \begin{remark} Let $\Theta_t$ denote a unicritical Blascke product with zero at  $t \in [0,1)$ and degree $n$. By Remark \ref{rem:ct}, we have tractable formulas for $C_t$ when $n=4$ and $n=5$. In these cases, we can adapt the arguments from Proposition
  \ref{prop:C} to show that $W(S_{\Theta_t})$ is a $\|X_t\| \cdot \|X_t^{-1}\|$ spectral set for $S_{\Theta_t}$, for a (soon-to-be specified) matrix $X_t$. As before, we work with $A_t$.
  
  First, for the $n=4$ case, set $g(z) = \tfrac{1}{4}(1 + \sqrt5) z -\tfrac{t}{\sqrt{5}} z^2 + \tfrac{t^2}{4}\left(1-\tfrac{1}{\sqrt{5}}\right)z^3$ and define
  \[ B_t = \begin{pmatrix} 0 & -1 + \sqrt{5} & (9 - \tfrac{21}{\sqrt{5}}) t & 
    \tfrac{1}{5} (-537 + 241 \sqrt{5}) t^2 \\[.2 em]
    0 & 0 & -1 + \sqrt{5} & (9 - \tfrac{21}{\sqrt{5}}) t \\[.2 em]
     0 & 0 & 0 & -1 + \sqrt{5}\\
     0 & 0 &0 &0 \end{pmatrix}  \]
  and 
  \begin{equation} \label{eqn:Xt4} X_t = \begin{pmatrix} 1 & 0 & 0 & 0 \\
   0 & \tfrac{1}{4} (1 + \sqrt{5}) & -\tfrac{3}{40} (-5 + \sqrt{5}) t & -\tfrac{t^2}{
  8 \sqrt{5}} \\[.2 em]
    0 & 0 & \tfrac{1}{8} (3 + \sqrt{5}) & \tfrac{3 t}{4 \sqrt{5}} \\[.2 em]
     0 & 0 & 0 & \tfrac{1}{8} (2 + \sqrt{5}) \end{pmatrix}\end{equation}
  Then, $g(B_t ) = A_t$,   $B_t = X_t \cdot J_4 \cdot X_t^{-1}$, and 
  the same arguments as in the proof of Proposition \ref{prop:C} imply that for all polynomials $p$, 
   \[\|p(A_t)\| \le  \|X_t\| \cdot \|X_t^{-1}\| \sup_{z \in W(A_t)} |p(z)|.\]
However, for $n=4$, there is not a nice formula for $\|X_t\| \cdot \|X_t^{-1}\|$. Still, the Mathematica maximization tool reveals that 
  for $t \in (0,0.9999)$,  
  \[ \|X_t\| \cdot \|X_t^{-1}\| \le 2.38\]
  and indeed, if $ t \in (0, .42)$, $\|X_t\| \cdot \|X_t^{-1}\| <2$. Thus, this argument shows that for $n=4$, Crouzeix's conjecture holds for all $S_{\Theta_t}$ with $t\in (0, 0.42).$

  Basically, the same argument works if $n=5.$ In this case, $g(z) = \tfrac{\sqrt{3}}{2}z -\tfrac{7t}{12} z^2 +\tfrac{t^2\sqrt{3}}{6} z^3 -\tfrac{t^3}{12} z^4,$
\[ B_t = \begin{pmatrix} 0 & \tfrac{2}{\sqrt{3}} & -\tfrac{4 t}{9 \sqrt{3}} & \tfrac{34 t^2}{
   81 \sqrt{3}} & -\tfrac{278 t^3}{729 \sqrt{3}} \\[.3 em]
    0 & 0 & \tfrac{2}{\sqrt{3}} & -\tfrac{4 t}{9 \sqrt{3}} & \tfrac{34 t^2}{
   81 \sqrt{3}} \\[.2 em]
     0 & 0 & 0 &  \tfrac{2}{\sqrt{3}} & -\tfrac{4 t}{9 \sqrt{3}} \\[.2 em]
     0 & 0 & 0 & 0 &  \tfrac{2}{\sqrt{3}}  \\[.2 em]
     0 &  0 &  0 & 0 & 0 \end{pmatrix} \]
and 
\begin{equation} \label{eqn:Xt5} X_t = \begin{pmatrix} 1 & 0 & 0 &0 & 0 \\
 0 & \tfrac{\sqrt{3}}{2} & \tfrac{t}{6} & -\tfrac{t^2}{8 \sqrt{3}} &\tfrac{ t^3}{
  144} \\[.2 em]
  0 & 0 & \tfrac{3}{4} & \tfrac{t}{2 \sqrt{3}} & -\tfrac{7 t^2}{72} \\[.4 em]
  0 & 0 & 0 & \tfrac{3 \sqrt{3}}{8} & \tfrac{3 t}{8} \\[.1 em] 
  0 & 0 & 0 & 0 & \tfrac{9}{16}\end{pmatrix}.\end{equation}
Then, $g(B_t ) = A_t$,   $B_t = X_t \cdot J_5 \cdot X_t^{-1}$, and 
  the same arguments imply that for all polynomials $p$, 
   \[\|p(A_t)\| \le  \|X_t\| \cdot \|X_t^{-1}\| \sup_{z \in W(A_t)} |p(z)|.\]
As in the $n=4$ case, for $n=5$, there is not a nice formula for $\|X_t\| \cdot \|X_t^{-1}\|$. Still, the Mathematica maximization tool reveals that 
  for $t \in (0.0001,1)$,  
  \[ \|X_t\| \cdot \|X_t^{-1}\| \le 2.51\]
  and indeed, if $ t \in (0.0001, 0.5)$, $\|X_t\| \cdot \|X_t^{-1}\| <2$. Here, we consider $t$ values away from $0$ because the maximization tool seems to 
  be somewhat unstable near $t=0$. Regardless, this argument shows that, for $n=5$, Crouzeix's conjecture holds for all $S_{\Theta_t}$ with $t\in (.0001, 0.5).$
  
For $n \ge 6$, the methods we used to compute $B_t$ and $X_t$ are no longer manageable. Still, we conjecture that a similar argument should work in theory, if not in practice, for these higher values of $n$.
  \end{remark}

\section{Pseudohyperbolic Disks and Numerical Ranges} \label{sec:examples}

Several of our results, particularly Corollary \ref{cor:main} and Theorem \ref{prop: phd}, require or show the existence of large pseudohyperbolic disks contained inside the numerical ranges $W(S_\Theta)$. These results lead naturally to the following question:

\medskip
\noindent
If $\Theta$ is a finite Blaschke product with $\deg \Theta =n$, does $W(S_\Theta)$ necessarily contain a pseudohyperbolic disk with pseudohyperbolic radius $(\frac{1}{2})^{1/(n-1)}$?
\medskip

However, the answer to this question is no! To illustrate this, in the following example, we provide a family of degree-$2$ Blaschke products $\Theta$ such that $W(S_\Theta)$ does not contain any pseudohyperbolic disk of the form $D_\rho(z_0, \tfrac{1}{2}).$  However, since $\dim K_\Theta=2$, the $2\times 2$ result in \cite{Crouzeix1} still implies that $S_\Theta$ satisfies the inequality in Crouzeix's conjecture.

\begin{example}  Set $\Theta(z) = \frac{z^2-t^2}{1-t^2 z^2}$ for $t \in (0,1)$. Then one matrix representation of $S_\Theta$ is
\[ M_\Theta :=  \begin{bmatrix}  t & 1-t^2 \\ 0 & -t \end{bmatrix} \]
and the elliptical range theorem implies that $W(S_\Theta)$ is the elliptical disk with foci $\pm t$ and minor axis $1-t^2$. Equivalently, $W(S_\Theta)$ is exactly the set of points $z = x +i y$ satisfying
\begin{equation} \label{eqn:ellipse} \frac{ 4 x^2}{(1+t^2)^2} + \frac{ 4 y^2 }{(1-t^2)^2} \le 1.\end{equation}
Assume that some  $D_\rho(z_0, \tfrac{1}{2}) \subseteq W(S_\Theta)$ for $z_0 = x_0 + i y_0$. We show this leads to a contradiction for $t >
\sqrt{3/4}$. First, note that $z_0 \in W(S_\Theta)$ and so \eqref{eqn:ellipse} implies that 
\begin{equation} \label{eqn:tz0} x_0^2 + y_0^2 \le \tfrac{1}{4}(1+t^2)^2 \text{ and so, } 1 - |z_0|^2 \ge  1-  \tfrac{1}{4}(1+t^2)^2.\end{equation}
Recall from \eqref{eqn:phd1} that $D_\rho(z_0, \tfrac{1}{2})$ is also a Euclidean disk with center $c$ and radius $R$ defined by 
\[ c  = \frac{ \tfrac{3}{4} z_0}{1-  \tfrac{1}{4}|z_0|^2} \ \text{ and } R = \frac{ \tfrac{1}{2}(1-|z_0|^2)}{1-  \tfrac{1}{4} |z_0|^2}.\] 
 By the assumption that $D_\rho(z_0, \tfrac{1}{2}) \subseteq W(S_\Theta)$, we see that $ c \pm i R$ must satisfy \eqref{eqn:ellipse}.  So, with $|y| = R$ in \eqref{eqn:ellipse} we immediately obtain
\[ \frac{4 R^2}{(1-t^2)^2} \le 1.\]
It must be the case that either $|z_0| <  \sqrt{3/4}$ or $|z_0| \ge  \sqrt{3/4}$. If $|z_0| < \sqrt{3/4}$, then 
\[  \frac{(1- \frac{3}{4})^2}{(1-t^2)^2} \le \frac{(1-|z_0|^2)^2}{(1-  \tfrac{1}{4}|z_0|^2)^2 (1-t^2)^2} = \frac{4 R^2}{(1-t^2)^2} \le 1,\] 
which only holds if $t^2 \le  \frac{3}{4}$, or $t \le \sqrt{3/4}.$ Similarly, if $|z_0| \ge  \sqrt{3/4}$, \eqref{eqn:tz0} implies 
\begin{equation} \label{eqn:tz1} \left(\frac{16}{13}\right)^2  \frac{(1- \tfrac{1}{4}(1+t^2)^2)^2}{ (1-t^2)^2} \le \frac{(1-|z_0|^2)^2}{(1-  \tfrac{1}{4} |z_0|^2)^2 (1-t^2)^2} \le 1.\end{equation}
A computation shows that 
\[ \lim_{t\rightarrow 1} \frac{1- \tfrac{1}{4}(1+t^2)^2}{1-t^2}  =1,\]
and so for $t$ sufficiently close to $1$, \eqref{eqn:tz1} has to fail. More specifically, one can check that \eqref{eqn:tz1} only holds if $t \le  \tfrac{1}{2}.$ Combining our two computations implies that if $t > \sqrt{3/4}$, both inequalities fail and then $W(S_\Theta)$  cannot contain a pseudohyperbolic disk $D_\rho(z_0, 1/2)$. 
\end{example}

In contrast, Theorem \ref{prop: phd} shows that if $\Theta$ is unicritical and $\deg \Theta =3$ or $\deg \Theta =4$, then it includes a pseudohyperbolic disk of the radius $(\frac{1}{2})^{1/2}$ or $(\frac{1}{2})^{1/3}$ respectively. The following example shows that (unsurprisingly) this result also holds for unicritical $\Theta$ with $\deg \Theta =2$.

\begin{example} Set $\Theta_t(z) =\left( \frac{z-t}{1-t z}\right)^2$ for $t \in [0,1)$, so that $\Theta_t$ is a degree $2$ unicritical Blaschke product with its zero at $t$. We will show that there is a pseudohyperbolic disk $D_\rho(z_0, \tfrac{1}{2})$ contained in the numerical range $W(S_{\Theta_t})$.
One matrix representation of $S_{\Theta_t}$ is
\[ M_{\Theta_t} =  \begin{bmatrix}  t & 1-t^2 \\ 0 & t \end{bmatrix}. \]
Then the elliptical range theorem implies that $W(S_{\Theta_t})$ is the Euclidean disk whose center $c(t) = t$ and radius $R(t) = \tfrac{1}{2}(1-t^2)$. This Euclidean disk is also a pseudohyperbolic disk $D_\rho(z_0(t), r(t))$ with center $z_0(t) \in \mathbb{R}^+$ and radius $r(t)$ that must satisfy the equations  \eqref{eqn:z0} and \eqref{eqn:r}. Solving those equations gives $r(0) = \tfrac{1}{2}, z_0(0) =0$, and for $t \ne 0$, 
\[ 
\begin{aligned}
r(t) &=\tfrac{1}{4}\left( 5 - t^2 -\sqrt{(1-t^2)(9-t^2)}\right) \\
z_0(t) & = \tfrac{1}{8t}\left(3 + 6 t^2 - t^4 -(1-t^2)\sqrt{(1-t^2)(9-t^2)}\right).
\end{aligned}\]
A calculus computation implies that $r(t)$ is increasing in $t$ on $[0,1)$ and $r(0) = \tfrac{1}{2}$. Thus, each $W(S_{\Theta_t})$ equals some $D_\rho(z_0(t), r(t))$ with $r(t) \ge \tfrac{1}{2}$, which gives the desired result. Moreover, if $t \ne 0$, then $r(t) > \tfrac{1}{2}$ and so, we can perturb the zeros slightly from $t$ to some $t_1, t_2$ and the resulting $\Theta$ will still include some  $D_\rho(z_0, \tfrac{1}{2})$ in its associated numerical range $W(S_\Theta)$.
\end{example}

That example combined with Theorem \ref{prop: phd} motivates the following open question:

\begin{question} If $\Theta$ is unicritical with $\deg \Theta =n$,  does $W(S_\Theta)$ necessarily contain a pseudohyperbolic disk with pseudohyperbolic radius $(\frac{1}{2})^{1/(n-1)}$?\end{question}

We conjecture that the answer is yes. It is worth noting that these large pseudohyperbolic disks typically cannot be centered at the zero of the unicritical Blaschke product. It is easiest to see this by examining the degree-$2$ situation, as follows.

\begin{example} Assume $\Theta_t$ is unicritical with its zero at $t \in [0,1)$. Then $W(S_{\Theta_t})$ is the closed Euclidean disk with center $c_1= t$ and radius $R_1=\tfrac{1}{2}(1-t^2)$. Meanwhile using \eqref{eqn:phd1},  $D_\rho (t, \tfrac{1}{2} )$ is the Euclidean disk with center $c_2$ and radius $R_2$ given by
\[ c_2 = \frac{ \tfrac{3}{4}t}{1-  \tfrac{1}{4}t^2} \ \text{ and } R_2 = \frac{ \tfrac{1}{2}(1-t^2)}{1-  \tfrac{1}{4} t^2}.\] 
The boundary circles of two such Euclidean disks intersect in exactly two points (and hence, neither disk contains the other) if and only if 
\begin{equation} \label{eqn:circles} (R_1-R_2)^2 < |c_1-c_2|^2 < (R_1+R_2)^2.\end{equation}
Computing those quantities directly gives 
\[
R_1 + R_2 = \frac{(1-t^2)(8-t^2)}{2(4-t^2)}, \ \ \ R_2-R_1 = \frac{t^2(1-t^2)}{2(4-t^2)}, \   \ \ c_1 - c_2 = \frac{t(1-t^2)}{4-t^2},\]
and comparing them shows that \eqref{eqn:circles} 
holds as long as $t \ne 0.$ This shows that if $n=2$, $D_\rho\left(t, \tfrac{1}{2} \right)  \not \subseteq W(S_{\Theta_t})$ and  similarly, $ W(S_{\Theta_t}) \not \subseteq D_\rho\left(t,  \tfrac{1}{2}  \right)$. 
\end{example}

\section{Proof of Theorem \ref{HRrevised}}  \label{sec:RH} 
For completeness, we recall the original result of Horwitz and Rubel:

\begin{theorem}[\cite{HorwitzRubel86}] Let $A$ and $B$ be two monic Blaschke products of degree $n$. Suppose that there are $n$ distinct points $\lambda_1, \ldots, \lambda_n$ in $\mathbb{D}$ such that $A(\lambda_j) = B(\lambda_j)$ for $j = 1, \ldots, n$. Then $A = B$. \end{theorem}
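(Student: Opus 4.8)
The plan is to argue by contradiction through the polynomial identity underlying $A(\lambda_j)=B(\lambda_j)$, which is the mechanism used by Horwitz and Rubel (and which also handles the multiplicity version needed for Theorem~\ref{HRrevised} with essentially no change). Write $A=q_A/p_A$ and $B=q_B/p_B$, where $q_A,q_B$ are the monic degree-$n$ numerator polynomials and $p_A(z)=z^n\overline{q_A(1/\bar z)}$, $p_B(z)=z^n\overline{q_B(1/\bar z)}$ are the associated conjugate-reciprocal denominators; thus $p_A(0)=p_B(0)=1$, $p_A$ and $p_B$ are zero-free on $\overline{\mathbb{D}}$, and dually $q_A(z)=z^n\overline{p_A(1/\bar z)}$, $q_B(z)=z^n\overline{p_B(1/\bar z)}$. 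Assuming $A\neq B$, set
\[ N:=q_Ap_B-q_Bp_A, \]
so $N\not\equiv0$, $\deg N\le 2n$, and $A-B=N/(p_Ap_B)$. Since $p_Ap_B$ has no zeros in $\overline{\mathbb{D}}$, the zeros of $A-B$ in $\mathbb{D}$ are exactly those of $N$, with multiplicity; in particular $N$ vanishes at $\lambda_1,\dots,\lambda_n$ with the given multiplicities, so $N$ has at least $n$ zeros in $\mathbb{D}$.

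The first genuine step is the functional equation $N(z)=-z^{2n}\,\overline{N(1/\bar z)}$, obtained by substituting the four reciprocity relations recorded above into the definition of $N$; this is the one place where monicity is essential (without it the arguments of $A$ and $B$ would obstruct the sign). From it I would read off the usual consequences: the nonzero zeros of $N$ occur in pairs $\{\zeta,1/\bar\zeta\}$ of equal multiplicity, zeros on $\mathbb{T}$ are self-paired, and if $z^m$ exactly divides $N$ then $\deg N=2n-m$. A counting argument then gives that $N$ has at most $n$ zeros in $\mathbb{D}$ with multiplicity ($m$ from the origin, at most $n-m$ from the reflection-paired remainder). Combining the two bounds, $N$ has exactly $n$ zeros in $\mathbb{D}$, they are precisely $\lambda_1,\dots,\lambda_n$ with their multiplicities, $\deg N=2n$, and
\[ N=c'\,P\,P^{\ast},\qquad P(z):=\prod_{j=1}^n(z-\lambda_j),\quad P^{\ast}(z):=z^n\overline{P(1/\bar z)}, \]
for some constant $c'\neq0$ (with the routine modification if some $\lambda_j=0$).

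The main obstacle is the final step: deriving a contradiction from this pinned-down form of $N$. All the polynomial identities above are by themselves consistent with such an $N$, so the contradiction must exploit the genuine Blaschke structure, and here I would follow the delicate boundary argument of Horwitz and Rubel. Because $A/B$ restricted to $\mathbb{T}$ has winding number $0$ about the origin, one may write $A/B=e^{iu}$ there with $u\colon\mathbb{T}\to\mathbb{R}$ continuous, whence $A-B=2i\,B\,e^{iu/2}\sin(u/2)$ on $\mathbb{T}$; equating this with $A-B=c'PP^{\ast}/(p_Ap_B)$ and using $|P^{\ast}|=|P|$, $|q_A|=|p_A|$, $|q_B|=|p_B|$ on $\mathbb{T}$ yields an identity for $\arg(A-B)$ together with a sign condition that forces $\sin(u/2)$ (equivalently, the imaginary part of $z^{-n}N(z)$) to have constant sign on $\mathbb{T}$. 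Feeding this back, together with the value of $N$ at $0$, the leading behaviour at $\infty$, and the fact that $P^{\ast}$, $p_A$, $p_B$ all have their zeros outside $\overline{\mathbb{D}}$, one concludes that $q_A-q_B$ — a polynomial of degree at most $n-1$ — vanishes identically, i.e.\ $A=B$. I expect this endgame to require the most care; everything preceding it is routine manipulation with conjugate-reciprocal polynomials, and passing to Theorem~\ref{HRrevised} only asks that multiplicities be carried through the zero count.
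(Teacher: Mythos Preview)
Your setup through the functional equation $N(z)=-z^{2n}\overline{N(1/\bar z)}$ and the reflection-pairing of zeros is correct and agrees with the paper. Your counting argument is also fine: it shows that if $A\neq B$ and they agree at $n$ points of $\mathbb{D}$ (with multiplicity), then $N$ has \emph{exactly} $n$ zeros in $\mathbb{D}$, $n$ outside $\overline{\mathbb{D}}$, and---crucially---\emph{none on} $\mathbb{T}$. Equivalently, you have shown $A(\lambda)\neq B(\lambda)$ for every $\lambda\in\mathbb{T}$.

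The gap is in your endgame. The ``sign condition forcing $\sin(u/2)$ to have constant sign'' is just a restatement of $N\neq 0$ on $\mathbb{T}$; it is consistent with everything you have derived, and the subsequent appeal to ``the value of $N$ at $0$, leading behaviour at $\infty$,\dots'' does not produce a contradiction. Comparing $N(0)$ with the leading coefficient only tells you $c'$ is purely imaginary, which you already know from the functional equation. There is no mechanism here that forces $q_A-q_B\equiv 0$.

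What actually closes the argument---both in the paper and in Horwitz--Rubel---is Lemma~\ref{lem:onemore}: for monic $A,B$ of equal degree there always exists $\lambda\in\mathbb{T}$ with $A(\lambda)=B(\lambda)$. Its proof is a short mean-value argument for a harmonic branch of $\log G$, where $G(z)=\prod(1-a_jz)/\prod(1-b_jz)$ is holomorphic and zero-free near $\overline{\mathbb{D}}$. This lemma contradicts exactly the conclusion you reached ($N$ nonvanishing on $\mathbb{T}$), so once you have it the proof ends immediately: $N$ has the $n$ interior zeros, their $n$ reflections, \emph{and} the boundary zero $\lambda$, hence $2n+1$ zeros of a polynomial of degree at most $2n$, so $N\equiv 0$. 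The paper does not even bother to pin $N$ down as $c'PP^{\ast}$; the extra zero from the lemma makes that unnecessary. In short, you have done more work than needed on the structure of $N$ and omitted the one genuinely new ingredient---the existence of a common boundary value---that delivers the contradiction.
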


 The proof  given in \cite{HorwitzRubel86}  relies on the following lemma stated under the assumptions above. However, the lemma does not use the assumption that the points are distinct. Still, because this lemma is essential to the proof of Theorem~\ref{HRrevised}, we give a detailed proof below. Then we establish Theorem \ref{HRrevised}, which handles the case where $A$ and $B$ agree at $n$ (not necessarily distinct) points in $\mathbb{D}$, when those points are counted according to multiplicity.

\begin{lemma}[\cite{HorwitzRubel86}]\label{lem:onemore}
Let $A$ and $B$ be monic Blaschke products of degree $n$. Then there exists $\lambda \in \mathbb{T}$ such that $A(\lambda) = B(\lambda)$.
\end{lemma}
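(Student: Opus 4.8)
The plan is to analyze the behavior of the difference $A-B$ on the unit circle by passing to a ratio and invoking the argument principle. Write $A = \tfrac{q_A}{p_A}$ and $B = \tfrac{q_B}{p_B}$ with $q_A, p_A, q_B, p_B$ monic of degree $n$ (after clearing denominators), where $p_A(z) = z^n \overline{q_A(1/\bar z)}$ and similarly for $p_B$; this is the standard reflection relation for finite Blaschke products. Then $A(\lambda) = B(\lambda)$ for $\lambda \in \mathbb{T}$ is equivalent to $q_A(\lambda) p_B(\lambda) - q_B(\lambda) p_A(\lambda) = 0$. So it suffices to show that the polynomial $h(z) := q_A(z) p_B(z) - q_B(z) p_A(z)$, which has degree at most $2n$, has a zero on $\mathbb{T}$.

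The key observation is a self-inversive (conjugate-reciprocal) symmetry of $h$. Using $p_A(z) = z^n \overline{q_A(1/\bar z)}$ and the analogous identity for $p_B$, one computes that $z^{2n}\,\overline{h(1/\bar z)} = -h(z)$ for all $z$; that is, $h$ is anti-self-inversive. A degree count shows $h$ has degree at most $2n$, and in fact, since $A$ and $B$ are both monic, the top-degree terms and the constant terms of $q_A p_B$ and $q_B p_A$ match, so the actual relevant symmetric polynomial has its roots distributed symmetrically with respect to $\mathbb{T}$: if $\mu$ is a root of $h$, so is $1/\bar\mu$, with equal multiplicity. Therefore, if $h \not\equiv 0$, the roots of $h$ that do not lie on $\mathbb{T}$ come in pairs $\{\mu, 1/\bar\mu\}$, so an even number of roots lie off the circle. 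Combined with a parity argument on the total number of roots (using the precise degree of $h$, which one checks is odd — or more robustly, evaluating $h$ at $z=1$ and $z=-1$ and tracking signs of a suitable real-valued function), one concludes $h$ must have a root on $\mathbb{T}$.

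An alternative and perhaps cleaner route, which I would actually prefer to present, avoids polynomial bookkeeping: consider the function $\psi(z) := A(z)/B(z)$ on $\mathbb{T}$. Since $A, B$ are inner and have no poles on $\mathbb{T}$, $\psi$ is a continuous map $\mathbb{T} \to \mathbb{T}$, and because $A$ and $B$ both have degree $n$, the winding number of $\psi$ around $0$ is $n - n = 0$. A continuous map $\mathbb{T}\to\mathbb{T}$ of degree $0$ is not surjective — more to the point, it must take the value $1$ somewhere: a degree-$0$ map $\mathbb{T}\to\mathbb{T}$ lifts to a continuous real-valued function $\theta(e^{i\tau})$ with $\psi(e^{i\tau}) = e^{i\theta(\tau)}$ and $\theta(2\pi) = \theta(0)$, so $\theta$ is continuous and periodic, hence attains its max and min; since $\psi$ cannot be constantly equal to a value other than $1$ unless $A = cB$ (in which case $c=1$ by monicity and we're done trivially), by the intermediate value theorem applied to $\theta - \theta(0)$ — wait, more carefully: if $\psi$ omits the value $1$, then $\psi$ maps into $\mathbb{T}\setminus\{1\}$, which is contractible, forcing $\psi$ to be null-homotopic (consistent with degree $0$) but also forcing $A/B - 1$ to be nonvanishing on $\overline{\mathbb{D}}$ minus poles; instead I track that $A - B$ is a rational function whose numerator $q_A p_B - q_B p_A$ has all its $\mathbb{T}$-behavior governed by $\psi - 1$. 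The honest statement is: $\psi\colon \mathbb{T}\to\mathbb{T}$ has degree $0$, so it is homotopic to a constant; hence it attains $1$, because otherwise $\psi$ would be a degree-$0$ map missing a point, which is fine, so this needs the extra input that $\arg\psi$ is not monotone.

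The main obstacle, then, is making the winding/parity argument fully rigorous: degree $0$ alone does not force surjectivity, so I expect the cleanest rigorous path is the self-inversive polynomial argument — showing $h(z) := q_A(z)p_B(z) - q_B(z)p_A(z)$ is anti-self-inversive of odd degree (or computing $\lim_{r\to 1^-}$ of the argument of $A/B$ on $|z|=r$ and using that $(A/B)(0) = \prod(\text{zeros of }B)/\prod(\text{zeros of }A)$ is a point of modulus possibly $\neq 1$, so $A/B$ takes values both inside and outside $\mathbb{T}$ on $\overline{\mathbb{D}}$, hence by continuity equals a unimodular value somewhere, but that somewhere need not be on $\mathbb{T}$...). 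I will commit to the polynomial symmetry proof: verify $z^{2\deg}\overline{h(1/\bar z)} = \pm h(z)$, deduce off-circle roots pair up, pin down the parity of $\deg h$ using monicity, and conclude an odd number — hence at least one — of the roots lie on $\mathbb{T}$, which is exactly the desired $\lambda$.
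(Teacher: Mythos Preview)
Your self-inversive symmetry observation $z^{2n}\,\overline{h(1/\bar z)} = -h(z)$ is correct and is indeed used elsewhere in the paper, but the parity argument you commit to at the end does not go through. The degree of $h = q_A p_B - q_B p_A$ is \emph{even}, not odd: the coefficient of $z^{2n}$ is $(-1)^n(\overline{\prod b_j} - \overline{\prod a_j})$, so generically $\deg h = 2n$; and if that leading coefficient vanishes, the anti-self-inversive relation $c_{2n-k} = -\overline{c_k}$ forces $c_0 = 0$ as well, so you factor out $z$ from both ends and are left with a polynomial of degree $2n-2$. Iterating, the reduced polynomial always has even degree. Since off-circle roots pair as $\{\mu, 1/\bar\mu\}$, the number of roots on $\mathbb{T}$ is even too --- possibly zero, as far as parity alone can tell. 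So ``odd degree $\Rightarrow$ at least one root on $\mathbb{T}$'' is not available here. (Concretely, for $n=1$ you get a quadratic, and your argument would say nothing.) Your own diagnosis of the winding-number route is also right: degree $0$ for $\psi = A/B\colon \mathbb{T}\to\mathbb{T}$ does not force $\psi$ to hit $1$.

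What is missing is the one analytic input that monicity actually buys. On $\mathbb{T}$ one has $A/B = F/\overline{F}$ with $F(z) = \prod_j (z-a_j)/(z-b_j)$ (the unimodular prefactors cancel precisely because $A,B$ are monic). The function $G(z) := F(1/z) = \prod_j (1-a_j z)/(1-b_j z)$ is holomorphic and zero-free on a neighborhood of $\overline{\mathbb{D}}$ with $G(0)=1$, so it has a holomorphic logarithm $H$ normalized by $\IM H(0)=0$. The mean-value property for the harmonic function $\IM H$ then gives $\int_0^{2\pi}\IM H(e^{i\theta})\,d\theta = 0$, so $\IM H$ vanishes at some $e^{i\theta_0}$; at $\lambda = e^{-i\theta_0}$ one gets $F(\lambda)\in\mathbb{R}\setminus\{0\}$ and hence $A(\lambda)/B(\lambda) = F(\lambda)/\overline{F(\lambda)} = 1$. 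In your language: the continuous argument of $\psi$ is $2\,\IM H$, which has \emph{mean zero} (not merely periodicity), and that is what forces it to hit $0$. Your proposal never reaches this mean-value step, and without it the argument does not close.
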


\begin{proof} Suppose that $A$ has zeros $a_1, \ldots, a_n$ and $B$ has zeros $b_1, \ldots, b_n$. Note that for $\lambda \in \mathbb{T}$, we have $A(\lambda) = B(\lambda)$ if and only if $A(\lambda)/B(\lambda) = 1$ and this happens if and only if 
\[\prod_{j = 1}^n \left(\left(\frac{\lambda-a_j}{\lambda - b_j}\right)\big/\left(\frac{1 - \overline{a_j}\lambda}{1-\overline{b_j}\lambda}\right)\right) = 1.\] Since $\lambda \in \mathbb{T}$, this happens if and only if
\begin{equation}\label{eqn:1}
\prod_{j = 1}^n \left(\left(\frac{\lambda-a_j}{\lambda - b_j}\right)\big/\overline{\left(\frac{\lambda-a_j}{\lambda - b_j}\right)}\right) = 1. 
\end{equation}
Let $F(z):=\prod_{j=1}^n \frac{z-a_j}{z - b_j}.$ Then establishing \eqref{eqn:1} is equivalent to showing that $1 = F(\lambda)/\overline{F(\lambda)}$. Now define
$G(z):=F(1/z) = \prod_{j=1}^n \left(\frac{1/z-a_j}{1/z - b_j}\right)$ and note that $G$ has a holomorphic extension (also denoted by $G$) to a domain that includes $z = 0$, namely,
\[G(z) = \prod_{j = 1}^n \frac{1 - a_j z}{1 - b_j z}.\] Now, $a_j, b_j \in \mathbb{D}$ for all $j$, so there exists $\delta > 0$ such that $G$ is holomorphic and zero free on $|z| < 1 + \delta$. By \cite[Corollary 1.1.3]{Ransford95}, there exists a holomorphic function $H$ on $|z| < 1 + \delta$ such that 
\begin{equation}\label{eqn:2}
G = e^H.
\end{equation}
By definition, $G(0) = 1$, so $\IM H(0) = 2 \pi m$ for some $m \in \mathbb{Z}$. Subtracting $2 \pi m i$ from $H$ will not change \eqref{eqn:2} or the holomorphic nature of $H - 2 \pi m i$, so we may assume that $\IM H(0) = 0$. 
Since $\IM H$ is harmonic, the mean value theorem implies that
\[
0 = \IM H(0) = \frac{1}{2\pi} \int_{0}^{2\pi} \IM H(e^{i\theta}) d\theta.\]
Because $\IM H$ is continuous on $\mathbb{T}$, this implies that there must exist $\theta_0\in[0, 2\pi]$ with $\IM H(e^{i \theta_0}) = 0$. Let $\lambda:=e^{-i\theta_0}$. Then
\[F(\lambda) = G(e^{i \theta_0}) = e^{\RE [H(e^{i\theta_0})]} \in \mathbb{R}\setminus \{0\}.\]
Therefore, $F(\lambda)/\overline{F(\lambda)} = 1$, as needed.
\end{proof}

We can now prove Theorem \ref{HRrevised}. This proof uses a somewhat different argument than the proof in \cite{HorwitzRubel86}.

\begin{proof} By Lemma~\ref{lem:onemore}, there is a point $\lambda \in \mathbb{T}$ where $A(\lambda)=B(\lambda).$  Let $a_1, \dots, a_n$ be the zeros of $A$ counted according to multiplicity and define polynomials 
\[ p_a(z) = \prod_{j=1}^n (z-a_j)  \text{ \ \ and \ \ } q_a = \prod_{j=1}^n (1-\bar{a}_jz),\] 
so $A = p_a/q_a$. Define $p_b, q_b$ in an analogous way for $B$. Consider the polynomial $Q:=p_a q_b - p_b q_a$, which is the numerator for $A-B$ and observe that $\deg Q \le 2n.$ Moreover, a simple computation shows
\begin{equation} \label{eqn:Q}  z^{2n} \overline{Q(1/\bar{z})} =-Q(z).\end{equation}
Assume that $A$ and $B$ agree with multiplicity $k$ at $c \in \mathbb{D}$ with $c \ne 0$. Then $(z-c)^k$ divides $Q$ and so, \eqref{eqn:Q} implies that $(1-\bar{c} z)^k$ divides $Q$. Thus $Q$ has a zero of multiplicity $k$ at both $c \in \mathbb{D}$ and $1/\bar{c} \in \mathbb{C} \setminus \overline{\mathbb{D}}.$ 

The rest of the proof requires two cases. For the first case, assume each $\lambda_i \ne 0$. By our above arguments, again counting according to multiplicity,  $Q$ vanishes at $2n+1$ points in $\mathbb{C}$ and so, is identically $0$. Thus $A=B$.
For the second case, assume without loss of generality that $\lambda_1=0$ and $A, B$ agree with multiplicity $k$ at $\lambda_1$. Then $Q = z^k R$ for some polynomial $R$ and \eqref{eqn:Q} becomes
\[ z^{2n-k}\overline{R(1/\bar{z})}  =-z^kR(z).\]
This implies $\deg R \le 2n-2k$ and thus, $\deg Q \le 2n-k$. By the above arguments, $Q$ must vanish at $2n+1-k$ points in $\mathbb{C}$  and so is identically $0$. Thus $A=B$.
 \end{proof}

As pointed out in \cite{HorwitzRubel86}, the assumption that the Blaschke products are monic is essential; if  $A(z) = \frac{z - i/2}{1 - (i/2)z}$ and $B(z) = i \frac{z-1/2}{1-(1/2)z},$ then $A(0)=B(0)$ but clearly $A \ne B$.

\end{document}